\theoremstyle{plain}
\newtheorem{theorem}{Theorem}[section]
\newtheorem{proposition}[theorem]{Proposition}
\newtheorem{corollary}[theorem]{Corollary}
\newtheorem{lemma}[theorem]{Lemma}
\newtheorem{remark}[theorem]{Remark}
\theoremstyle{definition}
\newtheorem{definition}[theorem]{Definition}
\newtheorem{example}[theorem]{Example}
\newcommand{\Ext}{\operatorname{Ext}}
\newcommand{\Hom}{\operatorname{Hom}}
\newcommand{\mmod}{{\rm mod}\,}
\newcommand{\Z}{\mathbb{Z}}
\newcommand{\C}{\mathcal{C}}
\newcommand{\T}{\mathcal{T}}
\def\id{\hbox{1\hskip -3pt {\sf I}}}
\author{Charles Paquette}\address{Department of Mathematics and Computer Science, Royal Military College of Canada,
Kingston, ON K7K 7B4, Canada}
\email{charles.paquette.math@gmail.com}
\author{Emine Yildirim}
\address{Department of Mathematics and Statistics, Queen's University, Kingston, ON K7L 3N6, Canada}
\email{emineyyildirim@gmail.com}
\subjclass[2010]{18E30, 16G20}
\keywords{cluster category, cluster character, cluster-tilting, cluster algebra}
\thanks{ The first author was supported by the Natural Sciences and Engineering Research Council of Canada and by Canadian Defence Academy Research Programme. We would like to thank the anonymous referee for carefully reading the paper and the suggestions made which led to an improved version of the paper.}
\begin{document}
\title{Completions of discrete cluster categories of type $\mathbb{A}$}

\maketitle 

\begin{abstract}
    We complete the discrete cluster categories of type $\mathbb{A}$ as defined by Igusa and Todorov, by embedding such a discrete cluster category inside a larger one, and then taking a certain Verdier quotient. The resulting category is a Hom-finite Krull-Schmidt triangulated category containing the discrete cluster category as a full subcategory. The objects and Hom-spaces in this new category can be described geometrically, even though the category is not $2$-Calabi-Yau and Ext-spaces are not always symmetric. We describe all cluster-tilting subcategories. Given such a subcategory, we define a cluster character that takes values in a ring with infinitely many indeterminates. Our cluster character is new in that it takes into account infinite dimensional sub-representations of infinite dimensional ones. We show that it satisfies the multiplication formula and also the exchange formula, provided that the objects being exchanged satisfy some local Calabi-Yau conditions.
\end{abstract}

\section*{Introduction}

Discrete cluster categories of type $\mathbb{A}$ have been introduced by Igusa and Todorov in \cite{IT} as a nice class of $2$-Calabi-Yau Hom-finite triangulated categories generalizing the classical cluster categories of type $\mathbb{A}_n$ introduced by Caldero-Chapoton and Schiffler in \cite{CCS} and also by Buan, Marsh, Reineke, Reiten and Todorov in \cite{BMRRT} for general acyclic and finite quivers. Discrete cluster categories of type $\mathbb{A}$ are discrete analogues of the continuous cluster categories also introduced by Igusa and Todorov in \cite{IT,IT2}. Let $S$ be the disk and consider $M$ a discrete set of marked points on the boundary having finitely many two-sided accumulation points. To the pair $(S,M)$, Igusa and Todorov define the corresponding cluster category $\mathcal{C}_{(S,M)}$ of type $\mathbb{A}$ where indecomposable objects, up to isomorphisms, are in bijection with arcs of $(S,M)$ and the Ext-space between indecomposable objects $X,Y$ is non-zero (and one dimensional) if and only if the arcs corresponding to $X,Y$ cross. 

\medskip

In this paper, we extend any discrete cluster category $\mathcal{C}_{(S,M)}$ of type $\mathbb{A}$ to a larger Hom-finite triangulated category $\overline{\mathcal{C}}_{(S,M)}$. Although not $2$-Calabi-Yau, the category $\overline{\mathcal{C}}_{(S,M)}$ has the following combinatorial interpretation. Indecomposable objects correspond to arcs of $(S,\overline{M})$ where $\overline{M}$ is the closure of $M$. When two arcs cross, the corresponding Ext-space is one-dimensional. The converse, however, is not true in general if one of $X,Y$ corresponds to a limit arc. Our construction uses Verdier quotients and calculus of fractions. Once the categories have been introduced and studied, we give a complete description of the cluster-tilting subcategories of $\overline{\mathcal{C}}_{(S,M)}$. In particular, they correspond to the completion of the cluster-tilting subcategories as described in \cite{GHJ}, but without leapfrog configurations. Given such a cluster-tilting subcategory $\mathcal{T}$ of $\overline{\mathcal{C}}_{(S,M)}$, we can define a cluster character $X^{\mathcal{T}}$ on the objects of $\overline{\mathcal{C}}_{(S,M)}$ that takes value in a "ring" with infinitely many indeterminates. In order to define this, we need to define Grassmannian quiver varieties of infinite dimensional representations with possibly infinite dimensional subrepresentations. We show that our cluster character satisfies the multiplication formula and, for objects corresponding to ordinary (not limit) arcs, the exchange formula also holds. We give many examples and illustrations all over the sections. 

\medskip

Cluster combinatorics in the setting of Riemann surfaces with infinitely many marked points have been studied extensively in the recent years; see for instance \cite{BG, CF, HJ, LP}. In \cite{BG, CF}, accumulation points were considered as part of the set of marked points, and some infinite sequences of mutations were allowed in the cluster combinatorics. In \cite{CF}, infinite rank cluster algebras were introduced for $(S,\overline{M}, P)$ where $S$ is an oriented Riemann surface, $P$ a finite set of punctures in the interior of $S$, and $M$ is a discrete set of marked points with finitely many two-sided accumulations points, and where each boundary component has at least one point in $M$. The values of our cluster character on indecomposable objects in $\overline{\mathcal{C}}_{(S,M)}$ differ slightly from the cluster variables as defined by \c{C}anak\c{c}\i\ and Felikson for the surface $(S,\overline{M}).$  
In the paper~\cite{BG}, Baur-Gratz study in more details the cluster combinatorics of the completed infinity-gon, but where they consider two distinct one-sided accumulation points. Fisher~\cite{F} has also studied the completed infinity-gon but with a single two-sided accumulation point, where he has obtained a completion of the Igusa-Todorov discrete cluster category in that case. He also finds that Ext-spaces are not symmetric. His construction is done at the level of the discrete cluster category by adding new objects which correspond to homotopy colimits.  Finally, we would like to mention that the cluster category of this one-accumulation point case has also been recovered recently by August-Cheung-Faber-Gratz-Schroll in \cite{ACFGS} in the study of infinite-rank Grassmannians, and where the non-symmetric behavior was also noticed.

\section{The disk with marked points}

Throughout the paper, we let $S$ be a disk $D^1$ bounded by the circle $\partial S=S^1$. We fix the positive orientation to be counter-clockwise. We let $M$ denote an infinite set of points on the boundary $\partial S$ of $S$. The points in $M$ are called \emph{marked points}.

We say that a point $z$ on $\partial S$ is an \emph{accumulation point} (from $M$) if there is a sequence of pairwise distinct marked points in $M$ that converges to $z$. Convergence is with respect to the usual metric topology. An interval $[x,y]$ on $\partial S$ is the set of points in $\partial S$ from $x$ to $y$ following the counter-clockwise orientation. We will call $x$ the \emph{left side}  of the interval and $y$ its \emph{right side}. Now, the notions of convergence on the left and convergence on the right are defined naturally.

An accumulation point $z$ is called \emph{two-sided} if there are two sequences $\{m_i\}_{i \ge 1}$ and $\{m_i'\}_{i \ge 1}$ of pairwise distinct marked points that both converge to $z$, on the left and on the right, respectively. We let ${\rm acc}(M)$ denote the set of all accumulation points. An \emph{arc} is a continuous curve without self-intersections, from a point of $M$ to another point of $M$. Two arcs having the same endpoints are identified.

\begin{definition}~\label{lim-arc}
If we have a continuous curve from $M\cup{\rm acc}(M)$ to $M\cup{\rm acc}(M)$ for which one or both of its endpoints is an accumulation point, then we call this curve a \emph{limit arc}.
\end{definition} 
Again, two limit arcs having the same endpoints are identified. It is convenient to identify an arc (or a limit arc) with a set $\{a,b\}$ where $a,b$ are distinct in $M$ (or in $M\cup{\rm acc}(M)$, respectively). We say that $M$ is \emph{discrete} if for all $m\in M$ there exists an open neighborhood $N_m$ of $m$ on $\partial S$ such that $N_m\cap M=\{m\}$. We say that $M$ is \emph{closed} if it contains all the accumulation points, that is, ${\rm acc}(M) \subseteq M$. By a \emph{regular} marked point, we mean a marked point which is not an accumulation point. 

\begin{remark}(1) Note that ${\rm acc}(M)$ is non-empty if and only if $M$ is infinite.
\begin{enumerate}
\item[$(2)$] Observe that $M$ is discrete if and only if ${\rm acc}(M) \cap M = \emptyset$.
\item[$(3)$] If $M$ is discrete, then $M$ is countable. \end{enumerate}
\end{remark}

Let $m$ be a marked point on $\partial S$.
If there is another marked point $m'$ on $\partial S$ which follows $m$ in the orientation of $\partial S$ such that the open interval $(m, m')$ on $\partial S$ contains no marked point, then we let $m' = m^+$. Similarly, if there is another marked point $m''$ on $\partial S$ which follows $m$ in the opposite orientation such that the interval $(m'', m)$ on $\partial S$ contains no marked point, then we let $m'' = m^-$. Hence, we get two functions $\sigma$ and $\sigma^-$ that are partially defined on $M$ such that $\sigma(m) = m^-$ whenever $m^-$ is defined and $\sigma^-(m)=m^+$ whenever $m^+$ is defined. Note that these functions are inverses of each other on their respective domains of definition.

We will see later that the functions $\sigma$ and $\sigma^-$ will play a crucial role in the construction of our categories. Therefore, the lemma below explains why having accumulation points that are two-sided is an important property.

\begin{lemma}
The functions $\sigma, \sigma^-$ are defined on all of $M$ if and only if $M$ is discrete and all accumulation points are two-sided.
\end{lemma}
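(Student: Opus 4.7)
The plan is to establish both directions of the biconditional separately, treating the reverse direction first since it isolates the main geometric idea.

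For $(\Leftarrow)$, assume $M$ is discrete and every accumulation point of $M$ is two-sided. Fix $m \in M$; by symmetry it suffices to produce $m^+$. Discreteness of $M$ at $m$ means that the counter-clockwise angular distances from $m$ to the other marked points form a set with positive infimum $\alpha$. If $\alpha$ is attained by some $m' \in M$, then the open arc $(m,m')$ contains no marked points and $m' = m^+$. Otherwise, the point $z \in \partial S$ at angular position $\alpha$ counter-clockwise from $m$ would be an accumulation point, since there are marked points with angular distances tending down to $\alpha$ (a right-approach to $z$); but by minimality of $\alpha$, the open arc strictly between $m$ and $z$ contains no marked points, so $z$ has no left-approach. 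This contradicts two-sidedness of $z$, so $\alpha$ must be attained and $m^+$ exists. A symmetric argument gives $m^-$.

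For $(\Rightarrow)$, assume $\sigma$ and $\sigma^-$ are defined on all of $M$. Discreteness is immediate: for each $m \in M$ the open arc $(m^-, m^+) \subset \partial S$ is an open neighborhood of $m$ meeting $M$ only at $\{m\}$, hence ${\rm acc}(M) \cap M = \emptyset$. For two-sidedness, suppose for contradiction that $z \in {\rm acc}(M)$ is approached only from the left by a sequence $(m_i) \subset M$, so there exists $\epsilon > 0$ with $M \cap (z,z+\epsilon) = \emptyset$. Let $\alpha$ be the infimum of the angular positions of marked points counter-clockwise after $z$. If $\alpha$ is attained by some $m^{\star} \in M$, then $(m^\star)^- = \sigma(m^\star)$ cannot exist: any candidate $m''$ lying between $z$ and $m^\star$ is excluded since that arc is empty of marked points, while any $m''$ coming from the opposite side of the circle must have the open arc from $m''$ to $m^\star$ contain tail elements of $(m_i)$, since these accumulate at $z$ on the left. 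This contradicts totality of $\sigma$.

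The main technical difficulty is the forward direction in the sub-case where the infimum $\alpha$ above is not attained: a further accumulation point $z'$ then arises at angular position $\alpha$ after $z$, and marked points close to $z'$ may have well-defined predecessors among their accumulating neighbors, so a direct contradiction is not immediate. To push through I would exploit the global orbit structure of $\sigma^{\pm}$ together with two-sidedness at every accumulation point: propagating around $\partial S$ via $\sigma^-$-orbits should eventually uncover an element whose predecessor or successor is undefined, yielding the required contradiction. This global propagation step, which turns the local one-sidedness of $z$ into a concrete obstruction somewhere in $M$, is the main obstacle I anticipate in the proof.
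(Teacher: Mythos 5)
The paper states this lemma without proof, so I can only judge your argument on its own merits. Your $(\Leftarrow)$ direction is correct and complete, as is the discreteness half of $(\Rightarrow)$ and your Case A (modulo the one-line remark that the candidate $m''=z$ is ruled out because $z\in{\rm acc}(M)$ and you have already shown ${\rm acc}(M)\cap M=\emptyset$). But the gap you flag in the remaining sub-case is not a technical difficulty to be pushed through: it is fatal, because the forward implication is actually false there. Parametrize $\partial S$ by $[0,2\pi)$ with the counter-clockwise orientation and take
$$M=\left\{1-\tfrac{1}{n} \;:\; n\ge 1\right\}\cup\left\{2+\tfrac{1}{n} \;:\; n\ge 1\right\}.$$
Then $M$ is discrete and ${\rm acc}(M)=\{1,2\}$, with $1$ approached only on the left and $2$ only on the right (the arc $(1,2)$ contains no marked points), so neither accumulation point is two-sided. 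Yet $\sigma^-$ is everywhere defined --- it sends $1-\tfrac{1}{n}\mapsto 1-\tfrac{1}{n+1}$, sends $2+\tfrac{1}{n}\mapsto 2+\tfrac{1}{n-1}$ for $n\ge 2$, and sends $3\mapsto 0$ --- and $\sigma$ is its everywhere-defined inverse. This is exactly your unresolved configuration: the infimum $\alpha$ of angular positions after $z=1$ is not attained, a second one-sided accumulation point $z'=2$ sits at position $\alpha$, and every marked point keeps both neighbours because the two one-sided accumulation points face each other across an empty interval. No propagation along $\sigma^{\pm}$-orbits can therefore uncover a point with a missing neighbour.

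So the honest outcome is: your proof establishes $(\Leftarrow)$ in full, and establishes $(\Rightarrow)$ only when the relevant infimum is attained --- for instance whenever $M$ has a unique accumulation point, since then no second accumulation point $z'$ can arise and your Case A always applies. In general the $(\Rightarrow)$ direction as stated is false, so the missing step cannot be supplied; the statement itself needs to be weakened (note that the rest of the paper only ever uses discreteness and two-sidedness as hypotheses, for which your $(\Leftarrow)$ argument is all that is required).
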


We also remark that $\sigma$ has no fixed points.

\section{Discrete cluster categories of type $\mathbb{A}$}~\label{secondsection} In this section, we recollect the results we need about the so-called discrete cluster categories of type $\mathbb{A}$, as defined by Igusa and Todorov in \cite{IT} (see also \cite{GHJ}). We use the notations and setting of the previous section.
Assume that $\sigma, \sigma^-$ are defined everywhere and are bijective. Therefore, $M$ is discrete and all accumulation points are two-sided. We fix a field $k$, that we assume to be algebraically closed, for simplicity. The corresponding cluster category $\mathcal{C}_{(S,M)}$ is a Hom-finite 2-Calabi-Yau Krull-Schmidt triangulated $k$-category with the following features. In what follows, $[1]$ denotes the suspension functor.
\begin{itemize}
    \item Indecomposable objects, up to isomorphism, are in bijection with the set of all arcs of $(S,M)$. We write $\ell_X$ for the arc corresponding to an indecomposable object $X\in \mathcal{C}_{(S,M)}$.
    \item For $X$ indecomposable with $\ell_X=\{a, b\}$, we have $\ell_{X[1]} = \{\sigma(a), \sigma(b)\}$.
    \item Let $X, Y$ be indecomposable objects. Then $$\rm {Hom}_{\mathcal{C}_{(S,M)}}(X, Y[1]) =  \begin{cases}
    k       & \quad \text{if } \ell_X,\ell_Y \text{ cross}\\
    0       & \quad \text{otherwise} \end{cases}$$
\item Let $\ell_A =\{a_1, a_2\},\ \ell_B=\{a_2, a_3\},\ \ell_C=\{a_3, a_4\},\ \ell_D=\{a_4, a_1\}$ be arcs or boundary segments that are sides of a quadrilateral where $a_1, a_2, a_3, a_4$ are oriented following the orientation of $S$, see Figure~\ref{ex-triangle}. Let $\ell_X=\{a_1, a_3\}$ and $\ell_Y=\{a_2, a_4\}$. Then we have the following non-split exact triangles:
$$X \to B \oplus D \to Y \to X[1]$$
$$Y \to A \oplus C \to X \to Y[1]$$
\end{itemize}
with convention that a boundary segment is identified with a zero object. Therefore, a middle term in one of the above exact triangles might be indecomposable or even trivial.

\begin{figure}[H]
\includegraphics[scale=0.56]{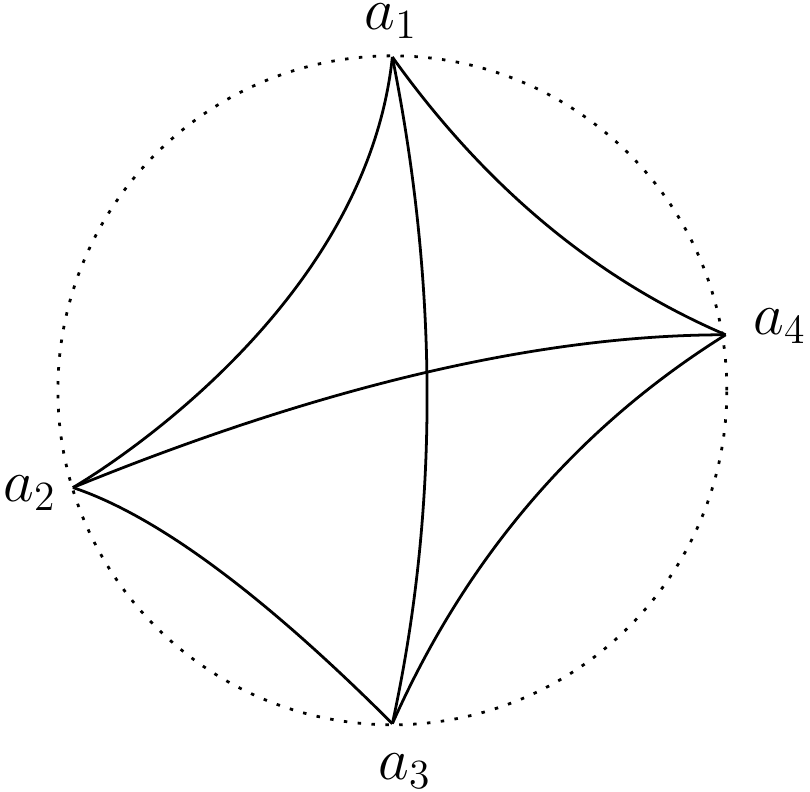}
\caption{Arcs in exact triangles}
\label{ex-triangle}
\end{figure}

\begin{section}{Completion of discrete cluster categories}

We fix $S$ to be the disk and let $M$ be discrete such that the points in $M$ gives rise to finitely many two-sided accumulation points on $\partial S$. We extend the surface $S$ to a new surface $S'$ by replacing each accumulation point with  new marked points ordered as the line of integers. More precisely, we replace each $z_i\in {\rm acc(M)}$ with an interval $[z_i^-,z_i^+]$ containing the points $z_{ij}\in (z_i^-,z_i^+)$ for $j \in \Z$ such that $z_{ij}<z_{ij'}$ if and only if $j<j'$ and $\displaystyle \lim_{j\to \infty} z_{ij}=z_i^+$ and $\displaystyle \lim_{j\to -\infty} z_{ij}=z_i^-$. We set $M' = M \cup \{z_{ij} \mid z_i \in {\rm acc(M)}, j \in \mathbb{Z}\}$. The following is evident.

\begin{proposition} The set $M'$ is discrete in $S'$ and gives rise to finitely many two-sided accumulation points on $\partial S'$. Therefore, we have a discrete cluster category $\mathcal{C}_{(S',M')}$ of type $\mathbb{A}$.
\end{proposition}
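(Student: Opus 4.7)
The plan is to verify directly from the construction of $(S',M')$ both the discreteness of $M'$ in $\partial S'$ and the classification of its accumulation points. I would split $M'$ into two kinds of points, namely the original marked points $m\in M$ and the newly inserted points $z_{ij}$, and handle each separately.

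For discreteness, the case of a newly inserted point $z_{ij}$ is immediate: by the ordering condition $z_{ij}<z_{ij'}$ if and only if $j<j'$, the open interval $(z_{i,j-1},z_{i,j+1})$ on $\partial S'$ is an open neighbourhood of $z_{ij}$ whose intersection with $M'$ is $\{z_{ij}\}$. For an original point $m\in M$, discreteness of $M$ provides an open neighbourhood $N_m\subseteq \partial S$ with $N_m\cap M=\{m\}$. Since ${\rm acc}(M)$ is finite and $m\notin {\rm acc}(M)$ (as $M$ is discrete), we may shrink $N_m$ so that it avoids each of the finitely many accumulation points $z_i$; the corresponding open arc $N_m'$ in $\partial S'$ then misses every replacement interval $[z_i^-,z_i^+]$, and in particular misses every $z_{ij}$, so $N_m'\cap M'=\{m\}$.

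For the accumulation points I would show that ${\rm acc}(M')=\{z_i^-, z_i^+ \mid z_i \in {\rm acc}(M)\}$. The points $z_i^\pm$ are accumulation points of $M'$ because by construction $z_{ij}\to z_i^+$ as $j\to\infty$ from the left and $z_{ij}\to z_i^-$ as $j\to -\infty$ from the right. Two-sidedness then follows by combining these inserted sequences with the original ones: since $z_i$ is a two-sided accumulation point of $M$, there are sequences in $M$ converging to $z_i$ from the left and from the right in $\partial S$; after the replacement, the left-converging one now converges to $z_i^-$ from the left in $\partial S'$ (pairing with the $z_{ij}$'s approaching $z_i^-$ from the right), and symmetrically the right-converging one pairs with the inserted $z_{ij}$'s approaching $z_i^+$ from the left. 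To see there are no further accumulation points I would observe that any $p\in\partial S'\setminus\bigcup_i[z_i^-,z_i^+]$ corresponds to a point of $\partial S$ outside ${\rm acc}(M)$, which therefore admits an isolating neighbourhood in $\partial S$; shrinking to avoid the finitely many replacement intervals gives an isolating neighbourhood of $p$ in $\partial S'$. A point of $(z_i^-,z_i^+)$ other than some $z_{ij}$ sits between two consecutive inserted points and is isolated from $M'$ by the interval between them. Finiteness is then automatic: at most $2|{\rm acc}(M)|$ accumulation points.

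I don't anticipate a real obstacle here; the statement is essentially bookkeeping, which is presumably why it is labelled evident in the paper. The only point requiring a moment of care is the two-sidedness of each $z_i^\pm$: one must notice that the original two-sided character of $z_i$ supplies the ``outward'' approaching sequence (from $M$) while the newly inserted integers $z_{ij}$ supply the ``inward'' one, and that these land on opposite sides of $z_i^-$ and $z_i^+$ as required.
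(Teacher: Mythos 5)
Your verification is correct and is exactly the bookkeeping the paper has in mind when it declares the proposition evident (no proof is given in the paper): discreteness is checked pointwise on the two kinds of marked points, and ${\rm acc}(M')=\{z_i^{\pm}\}$ with two-sidedness supplied by pairing the inserted sequence $z_{ij}$ on the inside with the original sequences from $M$ on the outside. Nothing is missing.
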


Let $\mathcal{D}$ be the full additive subcategory of $\mathcal{C}_{(S',M')}$ generated by the indecomposable objects corresponding to arcs having both their endpoints in the same interval $(z_i^-, z_i^+)$ for some $i$, and let $\mathcal{D}'$ be the full additive subcategory of $\mathcal{C}_{(S',M')}$ generated by the indecomposable objects corresponding to arcs having none of their endpoints in an interval of the form $(z_i^-, z_i^+)$. It is easy to see that if $X$ lies in $\mathcal{D}$ (or in $\mathcal{D}'$), then so does $X[1]$ and $X[-1]$.

Let us recall the definition of perpendicular subcategories. Let $\mathcal{C}$ be a category and $\mathcal{S}$ be a subcategory. We define the following full additive subcategories:

\[\mathcal{S}^{\perp}=\{X\in \mathcal{C}\mid \mathcal{C}(Y,X)=0 \text{ for all } Y\in \mathcal{S}\}\]
\[^{\perp}\mathcal{S}=\{X\in \mathcal{C}\mid \mathcal{C}(X,Y)=0 \text{ for all } Y\in \mathcal{S}\}\]

We have the following which can be easily checked.

\begin{lemma} Let $X$ be an indecomposable object in $\mathcal{C}_{(S',M')}$. Then $X\in \mathcal{D}$ if and only if $Hom_{\mathcal{C}_{(S',M')}}(X,A)=0$ for every $A\in \mathcal{D}'$. Moreover, $\mathcal{D}'=\mathcal{D}^{\perp}= ^{\perp}\mathcal{D}$.
\end{lemma}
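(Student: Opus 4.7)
The plan is to translate Hom-vanishing into a geometric non-crossing condition using the $2$-Calabi-Yau property of $\mathcal{C}_{(S',M')}$. Combining the $2$-CY isomorphism $\Hom(X,A)\cong D\Hom(A,X[2])$ with the identification of $\Ext^{1}$-spaces with arc crossings recalled earlier, one obtains that $\Hom(X,A)\ne 0$ precisely when $\ell_{A}$ crosses $\ell_{X[1]}$, and symmetrically that $\Hom(A,X)\ne 0$ precisely when $\ell_{X}$ crosses $\ell_{A[1]}$. Since $\sigma$ and $\sigma^{-}$ preserve each subset $M'\cap(z_{i}^{-},z_{i}^{+})$ as well as the complement $M'\setminus\bigcup_{i}(z_{i}^{-},z_{i}^{+})$, both $\mathcal{D}$ and $\mathcal{D}'$ are stable under $[1]$ and $[-1]$; this observation, already noted in the paragraph preceding the lemma, will be used freely to pass between an object and its suspension.

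For the ``only if'' direction of the biconditional, together with the inclusions $\mathcal{D}'\subseteq\mathcal{D}^{\perp}$ and $\mathcal{D}'\subseteq{}^{\perp}\mathcal{D}$, I would invoke the following elementary geometric fact: if an arc has both endpoints outside every open interval $(z_{i}^{-},z_{i}^{+})$, then each such interval lies entirely on one of the two boundary arcs it cuts out of $\partial S'$, so any arc with both endpoints inside a single $(z_{i}^{-},z_{i}^{+})$ can be drawn on the corresponding side and hence does not cross the first one. Applied to $(\ell_{A},\ell_{X[1]})$ with $X\in\mathcal{D}$ and $A\in\mathcal{D}'$ this gives $\Hom(X,A)=0$, and applied to $(\ell_{A[1]},\ell_{X})$ it gives $\Hom(A,X)=0$.

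For the reverse inclusions I would argue by contrapositive. Assume $X$ is indecomposable with $X\notin\mathcal{D}$; I must exhibit $A\in\mathcal{D}'$ whose arc crosses $\ell_{X[1]}$. If $X\in\mathcal{D}'$, take $A=X$. Otherwise $\ell_{X[1]}$ has exactly one endpoint inside some $(z_{i}^{-},z_{i}^{+})$ and the other either in another $(z_{j}^{-},z_{j}^{+})$ or in $M\setminus\bigcup_{i}(z_{i}^{-},z_{i}^{+})$. In either case the two endpoints of $\ell_{X[1]}$ divide $\partial S'$ into two boundary arcs, each of which contains regular marked points from $M$, because the two-sided accumulation at every $z_{i}$ in $M$ forces $M$-points on the exterior side of each $z_{i}^{\pm}$. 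Picking one regular marked point on each side produces an arc $\ell_{A}$ (not a boundary segment) with both endpoints in $M$, hence $A\in\mathcal{D}'$, that crosses $\ell_{X[1]}$. The mirror construction, swapping the roles of $X$ and $A$ and using $\ell_{A[1]}$ in place of $\ell_{X[1]}$, yields ${}^{\perp}\mathcal{D}\subseteq\mathcal{D}'$ and simultaneously the ``if'' direction of the biconditional.

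The main obstacle is this last step, namely verifying in every configuration of the endpoints of $\ell_{X[1]}$ that both boundary arcs it determines on $\partial S'$ actually meet $M$ and that the chosen pair is not a boundary segment; this is precisely where the two-sidedness hypothesis on the accumulation points is indispensable, as a one-sided accumulation would allow a side of $\ell_{X[1]}$ to contain only points of the form $z_{ij}$.
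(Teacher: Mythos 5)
The paper states this lemma without proof (``can be easily checked''), and your argument is correct and is surely the intended one: convert Hom-vanishing into non-crossing of shifted arcs via the $2$-Calabi--Yau property, note that each added interval $(z_i^-,z_i^+)$ lies entirely on one side of any arc with both endpoints in $M$, and use the two-sidedness of each accumulation point to find $M$-points on both sides of any arc not in $\mathcal{D}$ (your worry about boundary segments is harmless, since two points separated by $\ell_{X[1]}$ always have a marked point strictly between them on either side). The one place you are loose is the final ``mirror construction'': the inclusions ${}^\perp\mathcal{D},\ \mathcal{D}^\perp\subseteq\mathcal{D}'$ need the (even easier) dual step of producing a $\mathcal{D}$-arc crossing a given arc with an endpoint $z_{ij}$ --- e.g.\ $\{z_{i,j-1},z_{i,k}\}$ for suitable $k$ --- rather than a literal mirror of your $M$-point argument, but this is routine and does not affect correctness.
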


In the sequel, when it is clear which category we are working with, we will tend to omit the subscript in the $\Hom$ notation. It will simplify the notations. By a \emph{thick} subcategory of a triangulated category, we mean a triangulated subcategory that is closed under taking direct summands.

\begin{proposition}
The categories $\mathcal{D}$ and $\mathcal{D}'$ are thick subcategories of $\mathcal{C}_{(S',M')}$.
\end{proposition}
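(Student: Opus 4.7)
The plan is to leverage the preceding lemma, which already realizes $\mathcal{D}$ and $\mathcal{D}'$ as perpendicular subcategories of $\mathcal{C}_{(S',M')}$. Once the indecomposable characterization stated in the lemma is upgraded to arbitrary objects by Krull-Schmidt plus bilinearity of $\Hom$, the first assertion reads $\mathcal{D}={}^\perp\mathcal{D}'$, and the second asserts $\mathcal{D}'=\mathcal{D}^\perp={}^\perp\mathcal{D}$. My strategy is then to invoke the general fact that for any class $\mathcal{S}$ of objects in a triangulated category which is stable under $[1]$ and $[-1]$, both $\mathcal{S}^\perp$ and ${}^\perp\mathcal{S}$ are thick triangulated subcategories. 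Both $\mathcal{D}$ and $\mathcal{D}'$ are observed just before the lemma to be stable under suspension, and concretely this follows directly from the shift formula $\ell_{X[1]}=\{\sigma(a),\sigma(b)\}$ recalled in Section~\ref{secondsection}: the partial successor $\sigma$ permutes $M'\cap (z_i^-,z_i^+)$ by merely incrementing the integer index $j$, and therefore sends arcs with both endpoints inside some fixed interval $(z_i^-,z_i^+)$ to arcs of the same type, and analogously for arcs with no endpoint in such an interval.

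The remaining step is the routine long-exact-sequence chase. Given a distinguished triangle $Y_1\to Y_2\to Y_3\to Y_1[1]$ with two of the three objects lying in $\mathcal{D}^\perp$, I apply $\Hom(X,-)$ for $X\in\mathcal{D}$; the resulting long exact sequence, combined with the shift-stability of $\mathcal{D}^\perp$ (inherited from that of $\mathcal{D}$), forces the third term to be annihilated as well. A mirror-image argument using $\Hom(-,A)$ for $A\in\mathcal{D}'$ establishes thickness of $\mathcal{D}={}^\perp\mathcal{D}'$. Closure under direct summands comes for free: both subcategories are the additive closures of prescribed sets of indecomposables in a Krull-Schmidt category, and equivalently a vanishing-of-$\Hom$ condition is automatically inherited by summands.

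I do not anticipate a genuine obstacle, since essentially all the substantive combinatorial content has been absorbed into the preceding lemma. The one step that deserves explicit care is the passage from the indecomposable characterization $X\in\mathcal{D}\iff \Hom(X,A)=0$ for all $A\in\mathcal{D}'$ to the statement that $\mathcal{D}={}^\perp\mathcal{D}'$ as subcategories; this is standard in a Krull-Schmidt $\Hom$-finite category, but is worth recording before feeding the identification into the general perpendicular formalism.
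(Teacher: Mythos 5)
Your proposal is correct and follows essentially the same route as the paper: both arguments reduce thickness to the perpendicular characterizations from the preceding lemma, combined with shift-stability of $\mathcal{D}$ and $\mathcal{D}'$, additivity of $\Hom$ for closure under summands, and the long exact sequence for closure under cones. The paper's version is merely more terse, leaving the long-exact-sequence chase implicit.
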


\begin{proof}
Observe that for $X$ indecomposable, $X$ lies in $\mathcal{D}$ if and only if ${\rm Hom}(X,D'[1])=0$ for all $D' \in \mathcal{D}'$, and that $X$ lies in $\mathcal{D}'$ if and only if ${\rm Hom}(X,D[1])=0$ for all $D \in \mathcal{D}$. Since the ${\rm Hom}$-functor is additive, $\mathcal{D}$ and $\mathcal{D}'$ are closed under direct summands. We already know that $\mathcal{D}$ and $\mathcal{D}'$ are closed under shifts and inverse shifts. The observation we just had yields that both $\mathcal{D}$ and $\mathcal{D}'$ are closed under taking the cone of a morphism, which proves the statement.
\end{proof}

We will recall some preliminary results about localization of triangulated categories. We will follow \cite{K}. Let $\mathcal{A}$ be a triangulated category and $\Sigma$ be a set of morphisms in $\mathcal{T}$ which is a multiplicative system, i.e. $\Sigma$ admits a calculus of left and right fractions. One can construct a category $\mathcal{A}[\Sigma^{-1}]$, called the \emph{quotient category} of $\mathcal{A}$ by $\Sigma$, such that the morphisms in $\Sigma$ are formally inverted. Let us recall the main ingredients of this construction. First, the objects of $\mathcal{A}[\Sigma^{-1}]$ are the same as the objects of $\mathcal{A}$. For $X$, $Y\in \mathcal{A}$, the pair $(f,g)$ of morphisms $X\xrightarrow{f} Y'\xleftarrow{g} Y$ with $g\in\Sigma$ is called a \emph{left fraction}. Two left fractions $(f_1, g_1) : X\xrightarrow{f_1} Y_1\xleftarrow{g_1} Y$ and $(f_2, g_2): X\xrightarrow{f_2} Y_2\xleftarrow{g_2} Y$ are \emph{equivalent} if there exist an object $Y_3$ and morphisms shown in the commutative diagram
$$\xymatrix{& Y_1 \ar[d] & \\ X \ar[ur]^{f_1} \ar[dr]_{f_2} \ar[r]^{f_3} & Y_3 & Y \ar[ul]_{g_1} \ar[dl]^{g_2} \ar[l]_{g_3} \\ & Y_2 \ar[u] &}$$
with $g_3 \in \Sigma$. As the name suggests, the relation of being equivalent for left fractions is an equivalence relation. A morphism from $X$ to $Y$ in $\mathcal{A}[\Sigma^{-1}]$ is an equivalence class of left fractions. Now, for $(f_1, g_1) : X\xrightarrow{f_1} Y'\xleftarrow{g_1} Y$ and $(f_2, g_2): Y\xrightarrow{f_2} Z'\xleftarrow{g_2} Z$, the composition $(f_2, g_2)\circ(f_1, g_1)$ is obtained by the following diagram
$$\xymatrix{& & W & & \\
& Y' \ar[ur]^u & & Z' \ar[ul]_v &\\
X \ar[ur]^{f_1} && Y \ar[ul]_{g_1} \ar[ur]^{f_2} && Z \ar[ul]_{g_2}}$$
where morphisms $u,v$ are obtained by using the axioms of the multiplicative systems. In other words, the composition is the (well-defined) left fraction $X\xrightarrow{uf_1} W\xleftarrow{vg_2} Z$. When $\mathcal{A}$ is a $k$-category, the quotient category $\mathcal{A}[\Sigma^{-1}]$ is also a $k$-category.
When the multiplicative system $\Sigma$ is compatible with the triangulated structure of $\mathcal{A}$, the quotient category $\mathcal{A}[\Sigma^{-1}]$ also has a unique triangulated structure and the quotient functor $\pi: \mathcal{A} \to \mathcal{A}[\Sigma^{-1}]$ such that for $f: X \to Y$, we have $\pi(f) = (f,\id_Y)$ is an exact functor of triangulated categories~\cite[Lemma 4.3.1]{K}.

In our setting, we declare that a morphism $f: X \to Y$ of $\mathcal{C}_{(S',M')}$ lies in $\Sigma$ if the exact triangle $X \stackrel{f}{\to} Y \to Z \to X[1]$ is such that $Z \in \mathcal{D}$. It follows from Lemma 4.6.1 in \cite{K} that $\Sigma$ is a multiplicative system compatible with the triangulation of $\mathcal{C}_{(S',M')}$. The category $\mathcal{C}_{(S',M')}[\Sigma^{-1}]$ is simply denoted $\overline{\mathcal{C}}_{(S,M)}$. Therefore, we have the following.

\begin{proposition} The quotient category $\overline{\mathcal{C}}_{(S,M)}$ is a triangulated $k$-category and the quotient functor $\pi : \mathcal{C}_{(S',M')} \to \overline{\mathcal{C}}_{(S,M)}$ is an exact functor of triangulated categories.
\end{proposition}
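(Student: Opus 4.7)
The plan is to obtain the statement as a direct application of the general Verdier localization machinery recalled in the excerpt, the input being the earlier proposition that $\mathcal{D}$ is a thick triangulated subcategory of $\mathcal{C}_{(S',M')}$. Concretely, I would verify the hypotheses of \cite[Lemma 4.3.1]{K} for the pair $(\mathcal{C}_{(S',M')}, \Sigma)$ and read off the conclusion.

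First I would invoke \cite[Lemma 4.6.1]{K}: for any thick triangulated subcategory of a triangulated category, the class of morphisms whose cone lies in that subcategory is a multiplicative system compatible with the triangulation. Applying this to the thick subcategory $\mathcal{D} \subseteq \mathcal{C}_{(S',M')}$ (which is the preceding proposition) shows that the $\Sigma$ defined in the paragraph above the statement is indeed a saturated multiplicative system compatible with triangles. This step also yields that $\Sigma$ is stable under the suspension $[1]$ and its inverse $[-1]$, since $\mathcal{D}$ is so.

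Second, I would construct the quotient as recalled in the excerpt: objects are the objects of $\mathcal{C}_{(S',M')}$, morphisms are equivalence classes of left fractions $X \xrightarrow{f} Y' \xleftarrow{s} Y$ with $s \in \Sigma$, and composition is defined via the multiplicative-system axioms. The $k$-linearity of $\overline{\mathcal{C}}_{(S,M)}$ follows because $\Sigma$ is additive (the cone of a sum of two parallel morphisms in $\Sigma$ still lies in the triangulated subcategory $\mathcal{D}$), so Hom-sets in the quotient inherit a $k$-vector space structure and composition remains $k$-bilinear. The quotient functor $\pi$, sending $f : X \to Y$ to the fraction $(f, \id_Y)$, is then automatically $k$-linear.

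Finally, I would appeal to \cite[Lemma 4.3.1]{K} to transport the triangulated structure: the suspension on $\overline{\mathcal{C}}_{(S,M)}$ is induced by $[1]$ (well-defined because $\Sigma$ is stable under $[1]$), and a triangle in $\overline{\mathcal{C}}_{(S,M)}$ is declared distinguished precisely when it is isomorphic to the image under $\pi$ of a distinguished triangle of $\mathcal{C}_{(S',M')}$. The cited lemma then gives uniqueness of this triangulated structure and the fact that $\pi$ is exact, which is exactly the statement. The only genuinely delicate verification — the octahedral-type compatibility between $\Sigma$ and the triangulation — is precisely the content of \cite[Lemma 4.6.1]{K} and is thus entirely outsourced; all remaining work is bookkeeping with fractions.
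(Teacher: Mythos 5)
Your proposal is correct and follows essentially the same route as the paper: the paper also deduces from the thickness of $\mathcal{D}$ (via Lemma 4.6.1 of \cite{K}) that $\Sigma$ is a multiplicative system compatible with the triangulation, and then cites Lemma 4.3.1 of \cite{K} for the induced triangulated structure, the $k$-linearity, and the exactness of $\pi$. No gaps.
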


We start with the following easy observation.

\begin{lemma} \label{lemma_iso_fraction}
Let $(f,g): X\xrightarrow{f} Y'\xleftarrow{g} Y$ be an isomorphism in $\overline{\mathcal{C}}_{(S,M)}$. Then $f \in \Sigma$.
\end{lemma}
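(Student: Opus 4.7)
The plan is to reduce the statement to the standard principle that, in the Verdier quotient, a morphism of $\mathcal{C}_{(S',M')}$ becomes invertible exactly when its cone lies in $\mathcal{D}$; once I show that $\pi(f)$ itself is an isomorphism, the membership $f\in\Sigma$ will follow by completing $f$ to a triangle and applying $\pi$.

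First, I would use that $g\in\Sigma$ holds automatically by the very definition of a left fraction, so $\pi(g)$ is invertible in $\overline{\mathcal{C}}_{(S,M)}$. Under the calculus of left fractions, the class $(f,g)$ is precisely the composite $\pi(g)^{-1}\circ\pi(f)$. Composing the hypothetical isomorphism $(f,g)$ on the left with the isomorphism $\pi(g)$ therefore cancels the $\pi(g)^{-1}$ factor and exhibits $\pi(f)$ as an isomorphism in $\overline{\mathcal{C}}_{(S,M)}$.

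Next, I would complete $f$ to a distinguished triangle $X\xrightarrow{f} Y'\to Z\to X[1]$ in $\mathcal{C}_{(S',M')}$ and push it through the exact functor $\pi$. Since $\pi(f)$ is an isomorphism in the triangulated category $\overline{\mathcal{C}}_{(S,M)}$, the resulting triangle forces $\pi(Z)\cong 0$.

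The last step, which I expect to be the only point where one actually has to work, is to deduce $Z\in\mathcal{D}$ from $\pi(Z)=0$. Unpacking the equivalence of left fractions applied to the equality $[(\id_Z,\id_Z)] = [(0,\id_Z)]$ in $\overline{\mathcal{C}}_{(S,M)}(Z,Z)$ yields an object $Y_3$ and a morphism $g_3\colon Z\to Y_3$ with $g_3\in\Sigma$ and $g_3=0$. The cone of the zero morphism $0\colon Z\to Y_3$ is $Y_3\oplus Z[1]$, and it lies in $\mathcal{D}$ because $g_3\in\Sigma$; the thickness of $\mathcal{D}$ (established in the preceding proposition) then gives $Z[1]\in\mathcal{D}$, and hence $Z\in\mathcal{D}$, since $\mathcal{D}$ is closed under inverse shifts. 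Thus the cone of $f$ belongs to $\mathcal{D}$, which is exactly the condition $f\in\Sigma$. Everything else is formal bookkeeping with the axioms of the calculus of fractions.
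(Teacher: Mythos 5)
Your proof is correct and follows essentially the same route as the paper: reduce to showing $\pi(f)$ is an isomorphism by cancelling the invertible $\pi(g)$, then conclude $\mathrm{cone}(f)\in\mathcal{D}$ from $\pi(\mathrm{cone}(f))=0$. The only difference is that the paper simply cites the fact that the kernel of $\pi$ equals $\mathcal{D}$ (since $\mathcal{D}$ is thick, via \cite[Proposition 4.6.2]{K}), whereas you prove it directly by unpacking the equivalence $[(\id_Z,\id_Z)]=[(0,\id_Z)]$ to produce a zero morphism in $\Sigma$ out of $Z$ — a valid and correctly executed argument.
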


\begin{proof}
Observe that $(f,g)$ is the composition of $(f,1_{Y'})$ with $(1_{Y'},g)$. Since the latter is an isomorphism, we get that $(f,1_{Y'})$ is an isomorphism. Equivalently, $\pi(f)$ is an isomorphism. However, as $\pi$ is an exact functor, that means $\pi({\rm cone}(f))=0$. Since $\mathcal{D}$ is a thick subcategory, the kernel of $\pi$ is exactly $\mathcal{D}$ and therefore, $f \in \Sigma$.
\end{proof}

To simplify the notations, we simply write $\mathcal{C}$ for $\mathcal{C}_{(S',M')}$ and $\overline{\mathcal{C}}$ for $\overline{\mathcal{C}}_{(S,M)}$. To characterize the indecomposable objects in $\overline{\mathcal{C}}$, we now define an equivalence relation on the set of isoclasses of indecomposable objects in $\mathcal{C}$, or equivalently, on the set of arcs in $\mathcal{C}$ as follows. We say that two arcs $a,b$ are \emph{similar}, written as $a \sim b$, if they become equal when we contract all intervals $[z_i^-, z_i^+]$ back to $z_i$. Arcs that are reduced to one point through this process (this point has to be some accumulation point $z_i$) are precisely the arcs corresponding to indecomposable objects in $\mathcal{D}$. They will be called $\mathcal{D}$-\emph{contractible}. Note that for an indecomposable $D \in \mathcal{D}$, we have $\pi(D)=0$ while $\ell_D$ is $\mathcal{D}$-contractible.

\begin{lemma} \label{lemmatechnical}Let $f: X \to Y$ be a non-zero morphism in $\mathcal{C}$ between indecomposable objects whose arcs are similar. Complete to an exact triangle
\[C \to X \to Y \to C[1]\]
Then either $C \in \mathcal{D}$ or $C$ is a direct sum of objects whose arcs are similar to that of $X$ (and $Y$). In the first case, $f \in \Sigma$, and in the second case, there exists a non-zero morphism $g: Y \to X$ in $\Sigma$. In particular, $X,Y$ are isomorphic in $\overline{\mathcal{C}}$.
\end{lemma}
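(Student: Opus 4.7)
The plan is to realize $C\to X\to Y\to C[1]$ as a shift and rotation of an exchange triangle from Section~\ref{secondsection}, attached to the quadrilateral spanned by the crossing arcs $\ell_Y$ and $\ell_{X[1]}$. Write $\ell_X=\{p_1,p_2\}$ and $\ell_Y=\{q_1,q_2\}$ with $q_i\sim p_i$. The sub-case $\ell_X=\ell_Y$ gives $X\cong Y$ and $C=0\in\mathcal{D}$; assume therefore $\ell_X\neq\ell_Y$. The $2$-Calabi-Yau property yields $\Hom(X,Y)\cong D\Hom(Y,X[1])$, so the non-vanishing of $f$ forces $\ell_Y$ and $\ell_{X[1]}=\{\sigma(p_1),\sigma(p_2)\}$ to cross. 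Since accumulation intervals are $\sigma$-stable, $\sigma(p_i)\sim q_i$ as well, so the four vertices of this quadrilateral pair up as $\{\sigma(p_i),q_i\}$: either both endpoints of such a pair lie in a common accumulation interval, or $p_i$ is a regular point and the pair consists of the two adjacent marked points $\sigma(p_i),p_i$.

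A case analysis on the counter-clockwise order of the four vertices shows that only two configurations can produce crossings (mixed configurations give nested arcs): either each $q_i$ is counter-clockwise of $\sigma(p_i)$ inside its interval, or each is clockwise. In either configuration the four sides of the quadrilateral split into two opposite pairs: one pair joining endpoints inside a common accumulation interval (hence $\mathcal{D}$-contractible arcs or boundary segments, lying in $\mathcal{D}$), the other joining endpoints in distinct intervals. Applying the corresponding exchange triangle of Section~\ref{secondsection}, shifting by $[-1]$ and rotating, produces the desired triangle $C\to X\to Y\to C[1]$ with $C$ equal to a $[-1]$-shift of the direct sum of one of these pairs. In the counter-clockwise configuration this pair is the same-interval one, giving $C\in\mathcal{D}$. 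In the clockwise configuration the pair is the cross-interval one, and each such side has its $[-1]$-shift an arc similar to $\ell_X$ (by $\sigma$-stability of the intervals), so $C$ is a direct sum of indecomposables with arcs similar to that of $X$. This realises the lemma's dichotomy.

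In the first case, $C\in\mathcal{D}$ gives $f\in\Sigma$ by definition. In the second case, the same $2$-Calabi-Yau duality applied to $\Hom(Y,X)\cong D\Hom(X,Y[1])$ provides a crossing of $\ell_X$ and $\ell_{Y[1]}$; because $\sigma(q_i)$ is strictly clockwise of $p_i$, the new quadrilateral now lands in the counter-clockwise configuration, and the parallel construction produces a non-zero $g\colon Y\to X$ whose cone is assembled out of same-interval sides and so lies in $\mathcal{D}$, yielding $g\in\Sigma$. The main obstacle is the combinatorial bookkeeping that identifies which opposite pair of sides feeds into the exchange triangle selecting $f$ (or $g$) in each configuration, together with the degenerate sub-cases in which some $p_i$ is regular and two vertices collide so that a side becomes a boundary segment identified with zero; these only occur in the counter-clockwise configuration and do not affect the conclusion.
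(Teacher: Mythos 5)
Your argument is correct and follows essentially the same route as the paper: both identify the cone of $f$ with a shift of the middle term of the exchange triangle attached to the quadrilateral spanned by the two crossing (shifted) similar arcs, and split into the same two configurations according to the relative rotational position of the endpoints inside their accumulation intervals, constructing the reverse morphism $g\in\Sigma$ in the second configuration. (One minor slip: with Serre functor $[2]$, duality gives $\Hom(X,Y)\cong D\Hom(Y,X[2])$ rather than $D\Hom(Y,X[1])$, but the crossing statements you deduce from it are the correct ones.)
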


\begin{proof}
We have $\ell_X = \{a, b\}$ and $\ell_Y=\{a', b'\}$ where $a=a'$ or $a,a'$ belong to the same added interval; and $b=b'$ or $b,b'$ belong to the same added interval.  Consider an exact triangle
$$Y[-1] \to U \oplus V \to X \stackrel{f}{\to} Y.$$ 

If $a = \sigma^i(a')$ for some $i \ge 0$ then both $U, V$ belong to $\mathcal{D}$. In particular, $f$ lies in $\Sigma$. Otherwise, we have $a' = \sigma^i(a)$ for some $i > 0$.  Then none of $U,V$ is the zero object and each of $U, V$ correspond to an arc similar to that of $X$ (and $Y$). In this case, we have an exact triangle
$$X[-1] \to U' \oplus V' \to Y \stackrel{g}{\to} X$$
where both $U', V'$ belong to $\mathcal{D}$. In particular, $g$ lies in $\Sigma$.
\end{proof}

\begin{lemma} \label{lemma1}
Let $f: X \to Y$ be a morphism in $\Sigma$ with $X$  (or $Y$) indecomposable and not in $\mathcal{D}$. Then the other object can be written as $Z \oplus Z'$ where $Z' \in \mathcal{D}$ and $Z$ is indecomposable and not in $\mathcal{D}$ with $\ell_X$ (respectively $\ell_Y$) similar to $\ell_Z$.
\end{lemma}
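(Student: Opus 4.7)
I handle the case where $X$ is indecomposable and not in $\mathcal{D}$; the case where $Y$ is indecomposable is symmetric, after rotating the triangle. Complete $f$ to an exact triangle
$$X \xrightarrow{f} Y \to C \to X[1]$$
in $\mathcal{C}$ with $C \in \mathcal{D}$, and use Krull--Schmidt to write $Y = Z \oplus Z'$, where $Z'$ is the sum of those indecomposable summands of $Y$ lying in $\mathcal{D}$ and $Z$ collects the rest. Since $\pi(Z') = 0$, the $Z$-component $f_Z\colon X \to Z$ of $f$ satisfies $\pi(f_Z) = \pi(f)$, an isomorphism, so $f_Z \in \Sigma$. Moreover $Z \neq 0$: otherwise $Y \in \mathcal{D}$ would give $\pi(X) = 0$ and hence $X \in \mathcal{D}$ (since $\mathcal{D}$ is the kernel of $\pi$), contradicting the hypothesis.

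The heart of the proof is to show that $Z$ is indecomposable with $\ell_Z \sim \ell_X$. Decompose $Z = Z_1 \oplus \cdots \oplus Z_n$ into indecomposables $Z_i \notin \mathcal{D}$. Each component $f_i\colon X \to Z_i$ of $f_Z$ must be non-zero: otherwise the triangle splits off $Z_i$ as a direct summand of $\mathrm{cone}(f_Z) \in \mathcal{D}$, forcing $Z_i \in \mathcal{D}$. Applying the octahedral axiom to the factorisation $f_Z = \bigl(\bigoplus_i f_i\bigr)\circ\Delta_X$, where $\Delta_X\colon X \to X^n$ is the diagonal (whose cone is $X^{n-1}$), yields the exact triangle
$$X^{n-1} \to \mathrm{cone}(f_Z) \to \bigoplus_{i=1}^n \mathrm{cone}(f_i) \to X^{n-1}[1].$$
Each non-zero $f_i$ between the indecomposables $X$ and $Z_i$ is modelled by a (possibly degenerate) quadrilateral as recalled in Section~\ref{secondsection}, so $\mathrm{cone}(f_i)$ is combinatorially explicit in terms of $(\ell_X, \ell_{Z_i})$. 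Combining this explicit description with $\mathrm{cone}(f_Z) \in \mathcal{D}$ and the thickness of $\mathcal{D}$, a case analysis on the arcs $\ell_{Z_i}$ rules out $n \geq 2$: the left-hand term $X^{n-1}$ of the octahedral triangle lies outside $\mathcal{D}$ (since $X \notin \mathcal{D}$ and $\mathcal{D}$ is thick), and the right-hand sum cannot reconcile this with the middle being in $\mathcal{D}$.

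Once $n = 1$, the arc similarity $\ell_Z \sim \ell_X$ follows by induction on the number of indecomposable summands of $\mathrm{cone}(f_Z)$: decompose $\mathrm{cone}(f_Z) = \bigoplus_k C_k$ with each $C_k$ indecomposable in $\mathcal{D}$ and peel off one $C_k$ at a time via the octahedral axiom. Each such step corresponds to a quadrilateral triangle whose only side outside $\mathcal{D}$ realises the shift of a single endpoint of the current arc within some interval $(z_j^-, z_j^+)$. After absorbing all the $C_k$, $\ell_Z$ differs from $\ell_X$ only within the intervals $[z_j^-, z_j^+]$, which is exactly $\ell_Z \sim \ell_X$. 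The principal obstacle is the $n = 1$ reduction in the middle paragraph: the formal triangulated-categorical diagram alone does not yield a contradiction, and one genuinely needs to exploit the explicit arc combinatorics of $\mathcal{C}_{(S', M')}$.
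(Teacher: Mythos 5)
Your reduction to $Y = Z \oplus Z'$ with $Z' \in \mathcal{D}$ and $f_Z \in \Sigma$ is fine, and your closing observation (once $Z$ is indecomposable, the cone of $f_Z$ being in $\mathcal{D}$ forces $\ell_Z \sim \ell_X$ via the quadrilateral description) is correct, though the induction on summands of the cone is unnecessary: for a nonzero map between indecomposables the cone is $E_1[1]\oplus E_2[1]$ with $E_1=\{a,\sigma^{-1}(c)\}$, $E_2=\{b,\sigma^{-1}(d)\}$, and both being $\mathcal{D}$-contractible is literally the statement that the endpoints pair up into the same added intervals.

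The genuine gap is the step you yourself flag: ruling out $n \ge 2$. The octahedral triangle $X^{n-1} \to \mathrm{cone}(f_Z) \to \bigoplus_i \mathrm{cone}(f_i) \to X^{n-1}[1]$ with middle term in $\mathcal{D}$ yields no contradiction by itself --- it only says $\bigoplus_i \mathrm{cone}(f_i) \cong X^{n-1}[1]$ in the Verdier quotient, and exploiting that would require a Krull--Schmidt property of $\overline{\mathcal{C}}$ that the paper only establishes \emph{after} (and using) this lemma, so the promised ``case analysis'' is both absent and at risk of circularity. The paper avoids all of this with a test-object argument you never use: since $\mathcal{D}' = {}^{\perp}\mathcal{D} = \mathcal{D}^{\perp}$ is closed under shifts, applying $\Hom(W,-)$ for $W \in \mathcal{D}'$ to the triangle shows $\Hom(W,f)$ is an isomorphism for every such $W$. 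Choosing $W$ crossing $\ell_{Z_i}$ but not $\ell_X$ shows each $\ell_{Z_i}$ is similar to $\ell_X$; then choosing $W' \in \mathcal{D}'$ with $\ell_{W'}$ crossing $\ell_X$ (hence crossing every $\ell_{Z_i}$, as crossing by arcs of $\mathcal{D}'$ depends only on the similarity class) gives $\dim\Hom(W'[-1],X) = 1$ versus $\dim\Hom(W'[-1],Z) = n$, forcing $n = 1$. Some such dimension count against objects of $\mathcal{D}^{\perp}$ is the missing idea; without it your proof is incomplete at its central step.
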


\begin{proof} Assume that $X$ is indecomposable and not in $\mathcal{D}$. Since $\mathcal{D}$ is a triangulated subcategory, we may discard any direct summand $D$ of $Y$ which is in $\mathcal{D}$, and the resulting morphism $f': X \to Y/D$ will remain in $\Sigma$. That follows from a simple application of the octahedral axiom. Therefore, we may assume that $Y = Y_1 \oplus \cdots \oplus Y_r$ where the $Y_i$ are indecomposable and not in $\mathcal{D}$. We want to prove that $r=1$. Assume that some $\ell_{Y_i}$ is not similar to $\ell_X$. Then we can find $Z \in \mathcal{D}^\perp$ indecomposable such that $\ell_Z, \ell_{Y_i}$ cross but $\ell_Z, \ell_X$ do not. That means that $\Hom(Z,f)$ is not an isomorphism, a contradiction. Therefore, $\ell_{Y_i}$ is similar to $\ell_X$ for all $i$. Now, pick any $Z' \in \mathcal{D}^\perp$ indecomposable such that $\ell_X, \ell_{Z'}$ cross. Observe that $\Hom(Z'[-1],X)$ is one dimensional while $\Hom(Z'[-1],Y)$ is $r$ dimensional, so $r=1$.
\end{proof}

The following lemma is standard and can be easily checked. Compare \cite[Section 2, part (vii)]{GHJ}.

\begin{lemma} \label{FactorizationLemma}
 Let $f: X \to Y$ and $g: Y \to Z$ be morphisms in $\mathcal{C}$ where $X,Y,Z$ are indecomposable such that $gf$ is nonzero. Let 
$\ell_X = \{a,  b\}$ and $\ell_Z= \{c, d\}$ cyclically ordered as $a \le c \le b \le d \le a$ with respect to the orientation of the disk. If $\ell_Y=\{e, f\}$, then $e,f$ are such that $a \le e \le c\le b \le f \le d \le a$ cyclically.
\end{lemma}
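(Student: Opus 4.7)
The plan is to translate each non-vanishing Hom into a combinatorial condition on arc endpoints and then use $gf \neq 0$ to pin down the correct labeling of $\ell_Y$. The starting point is the combinatorial criterion: for indecomposables $U, V$ in $\mathcal{C}_{(S', M')}$ with $\ell_U = \{u_1, u_2\}$ and $\ell_V = \{v_1, v_2\}$, one has $\Hom(U, V) \neq 0$ iff $\ell_U$ crosses $\ell_{V[-1]} = \{\sigma^-(v_1), \sigma^-(v_2)\}$. Unwinding this crossing, up to relabeling of $\{v_1, v_2\}$, the endpoint $v_1$ sits on the counter-clockwise arc from $u_1$ (inclusive) up to $\sigma^2(u_2)$ (inclusive), and $v_2$ on the symmetric arc from $u_2$ up to $\sigma^2(u_1)$. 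Intuitively, $\ell_V$ arises from $\ell_U$ by a counter-clockwise pivot of each endpoint, possibly by zero.

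Next, I apply this to $f \colon X \to Y$ to fix a labeling $\ell_Y = \{e, f\}$ with $e$ on the counter-clockwise arc from $a$ toward $b$ and $f$ on the arc from $b$ toward $a$. The cyclic hypothesis $a \le c \le b \le d \le a$ then places $e$ on the same component of $\partial S' \setminus \{a, b\}$ as $c$, and $f$ on the same component as $d$. Applying the same criterion to $g \colon Y \to Z$ produces a labeling of $\ell_Z = \{c, d\}$ in one of two shapes: either (\textbf{Case A}) $c$ sits on the counter-clockwise arc from $e$ toward $f$ (through $b$) and $d$ on the counter-clockwise arc from $f$ toward $e$ (through $a$); or (\textbf{Case B}) the roles of $c, d$ are swapped. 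In Case A, intersecting $c \in (a, b)^+$ with the condition that $c$ is counter-clockwise from $e$ through $b$ immediately gives $e \le c \le b$ in the counter-clockwise arc from $a$ to $b$; the mirror argument gives $b \le f \le d \le a$ in the arc from $b$ to $a$; together these yield the desired cyclic order $a \le e \le c \le b \le f \le d \le a$.

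The hard part is ruling out Case B, which is where the hypothesis $gf \neq 0$ must be used. In Case B, composing $g$ after $f$ would match the endpoints of $\ell_X$ and $\ell_Z$ as $a \to e \to d$ and $b \to f \to c$, reversing the natural pairing dictated by the cyclic order $a, c, b, d$ (which pairs $a$ with $c$ and $b$ with $d$ via the unique generator of $\Hom(X,Z)$). By the standard composition rules for morphisms in the cluster category of type $\mathbb{A}$, equivalently the mesh relations in the Auslander-Reiten quiver (compare \cite[Section 2(vii)]{GHJ}), such a swapped composition vanishes: concretely, in Case B one can factor $g$ through an exchange triangle whose precomposition with $f$ is zero. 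Since this contradicts $gf \neq 0$, only Case A can occur, giving the lemma.
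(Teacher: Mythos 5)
The paper gives no proof of this lemma at all: it is declared ``standard'' with a pointer to \cite[Section 2, part (vii)]{GHJ}, which is essentially the composition rule in question, so I can only assess your argument on its own terms. Your reduction is the right one and is correct as far as it goes: $\Hom(U,V)\neq 0$ iff $\ell_U$ crosses $\ell_{V[-1]}=\{\sigma^{-}(v_1),\sigma^{-}(v_2)\}$, hence (up to relabelling) $v_1\in[u_1,\sigma^2(u_2)]$ and $v_2\in[u_2,\sigma^2(u_1)]$; applying this to $f$ and to $g$ produces exactly your Cases A and B, and the Case A bookkeeping does give the asserted cyclic order.

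The problem is that essentially all of the content of the lemma sits in ruling out Case B, and there your proof is an assertion rather than an argument. Case B is not vacuous: one can have $\Hom(X,Y)\neq 0$, $\Hom(Y,Z)\neq 0$ and $\Hom(X,Z)\neq 0$ with the endpoints of $\ell_Y$ strictly ``past'' those of $\ell_Z$ on both sides (e.g.\ $\ell_X=\{0,10\}$, $\ell_Y=\{5,15\}$, $\ell_Z=\{2,12\}$ in the infinity-gon), so the vanishing of $gf$ there is precisely what must be proved. Saying that ``such a swapped composition vanishes'' by ``mesh relations'' is not enough: the mesh-category description only computes compositions of chains of irreducible maps within a single $\mathbb{Z}A_\infty$ component of the AR quiver, whereas here $f$ and $g$ may lie in the infinite radical or connect different components. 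Likewise, ``one can factor $g$ through an exchange triangle whose precomposition with $f$ is zero'' names no triangle and verifies no factorization. To make this precise you should complete $f$ to its exchange triangle $X\xrightarrow{f}Y\to A[1]\oplus C[1]\to X[1]$, where (writing $\ell_Y=\{e,y\}$ to avoid the clash with the morphism $f$) $\ell_{A[1]}=\{\sigma(a),e\}$ and $\ell_{C[1]}=\{\sigma(b),y\}$, apply $\Hom(-,Z)$, and show that in Case B the one-dimensional space $\Hom(Y,Z)$ is in the image of $\Hom(A[1]\oplus C[1],Z)$, forcing $gf=f^{*}(g)=0$; even this step requires knowing that a certain composite through $A[1]$ or $C[1]$ is nonzero, which ultimately needs either the explicit Igusa--Todorov model or the cited property of \cite{GHJ}. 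As written, the argument assumes the conclusion at its only non-trivial point.
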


\begin{lemma} \label{lemma2}
Let $f: X \to Y$ and $g: Y \to Z$ where $X,Y,Z$ are indecomposable such that $gf \in \Sigma$ is nonzero. Then both $f,g$ are in $\Sigma$.
\end{lemma}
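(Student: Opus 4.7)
The plan is to split into cases based on whether $X$ (equivalently $Z$) lies in $\mathcal{D}$. Since $gf \in \Sigma$, the cone $C_{gf}$ lies in the thick subcategory $\mathcal{D}$, and the exact triangle $X \xrightarrow{gf} Z \to C_{gf} \to X[1]$ together with thickness of $\mathcal{D}$ shows that $X \in \mathcal{D}$ if and only if $Z \in \mathcal{D}$.

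\textbf{Case 1:} $X, Z \in \mathcal{D}$. The non-vanishing $\Hom(X,Z) \neq 0$ (forced by $gf \neq 0$) together with the $2$-Calabi-Yau description of $\Hom$-spaces in $\mathcal{C}$ pins $\ell_X$ and $\ell_Z$ inside a common added interval $(z_i^-, z_i^+)$: otherwise, after shifting, the endpoints could not interlock, so $\Hom(X,Z)$ would vanish. Applying the Factorization Lemma one gets a cyclic ordering $a \le e \le c \le b \le h \le d$ with $a, b, c, d$ all in this interval, which traps $e$ and $h$ inside the same interval. Hence $Y \in \mathcal{D}$, all three cones lie in $\mathcal{D}$, and $f, g \in \Sigma$.

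\textbf{Case 2:} $X, Z \notin \mathcal{D}$. I would apply Lemma~\ref{lemma1} to $gf \in \Sigma$: since $Z$ is indecomposable, the only option is that $\ell_X \sim \ell_Z$. Write $\ell_X = \{a, b\}$ and $\ell_Z = \{c, d\}$ with $a, c$ in a common added interval $I_1$ and $b, d$ in a common added interval $I_2$. The Factorization Lemma gives $a \le e \le c$ and $b \le h \le d$ cyclically, and since the global cyclic order on $a, c, b, d$ forces the counterclockwise arc from $a$ to $c$ to be the short one (i.e. not passing through $b$ or $d$), that arc stays inside $I_1$ and thus contains $e$; analogously $h \in I_2$. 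Hence $\ell_Y \sim \ell_X$. Now $f: X \to Y$ and $g: Y \to Z$ are non-zero morphisms between indecomposables with pairwise similar arcs, so Lemma~\ref{lemmatechnical} applies to each. The inequality $a \le e$ inside $I_1$ translates to $a = \sigma^i(e)$ for some $i \ge 0$ (since $\sigma$ is the counterclockwise predecessor), placing $f$ in the first alternative of that lemma and yielding $f \in \Sigma$; the inequality $e \le c$ does the same for $g$.

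The main obstacle is the geometric bookkeeping of Case 2: one must check that the cyclic-order inequalities from the Factorization Lemma genuinely keep $e$ and $h$ inside the intervals $I_1$ and $I_2$ (rather than going the long way around the disk), and that the direction of the $\sigma$-shift in each case hits the first, rather than the second, alternative of Lemma~\ref{lemmatechnical}. Once this is pinned down, the argument is essentially a bookkeeping exercise on cyclic orders, with the only non-formal inputs being Lemmas~\ref{lemma1} and~\ref{lemmatechnical} and the Factorization Lemma.
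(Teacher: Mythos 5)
Your Case 2 ($X,Z\notin\mathcal{D}$) is correct and is essentially the paper's own argument: the paper reads off from the cone of $gf$ that $a,\sigma^{-1}(c)$ lie in a common added interval $I_1$ and $b,\sigma^{-1}(d)$ in a common added interval $I_2$, uses the Factorization Lemma to place $e\in I_1$ and $h\in I_2$, and then exhibits the cones of $f$ and $g$ directly instead of citing Lemma~\ref{lemmatechnical}. Your non‑wrapping argument (the counterclockwise arc from $a$ to $c$ cannot leave $I_1$, since it would then sweep out the whole complement of $I_1$ and in particular contain $b$) is exactly the point the paper leaves implicit, and it is valid there precisely because $b\notin I_1$ when $X\notin\mathcal{D}$.

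Case 1, however, breaks down at the very step you flagged, and no bookkeeping will repair it: when all four endpoints lie in a single added interval $I$, the Factorization Lemma does \emph{not} trap both $e$ and $h$ in $I$. If $\ell_X$ and $\ell_Z$ cross inside $I$, then in the labelling $a\le c\le b\le d\le a$ one of the two arcs $[a,c]$, $[b,d]$ of the factorization rectangle is necessarily the one that leaves $I$ and wraps around the entire boundary, so the corresponding endpoint of $\ell_Y$ can be any marked point of the disk. Concretely, let $z_0,\dots,z_5$ be consecutive marked points of $I$ and $m$ a marked point outside $I$, and put $\ell_X=\{z_2,z_5\}$, $\ell_Z=\{z_0,z_3\}=\ell_{X[2]}$, $\ell_Y=\{z_3,m\}$. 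All three Hom-spaces are one-dimensional, and since $Z\cong X[2]=\mathbb{S}X$ in $\mathcal{C}$ the non-degeneracy of the Serre pairing forces $gf\ne 0$; the cone of $gf$ is $\{z_2,z_4\}[1]\oplus\{z_1,z_5\}[1]\in\mathcal{D}$, so $gf\in\Sigma$, yet the cone of $f$ contains the summand $\{z_5,\sigma^{-1}(m)\}[1]\notin\mathcal{D}$, so $f\notin\Sigma$. Thus the statement is actually false for $X,Z\in\mathcal{D}$; the paper's proof silently assumes $I_1\ne I_2$, and the lemma is only ever applied to objects outside $\mathcal{D}$. The correct move is to impose the hypothesis $X\notin\mathcal{D}$ (your Case 2), not to argue Case 1.
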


\begin{proof}
Let $\ell_X=\{a, b\}$ and $\ell_Z=\{c, d\}$. Since $gf$ factors through an indecomposable $Y$ with $\ell_Y=\{e, f\}$, we know that $a \le e \le c$ and $b \le f \le d$ by Lemma \ref{FactorizationLemma}. Observe that we have an exact triangle
$$Z[-1] \to E_1 \oplus E_2 \to X \to Z$$
where $\ell_{E_1}=\{a, \sigma^{-1}(c)\}$ and $\ell_{E_2}=\{b, \sigma^{-1}(d)\}$. Therefore, $a, \sigma^{-1}(c)$ belong to the same added interval $I_1$ and $b, \sigma^{-1}(d)$ belong to the same added interval $I_2$. Therefore, all of $a,e,c,\sigma^{-1}(c)$ belong to $I_1$ while all of $b,f,d,\sigma^{-1}(d)$ belong to $I_2$. Therefore, we see that we have an exact triangle
$$Y[-1] \to F_1 \oplus F_2 \to X \stackrel{f}{\to} Y$$
where $F_1, F_2 \in \mathcal{D}$ and
an exact triangle
$$Z[-1] \to G_1 \oplus G_2 \to Y \stackrel{g}{\to} Z$$
where $G_1, G_2 \in \mathcal{D}$.
\end{proof}

\begin{proposition} Let $X$ be an object in $\mathcal{C}$ with no direct summand in $\mathcal{D}$. Then $\pi(X)$ is indecomposable if and only if $X$ is indecomposable. Moreover, if $Y$ is indecomposable in $\mathcal{C}$, then $\pi(X) \cong \pi(Y)$ if and only if $\ell_X, \ell_Y$ are similar.
\end{proposition}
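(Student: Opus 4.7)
I would split the statement into two halves. The direction ``$\pi(X)$ indecomposable $\Rightarrow$ $X$ indecomposable'' is a simple contrapositive: if $X = X_1 \oplus X_2$ with both summands nonzero, the hypothesis that $X$ has no $\mathcal{D}$-summand forces $X_1, X_2 \notin \mathcal{D}$, hence $\pi(X_1), \pi(X_2) \ne 0$, so $\pi(X) = \pi(X_1) \oplus \pi(X_2)$ is decomposable in $\overline{\mathcal{C}}$. The substantive claim is the converse: if $X$ is indecomposable (hence not in $\mathcal{D}$), then $\End_{\overline{\mathcal{C}}}(\pi(X))$ is local, whence $\pi(X)$ is indecomposable.

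To prove locality, I would show that the natural ring map
\[\eta \colon \End_{\mathcal{C}}(X) \longrightarrow \End_{\overline{\mathcal{C}}}(\pi(X)), \quad f \longmapsto [(f, \id_X)],\]
is surjective; since the source is local (by Krull--Schmidt), so is its image. Given a left fraction $(f, g) \colon X \xrightarrow{f} W \xleftarrow{g} X$ with $g \in \Sigma$, Lemma \ref{lemma1} writes $W = Z \oplus Z'$ with $Z' \in \mathcal{D}$, $Z$ indecomposable, and $\ell_Z \sim \ell_X$. The projection $p \colon W \to Z$ has cone $Z' \in \mathcal{D}$, so $p \in \Sigma$; taking $Y_3 = Z$ in the equivalence diagram shows $(f, g) \sim (pf, pg)$. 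Moreover $pg \in \Sigma$ is nonzero, else $\mathrm{cone}(pg) = Z \oplus X[1] \in \mathcal{D}$ would force $X \in \mathcal{D}$. Since $\Hom$-spaces between indecomposables in a type $\mathbb{A}$ cluster category are at most one-dimensional, write $pf = d \cdot h_0$ and $pg = c \cdot h_0$ for a fixed generator $h_0 \in \Hom_{\mathcal{C}}(X, Z)$ with $c \ne 0$. Taking $Y_3 = Z$ with $Y_1 = Z \to Z$ equal to $\id_Z$, $Y_2 = X \to Z$ equal to $pg$, and $g_3 = pg \in \Sigma$, one verifies
\[(pf, pg) \sim ((d/c) \cdot \id_X, \id_X) = \eta\bigl((d/c) \cdot \id_X\bigr),\]
so $\eta$ is surjective.

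For the ``moreover'' part, suppose $\ell_X \sim \ell_Y$ with $X, Y$ indecomposable not in $\mathcal{D}$. Comparing the relative positions of their endpoints within the added intervals produces a nonzero morphism $X \to Y$ or $Y \to X$ in $\mathcal{C}$, and Lemma \ref{lemmatechnical} then promotes it to an isomorphism $\pi(X) \cong \pi(Y)$ in $\overline{\mathcal{C}}$. Conversely, assume $\pi(X) \cong \pi(Y) \ne 0$ with $Y$ indecomposable and $X$ without $\mathcal{D}$-summands; then $X$ is indecomposable and not in $\mathcal{D}$ by the first part. Represent the isomorphism as a left fraction $(f, g) \colon X \to W \leftarrow Y$. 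By Lemma \ref{lemma_iso_fraction} we have $f \in \Sigma$, so Lemma \ref{lemma1} applies to each of $f$ and $g$, giving decompositions $W = Z \oplus Z' = \tilde Z \oplus \tilde Z'$ with $Z, \tilde Z$ indecomposable not in $\mathcal{D}$, $\ell_Z \sim \ell_X$ and $\ell_{\tilde Z} \sim \ell_Y$. The Krull--Schmidt property of $\mathcal{C}$ forces $Z \cong \tilde Z$ (each is the unique non-$\mathcal{D}$ indecomposable summand of $W$), hence $\ell_X \sim \ell_Z = \ell_{\tilde Z} \sim \ell_Y$.

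The main technical point I expect to rely on is the one-dimensionality of $\Hom_{\mathcal{C}}$ between indecomposables with similar arcs, a standard feature of type $\mathbb{A}$ cluster categories that I would cite from \cite{IT} or \cite{GHJ}. Were it unavailable in precisely the needed form, I would instead compute $\End_{\overline{\mathcal{C}}}(\pi(X))$ as the filtered colimit $\varinjlim_{g \colon X \to Z \in \Sigma} \Hom_{\mathcal{C}}(X, Z)$ and identify it with $k$ by inspecting transition maps on a cofinal system of indecomposables $Z$ with $\ell_Z \sim \ell_X$.
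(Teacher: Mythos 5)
Your proof of the equivalence ``$X$ indecomposable $\Leftrightarrow$ $\pi(X)$ indecomposable'' is correct and takes a genuinely different route from the paper: you compute $\End_{\overline{\mathcal{C}}}(\pi(X))$ directly, reducing an arbitrary fraction $(f,g)$ first to one with indecomposable denominator via Lemma \ref{lemma1} and then to a scalar multiple of the identity using the one-dimensionality of $\Hom$-spaces in $\mathcal{C}$. The paper instead splits off an indecomposable summand of $\pi(X)$ as a section represented by a fraction and uses Lemmas \ref{lemma1} and \ref{lemma2} to show the section is invertible. Your route is arguably cleaner, avoids Lemma \ref{lemma2} altogether, and yields $\End_{\overline{\mathcal{C}}}(\pi(X))\cong k$ as a byproduct; the fraction manipulations check out (you do implicitly use that $\pi(X)\neq 0$, which holds because $\ker\pi=\mathcal{D}$). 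The converse of the ``moreover'' part coincides with the paper's argument.

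There is, however, one step that fails as written: in the forward direction of the ``moreover'' part you claim that $\ell_X\sim\ell_Y$ produces a nonzero morphism $X\to Y$ or $Y\to X$ in $\mathcal{C}$. The order $\le$ on a similarity class is only partial, and incomparable similar arcs can have no nonzero morphisms in either direction. Concretely, if both endpoints lie in added intervals with points $a_j$ and $b_j$ indexed so that $\sigma^{-1}(a_j)=a_{j+1}$ and $\sigma^{-1}(b_j)=b_{j+1}$, take $\ell_X=\{a_0,b_0\}$ and $\ell_Y=\{a_1,b_{-1}\}$: then $\ell_X$ and $\ell_{Y[-1]}=\{a_2,b_0\}$ share the endpoint $b_0$, and $\ell_Y$ and $\ell_{X[-1]}=\{a_1,b_1\}$ share the endpoint $a_1$, so neither pair crosses and $\Hom_{\mathcal{C}}(X,Y)=\Hom_{\mathcal{C}}(Y,X)=0$. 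The repair is short: any two similar arcs have a common upper bound $\ell_Z$ in the partial order (as in the proof of Lemma \ref{lemma_commondenom}), and the resulting nonzero morphisms $X\to Z$ and $Y\to Z$ lie in $\Sigma$ by Lemma \ref{lemmatechnical}, so $\pi(X)\cong\pi(Z)\cong\pi(Y)$ by transitivity. (The paper's own proof simply cites Lemma \ref{lemmatechnical} here, which presupposes a nonzero morphism, so the detour through $Z$ is needed in either write-up.)
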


\begin{proof} We know that the kernel of $\pi$ is $\mathcal{D}$, since $\mathcal{D}$ is a thick subcategory,  see~\cite[Proposition 4.6.2]{K}. Therefore, if $\pi(X)$ is indecomposable, then $X$ has to be indecomposable. Conversely, let $X$ be indecomposable. Then there is an indecomposable object $X'$ in $\mathcal{C}$ with $\pi(X')$ indecomposable with a left fraction $X'\xrightarrow{f} Y\xleftarrow{g} X$ that represents a section. Therefore, we have $X\xrightarrow{u} Z\xleftarrow{v} X'$ such that the composition
$$\xymatrix{& & W & & \\
& Y \ar[ur]^p & & Z \ar[ul]_q &\\
X' \ar[ur]^{f} && X \ar[ul]_{g} \ar[ur]^{u} && X' \ar[ul]_{v}}$$
is an isomorphism. We may assume that $Y$ and $Z$ have no direct non-zero summand in $\mathcal{D}$ as otherwise, we may replace $X'\xrightarrow{f} Y\xleftarrow{g} X$ and $X\xrightarrow{u} Z\xleftarrow{v} X'$ respectively by equivalent left fractions $X'\xrightarrow{f'} Y'\xleftarrow{g'} X$ and $X\xrightarrow{u'} Z'\xleftarrow{v'} X'$ where $Y', Z'$ have no non-zero direct summand in $\mathcal{D}$. Since $g, v, qv \in \Sigma$, we get from Lemma \ref{lemma1} that $Y,Z, W$ are indecomposable. Since $pf \in \Sigma$, we know that $f \in \Sigma$, showing that our section is actually an isomorphism. This shows the first part. For the second part, it follows from Lemma \ref{lemmatechnical} that if $\ell_X,\ell_Y$ are similar, then $\pi(X) \cong \pi(Y)$. If $\pi(X) \cong \pi(Y)$, then we have an isomorphism $X\xrightarrow{f} Z\xleftarrow{g} Y$. We know that $Z$ can be chosen to be indecomposable. We know from Lemma \ref{lemma1} that $\ell_X, \ell_Z$ are similar and $\ell_Y, \ell_Z$ are similar. Therefore, $\ell_X, \ell_Y$ are similar.
\end{proof}

\begin{corollary}
 Isoclasses of indecomposable objects in $\overline{\mathcal{C}}$ are in bijection with arcs in $(S, \overline{M})$, where $\overline{M} = M \cup {\rm acc}(M)$ is the closure of $M$.
\end{corollary}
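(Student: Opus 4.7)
The plan is to upgrade the preceding proposition from a classification by similarity classes of arcs in $(S',M')$ to an explicit parameterization by arcs in $(S,\overline{M})$. By that proposition, the functor $\pi$ induces a bijection between similarity classes of non-$\mathcal{D}$-contractible arcs of $(S',M')$ and isoclasses of nonzero indecomposable objects of $\overline{\mathcal{C}}$: every such isoclass has the form $\pi(X)$ for some indecomposable $X\in\mathcal{C}$ not in $\mathcal{D}$, and two such are isomorphic exactly when the underlying arcs are similar. The zero object of $\overline{\mathcal{C}}$ corresponds to the $\mathcal{D}$-contractible arcs, which under our bijection should disappear.

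To finish, I would construct an explicit bijection between similarity classes of non-$\mathcal{D}$-contractible arcs of $(S',M')$ and arcs of $(S,\overline{M})$ via the contraction map $\rho:S'\to S$ that collapses each added interval $[z_i^-,z_i^+]$ back to its original accumulation point $z_i$. Since $\rho$ fixes each regular marked point of $M$ and sends each $z_{ij}$ to $z_i\in {\rm acc}(M)$, it restricts to a surjection $M'\twoheadrightarrow\overline{M}$. For an arc $\ell$ of $(S',M')$ with endpoints $a,b$, the image $\rho(\ell)$ is a continuous simple curve in $S$ whose endpoints $\rho(a),\rho(b)$ lie in $\overline{M}$ and coincide if and only if $a,b$ sit in a common added interval $(z_i^-,z_i^+)$; equivalently, $\rho(\ell)$ is a genuine arc of $(S,\overline{M})$ precisely when $\ell$ is not $\mathcal{D}$-contractible. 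The assignment $[\ell]\mapsto \rho(\ell)$ is then well defined and injective on similarity classes, directly from the definition of the similarity relation as equality after contracting the intervals $[z_i^-,z_i^+]$.

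For surjectivity, given an arc $\alpha$ of $(S,\overline{M})$ with endpoints $a,b$, I would lift each endpoint (keep it if it lies in $M$, otherwise choose any $z_{ij}$ in the added interval replacing the accumulation point) and extend $\alpha$ through the added intervals to a simple curve $\ell$ in $S'$ connecting the chosen lifts while avoiding every other marked point; by construction $\rho(\ell)=\alpha$, and $\ell$ is not $\mathcal{D}$-contractible because $a\neq b$ in $\overline{M}$. I do not expect a serious obstacle: the essential content has already been carried by the preceding proposition, and the remaining work is topological bookkeeping — verifying that lifts of arcs can always be chosen inside the contractible added intervals without creating self-intersections or colliding with other points of $M'$, which is straightforward since each added interval is a contractible piece of $\partial S'$ containing no marked points other than the $z_{ij}$.
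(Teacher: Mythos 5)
Your argument is correct and is essentially the one the paper intends: the corollary is stated without proof as an immediate consequence of the preceding proposition, since the similarity relation is by definition equality after contracting the added intervals, so similarity classes of non-$\mathcal{D}$-contractible arcs of $(S',M')$ correspond exactly to arcs of $(S,\overline{M})$. Your explicit contraction-map bookkeeping just makes that identification precise.
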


We know that the dimension of Hom-spaces between arcs in $\mathcal{C}$ is at most one. We want to prove the same property for Hom-spaces in $\overline{\mathcal{C}}$ and give a characterization of when such a Hom-space is non-zero. Let $\alpha$ be an equivalence class of arcs. We introduce a partial order $\le_\alpha$ in this equivalence class as follows. We write $\alpha_1 \le_\alpha \alpha_2$ (or simply $\alpha_1 \le \alpha_2$) if there are non-negative integers $i,j$ such that for $\alpha_2=\{a, b\}$, we have $\alpha_1=\{\sigma^i(a), \sigma^j(b)\}$. This is clearly a partial order on the equivalence class of $\alpha$. Observe that if $\alpha_1 \le \alpha_2$ and if $k$ is non-negative and minimal such that $\alpha_1, \alpha_2[k]$ share an endpoint, then $\alpha_2[k]$ is obtained by rotating $\alpha_1$ about the common endpoint by following the orientation (counter-clockwise).

\begin{lemma} Let $X,Y$ be indecomposable objects having similar arcs, and let $f: X\to Y$ be non-zero. If $\ell_X \le \ell_Y$, then $\pi(f)$ is an isomorphism and otherwise, $\pi(f)=0$.
\end{lemma}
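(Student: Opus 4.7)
The plan is to apply Lemma~\ref{lemmatechnical} to the non-zero morphism $f\colon X\to Y$: since $\ell_X$ and $\ell_Y$ are similar, the lemma places us in one of two mutually exclusive situations. Either the middle term $U\oplus V$ of the triangle $Y[-1]\to U\oplus V\to X\xrightarrow{f}Y$ lies in $\mathcal{D}$, so $f\in\Sigma$; or both $U,V$ are non-zero with arcs similar to $\ell_X$, and there is a non-zero morphism $g\colon Y\to X$ lying in $\Sigma$. I will match these two outputs against $\ell_X\le\ell_Y$ and its negation.

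Suppose first that $\ell_X\le\ell_Y$. By the definition of the partial order, each endpoint of $\ell_X$ is a non-negative power of $\sigma$ applied to the corresponding endpoint of $\ell_Y$, which is precisely the condition triggering the first branch of Lemma~\ref{lemmatechnical}. Hence $f\in\Sigma$, and $\pi(f)$ is an isomorphism in $\overline{\mathcal{C}}$ by Lemma~\ref{lemma_iso_fraction} / the definition of the Verdier localization. Conversely, the explicit description of the arcs of $U$ and $V$ in terms of the four endpoints of $\ell_X$ and $\ell_Y$ shows that the first branch can only occur when the relation $a=\sigma^i(a')$, $i\ge0$ is accompanied by the analogous relation $b=\sigma^j(b')$, $j\ge0$; in particular, $\ell_X\not\le\ell_Y$ forces us into the second branch of Lemma~\ref{lemmatechnical}, with the strict inequality $\ell_Y<\ell_X$.

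In that case, we have a non-zero $g\colon Y\to X$ with $g\in\Sigma$, so $\pi(g)$ is an isomorphism, and it suffices to prove $gf=0$ in $\mathcal{C}$: once this is established, $\pi(gf)=\pi(g)\pi(f)=0$ and cancelling the invertible $\pi(g)$ yields $\pi(f)=0$. The composition $gf$ is an endomorphism of the indecomposable $X$; since indecomposable objects in the discrete cluster category of type $\mathbb{A}$ are bricks, $gf$ is either zero or an isomorphism. If it were an isomorphism, $f$ would be a split monomorphism, so the indecomposable $X$ would be a direct summand of the indecomposable $Y$, forcing $X\cong Y$ and $\ell_X=\ell_Y$, contradicting $\ell_Y<\ell_X$. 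Thus $gf=0$, and the proof is complete. The main obstacle is the matching between Lemma~\ref{lemmatechnical}'s single-endpoint dichotomy and the two-sided partial-order dichotomy: one has to examine the four endpoints simultaneously, checking that the summands $U,V$ lie in $\mathcal{D}$ precisely when both pairs of endpoints rotate in the same $\sigma$-direction, so that the mixed configurations do not support a non-zero $f$.
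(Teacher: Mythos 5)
Your proof is correct, and its first half (the case $\ell_X \le \ell_Y$) coincides with the paper's: both reduce to the observation that the cone of $f$ lies in $\mathcal{D}$, so $f \in \Sigma$ and $\pi(f)$ is invertible. For the case $\ell_X \not\le \ell_Y$ you take a genuinely different route. The paper argues directly that $\pi(f)=0$ by exhibiting a factorization of $f$ through an object of $\mathcal{D}$, namely the one whose arc is $\{a,a'\}$ where $a,a'$ are the endpoints of $\ell_X,\ell_Y$ lying in the common added interval (equivalently, by noting that the triangle $Y[-1]\to E_1\oplus E_2\to X\xrightarrow{f}Y$ becomes split under $\pi$, so its connecting morphism $\pi(f)$ vanishes). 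You instead invoke the reverse morphism $g\colon Y\to X$ in $\Sigma$ supplied by Lemma~\ref{lemmatechnical}, observe that $gf\in\End_{\mathcal{C}}(X)=k\cdot\mathrm{id}_X$ cannot be invertible (otherwise $f$ splits and $X\cong Y$, contradicting the strict inequality $\ell_Y<\ell_X$), conclude $gf=0$, and cancel the invertible $\pi(g)$. This is a clean, purely formal argument that avoids naming an explicit $\mathcal{D}$-object; its only geometric input is the fact both proofs need, namely that a non-zero morphism between similar arcs forces the arcs to be comparable in the partial order (no ``mixed'' rotations of the two endpoints), which you correctly flag as the step requiring the four-endpoint check. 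One cosmetic point: the implication $f\in\Sigma\Rightarrow\pi(f)$ invertible is the defining property of the localization rather than Lemma~\ref{lemma_iso_fraction}, which is its converse.
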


\begin{proof} We have an exact triangle $Y[-1] \to E_1 \oplus E_2 \to X \to Y$. The condition $\ell_X \le \ell_Y$ immediately implies that $E_1, E_2$ are in $\mathcal{D}$. If $\ell_X \not\le \ell_Y$, then since $\Hom_\C(X,Y) \ne 0$, this implies that $\ell_Y \le \ell_X$ and $\ell_X, \ell_Y$ cross. But in this case, the above exact triangle splits upon applying the functor $\pi$. Alternatively, let $a, a'$ be the two endpoints of $\ell_X, \ell_Y$, respectively, in a given interval $(z_i^-, z_i^+)$. When $\ell_X \not\le \ell_Y$, the morphism $f$ factors through the object whose arc is $\{a,a'\}$, thus factors through an object in $\mathcal{D}$. Therefore, $\pi(f)=0$.
\end{proof}

\begin{lemma}~\label{lemma_commondenom}
 Let $(f_1,g_1): X\xrightarrow{f_1} Z_1\xleftarrow{g_1} Y$ and $(f_2,g_2): X\xrightarrow{f_2} Z_2\xleftarrow{g_2} Y$ be morphisms in $\overline{\mathcal{C}}$ between indecomposable objects $X,Y$ (which are also indecomposable in $\mathcal{C}$). Then there is an indecomposable object $Z$ in $\overline{\mathcal{C}}$ (which is also indecomposable in $\mathcal{C}$) and morphisms $u_i: Z_i \to Z$, for $i=1,2$, such that $(f_i, g_i)$ is equivalent to $(u_if_i, u_ig_i)$.
\end{lemma}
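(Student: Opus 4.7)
The plan is to build $Z$ as an indecomposable direct summand of a common codomain for $Z_1,Z_2$ produced by the Ore axiom of the multiplicative system $\Sigma$, and to define the $u_i$ by projecting onto that summand. Since $X,Y$ are indecomposable in $\overline{\mathcal{C}}$ they are nonzero there, so $X,Y\notin\mathcal{D}$ (as $\mathcal{D}$ is the kernel of $\pi$); in particular the denominators $g_1,g_2$ are both nonzero elements of $\Sigma$.

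First, I apply the left Ore property of $\Sigma$ to the span $Z_1\xleftarrow{g_1}Y\xrightarrow{g_2}Z_2$, with $g_1\in\Sigma$. This produces an object $W\in\mathcal{C}$ together with morphisms $v_1:Z_1\to W$ and $v_2:Z_2\to W$ such that $v_1 g_1=v_2 g_2$ and $v_2\in\Sigma$. Since $g_2\in\Sigma$ as well, the composition $v_2 g_2=v_1 g_1$ also lies in $\Sigma$.

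Next I invoke Lemma~\ref{lemma1} applied to the morphism $v_2 g_2:Y\to W$ in $\Sigma$: because $Y$ is indecomposable and not in $\mathcal{D}$, this yields a decomposition $W\cong Z\oplus W'$ with $W'\in\mathcal{D}$, $Z$ indecomposable in $\mathcal{C}$, and $\ell_Z$ similar to $\ell_Y$; the preceding proposition then guarantees that $Z$ is also indecomposable in $\overline{\mathcal{C}}$. Let $p:W\to Z$ be the projection associated to this decomposition. The split triangle $W'\to W\xrightarrow{p} Z\to W'[1]$ shows that ${\rm cone}(p)=W'[1]\in\mathcal{D}$, so $p\in\Sigma$. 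Setting $u_i:=p\,v_i:Z_i\to Z$, the composition $u_i g_i=p\,(v_i g_i)$ lies in $\Sigma$ since $\Sigma$ is closed under composition, and therefore $(u_i f_i,u_i g_i)$ is a bona fide left fraction from $X$ to $Y$.

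Finally, the equivalence $(f_i,g_i)\sim(u_i f_i,u_i g_i)$ is witnessed by the diagram in the definition of equivalence of left fractions, taking $Y_3=Z$, the arrow $Z_i\to Y_3$ to be $u_i$ and the arrow $Z\to Y_3$ to be $1_Z$: the two required commutativities hold tautologically, and $g_3=u_i g_i$ lies in $\Sigma$ as just observed. The main technical point is the passage from the (possibly decomposable) Ore object $W$ to an indecomposable summand $Z$ while preserving that the composed denominator maps remain in $\Sigma$; Lemma~\ref{lemma1} together with the thickness of $\mathcal{D}$ (which ensures that the projection $p$ is itself in $\Sigma$) make this step routine.
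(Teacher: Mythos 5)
Your proof is correct, but it takes a genuinely different route from the paper's. The paper first reduces to the case where each $Z_i$ is indecomposable (by discarding $\mathcal{D}$-summands), then argues geometrically: since $g_1,g_2\in\Sigma$, both arcs $\ell_{Z_1},\ell_{Z_2}$ are similar to $\ell_Y$ and satisfy $\ell_Y\le\ell_{Z_i}$ in the partial order on the similarity class, so one can pick an indecomposable $Z$ with $\ell_{Z_i}\le\ell_Z$ for both $i$ and take $u_i\colon Z_i\to Z$ to be the resulting morphisms in $\Sigma$. You instead proceed purely formally: the left Ore axiom for the multiplicative system $\Sigma$ produces a common codomain $W$ with $v_1g_1=v_2g_2$ and $v_2\in\Sigma$, and then Lemma~\ref{lemma1} applied to $v_2g_2\colon Y\to W\in\Sigma$ strips $W$ down to an indecomposable summand $Z$ with complement in $\mathcal{D}$; the projection $p$ lies in $\Sigma$ because its cone is a shift of an object of $\mathcal{D}$, so $u_i=pv_i$ works and the equivalence diagram with $Y_3=Z$ is immediate. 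All the ingredients you use (the Ore axiom, Lemma~\ref{lemma1}, the proposition identifying indecomposables of $\overline{\mathcal{C}}$ with indecomposables of $\mathcal{C}$ not in $\mathcal{D}$) are available at this point in the paper, and each step checks out. The trade-off: your argument is more robust and would apply verbatim to any Verdier quotient where an analogue of Lemma~\ref{lemma1} holds, whereas the paper's argument yields extra combinatorial information — namely that $\ell_Z$ is similar to $\ell_Y$ with $\ell_Y\le\ell_{Z_i}\le\ell_Z$ — which the authors reuse explicitly in the proof of Proposition~\ref{desc-arcs}. Your construction does recover that $\ell_Z\sim\ell_Y$ via Lemma~\ref{lemma1}, so nothing essential is lost, but the order-theoretic picture is less visible.
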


\begin{proof} If $Z_i$ is not indecomposable, then $Z_i = M_i \oplus D_i$ where $D_i \in \mathcal{D}$. Write $f_i = [f_{i1}, f_{i2}]^T$ and $g = [g_{i1}, g_{i2}]^T$. It is easy to check that each $g_{i1}$ lies in $\Sigma$ and that $(f_i,g_i)$ is equivalent to $(f_{i1},g_{i1})$. For the second part, observe that the arcs of $Z_1, Z_2$ are equivalent. Therefore, we may assume that the $Z_i$ are indecomposable. Since $g_1, g_2 \in \Sigma$, we have $\ell_Y \le \ell_{Z_i}$ for $i=1,2$. Then there is an indecomposable object $Z$ such that $\ell_Z$ is similar to $\ell_Y$ and such that $\ell_{Z_i} \le \ell_Z$ for $i=1,2$. We have morphisms $u_i: Z_i \to Z$ which are in $\Sigma$. Now, each of $(f_i, g_i)$ is equivalent to $(u_if_i, u_ig_i)$. This proves the statement.
\end{proof}

\begin{proposition}~\label{desc-arcs} Let $X,Y$ be indecomposable objects that are not in $\mathcal{D}$. Then $\Hom_{\overline{\mathcal{C}}}(X,Y[1])$ is at most one dimensional. It is one dimensional if and only if one of the following conditions is met for the arcs $\ell_X, \ell_Y$ of $X,Y$ in $(S, \overline{M})$:
\begin{enumerate}[$(1)$]
    \item $\ell_X, \ell_Y$ cross
\item $\ell_X \ne \ell_Y$ share exactly one accumulation point, and we can go from $\ell_X$ to $\ell_Y$ by rotating $\ell_X$ about the common endpoint following the orientation of $S$.
\item $\ell_X = \ell_Y$ have both of their endpoints accumulation points. \end{enumerate}
\end{proposition}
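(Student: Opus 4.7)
The plan is to describe $\Hom_{\overline{\C}}(X, Y[1])$ as a filtered colimit of at-most-one-dimensional $\Hom$-spaces in $\C$, and then translate the non-vanishing of this colimit into a combinatorial statement about crossings of arcs in $(S', M')$ after admissible shifts within accumulation intervals.

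First I would represent each morphism in $\Hom_{\overline{\C}}(X, Y[1])$ by a left fraction $X \xrightarrow{f} Z \xleftarrow{s} Y[1]$ with $s \in \Sigma$. By Lemma~\ref{lemma1} I may discard any direct summand of $Z$ lying in $\mathcal{D}$ and replace $Z$ by an indecomposable with $\ell_Z$ similar to $\ell_{Y[1]}$; the lemma immediately preceding the proposition then forces $\ell_{Y[1]} \le \ell_Z$. By Lemma~\ref{lemma_commondenom}, any two such fractions admit a common indecomposable denominator, yielding
\[
\Hom_{\overline{\C}}(X, Y[1]) \;\cong\; \varinjlim_{Z}\, \Hom_{\C}(X, Z),
\]
with $Z$ running over indecomposables similar to $Y[1]$ with $\ell_{Y[1]} \le \ell_Z$, and transition maps given by post-composition with $\Sigma$-morphisms. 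Each $\Hom_{\C}(X, Z)$ is at most one-dimensional since Ext-spaces in a discrete cluster category of type $\mathbb{A}$ are at most one-dimensional; and by the preceding lemma each transition map is either an isomorphism or zero. The colimit is therefore at most one-dimensional.

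The combinatorial step is then to characterize when this colimit is non-zero. Fixing representatives $\tilde X$ of $X$ and $\widetilde{Y[1]}$ of $Y[1]$ in $\C$, the admissible $Z$'s are precisely those obtained from $\widetilde{Y[1]}$ by moving each endpoint counter-clockwise within its associated accumulation interval (where one exists). I would check that a crossing $\ell_{\tilde X} \cap \ell_Z$ in $(S', M')$ is realisable exactly in the three listed cases: (1) if $\ell_X, \ell_Y$ already cross in $(S, \overline{M})$, the crossing lifts to $(S', M')$ for appropriate representatives and some admissible $Z$; (2) if $\ell_X, \ell_Y$ share exactly one accumulation endpoint $z$, the admissible counter-clockwise shifts of $\ell_Z$'s endpoint at $z$ place it on the side of $\ell_{\tilde X}$'s endpoint that produces a crossing precisely when $\ell_Y$ is the counter-clockwise rotation of $\ell_X$ at $z$; (3) if $\ell_X = \ell_Y$ with both endpoints accumulation, two independent counter-clockwise shifts of $\ell_Z$'s endpoints can be pushed past those of $\ell_{\tilde X}$ in both intervals, producing a crossing. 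In every other configuration (disjoint arcs, arcs sharing only a regular endpoint, or arcs sharing an accumulation endpoint but with $\ell_Y$ clockwise from $\ell_X$ there), no admissible $Z$ produces a crossing, and the colimit vanishes.

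The main obstacle is the case analysis in the last step, in particular translating the partial order $\le$ on similar arcs into the directional rotation condition in (2), and confirming that sharing only a regular endpoint—despite its topological appeal—never yields a non-zero class, since $\sigma$ then moves the endpoint out of that point and no admissible shift brings it back.
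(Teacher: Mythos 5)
Your dimension bound is essentially the paper's argument: both reduce, via Lemma~\ref{lemma1} and Lemma~\ref{lemma_commondenom}, to fractions with a common indecomposable denominator $Z$ for which $\Hom_{\C}(X,Z)$ is at most one-dimensional; your colimit packaging of this is fine (any two classes become dependent in a common $\Hom_{\C}(X,Z)$). For the non-vanishing criterion you take a genuinely different route: the paper does not compute the colimit but instead exhibits, in each of the three cases, an explicit exact triangle in $\C$ and checks it does not split under $\pi$ (using that none of the middle terms has arc similar to $\ell_X$), and conversely rules out the remaining configurations by the "share an accumulation point" argument. Your route is viable and arguably more uniform, but it has one concrete imprecision that, as stated, makes the backward implication false.

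The issue is your criterion "a crossing $\ell_{\tilde X}\cap\ell_Z$ is realisable for some admissible $Z$" and its negation "in every other configuration no admissible $Z$ produces a crossing." A nonzero class in the colimit requires the crossing to persist for \emph{all sufficiently large} admissible $Z$ (cofinally), together with nonvanishing of the transition maps $f\mapsto uf$; a crossing realised by a single $Z$ can die under further shifts. This is not hypothetical: take $\ell_X=\{a,z\}$ and $\ell_Y=\{b,z\}$ sharing the accumulation point $z$ with $\ell_Y$ the \emph{clockwise} rotation of $\ell_X$ at $z$. Then for representatives with the interval-endpoint of $\widetilde{Y[1]}$ on the $z^-$ side of that of $\tilde X$, the arcs do cross in $(S',M')$, so $\Hom_{\C}(X,Z)\neq 0$ for the first few admissible $Z$; the class nevertheless vanishes in $\overline{\C}$ because once the endpoint of $Z$ passes that of $\tilde X$ (moving toward $z^+$) the crossing disappears. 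So your blanket statement is wrong for exactly the configuration that condition (2) is designed to exclude. Your own discussion of case (2) secretly uses the correct (eventual) criterion, so the fix is to state throughout that non-vanishing means the crossing survives all further counter-clockwise shifts, and to add the (easy but necessary) verification that when crossings persist the transition maps $\Hom_{\C}(X,Z)\to\Hom_{\C}(X,Z')$ are isomorphisms rather than zero --- this uses the composition rule in $\C$ (Lemma~\ref{FactorizationLemma} and its converse), which you nowhere invoke.
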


\begin{proof}
It follows from Lemma~\ref{lemma_commondenom} that two morphisms from $X$ to $Y[1]$ in $\overline{\mathcal{C}}$ are given by left fractions $(f_i,g_i): X\xrightarrow{f_i} Z\xleftarrow{g_i} Y[1]$ where $Z$ is indecomposable. Therefore, we may assume that $\Hom_\C(X,Z)$ and $\Hom_\C(Y[1],Z)$ are both non-zero (and hence both one-dimensional). Now, it is straightforward to check that for $\lambda_1, \lambda_2 \in k$ with $\lambda_2 \ne 0$, we have that $(\lambda_1 f,\lambda_2 g)$ is equivalent to $(\lambda_1\lambda_2^{-1} f,g)$. Moreover, for $\alpha_1, \alpha_2 \in k$, we have  $(\alpha_1 f, g) + (\alpha_2 f, g)$ =  $((\alpha_1 + \alpha_2)f,g)$. That proves the first part of the proposition. For the second part, we may assume that $\ell_X, \ell_Y[1]$ are not similar in $(S',M')$, since otherwise, it is clear that $\Hom_{\overline{\mathcal{C}}}(X,Y[1])$ is non-zero if and only if one of the 3 conditions holds. Assume first that $\ell_X,\ell_Y$ cross in $(S,\overline{M})$, and hence in $(S', M')$. Consider the non-split exact triangles

$$Y \to A \oplus B \to X \stackrel{f}{\to} Y[1]$$
in $\C$ where $A$ or $B$ may be the zero object. Then at least one of $A, B$ is not in $\mathcal{D}$. If $\ell_A, \ell_X$ are similar, then since $\ell_A \le \ell_X$, then $\pi$ sends the exact triangle to the split triangle. But in that case, we see that $\ell_X, \ell_Y$ share exactly one point which is an accumulation point in $(S,\overline{M})$ and we can go from $\ell_X$ to $\ell_Y$ by rotating $\ell_X$ about the common endpoint following the opposite orientation of $S$. This contradicts that $\ell_X,\ell_Y$ cross in $(S,\overline{M})$.
Therefore, we may assume that none of $\ell_A, \ell_B$ is similar to $\ell_X$. That means that the above exact triangle does not become split when we apply $\pi$, so that $\pi(f) \ne 0$.
Assume now that $\ell_X,\ell_Y$ do not cross in $(S,\overline{M})$. If $\Hom_{\overline{\mathcal{C}}}(X,Y[1])$ is non-zero, then there is an indecomposable object $Y'$ with $\ell_Y \sim \ell_{Y'}$ such that $\ell_X, \ell_{Y'}$ cross in $(S',M')$. That is only possible if $\ell_X, \ell_Y$ share an accumulation point in $(S, \overline{M})$. We may assume that $\ell_X, \ell_Y[1]$ share exactly one accumulation point in $(S, \overline{M})$, as otherwise, $\ell_X, \ell_{Y[1]}$ would be similar, which has already been treated. As we have seen in the proof of Lemma \ref{lemma_commondenom}, we have that for any arc $\ell_{Y'} \sim \ell_Y$ with $\ell_Y < \ell_{Y'}$, the arcs $\ell_X, \ell_{Y'}$ cross. That implies case (2).  Conversely, assume that case (2) holds. There is a non-split exact triangle 
$$Y[-1] \to A \oplus B \to X \to Y[1]$$
in $\mathcal{C}$ where $A \in \mathcal{D}$ and $B \not \in \mathcal{D}$ and such that $\ell_B$ is neither similar to $\ell_X$, nor to $\ell_Y[-1]$. Therefore, $\pi$ sends this triangle to a non-split triangle, which proves that $\Hom_{\overline{\mathcal{C}}}(X,Y[1])$ is non-zero.
\end{proof}

 Now, let use give a description of the possible non-split exact triangles in $\overline{\mathcal{C}}$ coming from the non-zero extensions between indecomposable objects. Recall the illustration of arcs and exact triangles in Figure~\ref{ex-triangle} of Section~\ref{secondsection}. Arcs configured as in Figure~\ref{ex-triangle} in $\overline{\mathcal{C}}$ give rise to two (non-split) exact triangles in exactly the same way as in the discrete cluster category case. Note that this also works when some of the marked points $a_i$ are accumulations points. That covers case (1) of Proposition \ref{desc-arcs}.

In addition, we have the following configuration of arcs in Figure~\ref{triangleC} which gives rise to a non-split exact triangle \[X\rightarrow B \rightarrow Y\rightarrow X[1].\]
where the open marked point is an accumulation point. That covers case (2) of Proposition \ref{desc-arcs}.

\begin{figure}[H]
\includegraphics[scale=0.42]{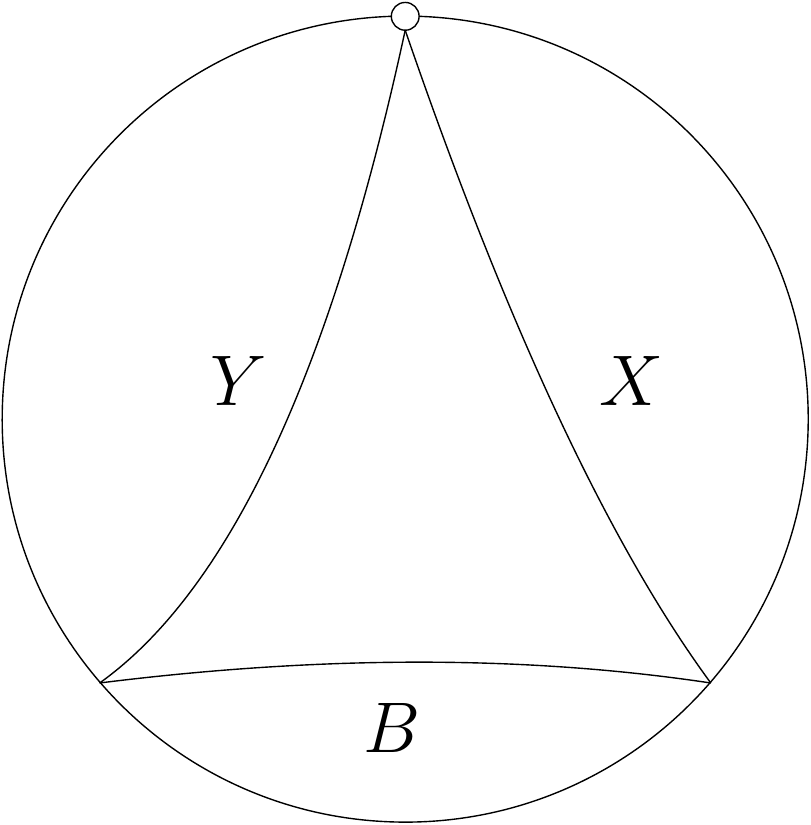}
\caption{Arcs in an exact triangle}
\label{triangleC}
\end{figure}

Finally, if $X$ is such that $\ell_X$ is a limit arcs whose endpoints are both accumulation points, then we have a non-split exact triangle
\[X\rightarrow 0 \rightarrow X\rightarrow X[1]=X.\]
That is the last case of Proposition \ref{desc-arcs}.

\end{section}

\section{Cluster-tilting subcategories}

We let $\mathcal{A}$ denote a Hom-finite triangulated $k$-category and $\mathcal{T}$ be a full additive subcategory of $\mathcal{A}$. All subcategories we consider in this section are full, additive, closed under direct summands and isomorphisms. Let $X$ be an object in $\mathcal{A}$. By $\Hom(\mathcal{T},X)=0$ we mean $\Hom(T,X)=0$ for all $T\in \mathcal{T}$.  Similarly, for $\Hom(X,\mathcal{T})=0$ we mean $\Hom(X,T)=0$ for all $T\in \mathcal{T}$.  By $\mathcal{T}[1]$, we mean the subcategory generated by all $T[1]$ where $T\in\mathcal{T}$ and $[1]$ is the shift (or suspension) functor.  We now give the necessary definitions we will use in this section.

\begin{definition} Let $A$ be an object in $\mathcal{A}$. An object $T\in \mathcal{T}$ together with a morphism $f: T\to A$ is called a \emph{right} $\mathcal{T}$-\emph{approximation} of $A$ if for each $g: T'\to A$ with $T'\in \mathcal{T}$, there exists a morphism $h:T'\to T$ such that $g=f\circ h$. If every object of $\mathcal{A}$ admits a right $\mathcal{T}$-\emph{approximation}, then $\mathcal{T}$ is said to be \emph{contravariantly finite} in $\mathcal{A}$.
\end{definition} 

The definition of a \emph{left} $\mathcal{T}$-\emph{approximation} and covariantly finiteness in $\mathcal{A}$ can be defined dually.

\begin{definition}~\label{defn-cts} Let $\mathcal{A}$ be a $\Hom$-finite triangulated $k$-category. A subcategory $\mathcal{T}$ of $\mathcal{A}$ is \emph{cluster-tilting} if 
\begin{enumerate}[(i)]
\item We have that $\Hom_{\mathcal{A}}(\T,X[1])=0$ if and only if $\Hom_{\mathcal{A}}(X,\T[1])=0$ if and only if $X\in \mathcal{T}$.
\item The subcategory $\mathcal{T}$ is \emph{functorially finite}, i.e. $\mathcal{T}$ is both covariantly finite and contravariantly finite in $\mathcal{A}$.
\end{enumerate}
\end{definition}

Following the notation in~\cite{GHJ}, we define the following types of convergence at an accumulation point.

\begin{definition} Consider $S$ with any set $M$ of marked points. Let $E$ be a given set of arcs of $(S,M)$ and let $z$ be an accumulation point on $\partial S$.

\begin{enumerate}
\item We say that $E$ has a \emph{left fountain} at $z$ if there exists a marked point $m\in M$ with a sequence of arcs $\{m,x_i\}_{i\in\mathbb{Z}_{\geq 0}}$ in $E$, with $x_i \ne z$ for all $i$, such that the marked points $x_i\in M$ converge to $z$ on the left. Here, we call $m$ the \emph{base point} for this left fountain.
\item We say that $E$ has a \emph{right fountain} at $z$ if there exits a marked point $m\in M$ with a sequence of arcs $\{m,x_i\}_{i\in\mathbb{Z}_{\geq 0}}$ in $E$, with $x_i \ne z$ for all $i$, such that the marked points $x_i\in M$ converge to $z$ on the right. We call $m$ the \emph{base point} for this right fountain.
\item We say that $E$ has a \emph{two-sided fountain} at $z$ if it has both a left and a right fountain at $z$ with the same base point $m$.
\item Finally, we say that $E$ has a \emph{leapfrog convergence} at $z$ if there is a sequence of non-crossing arcs, $\{x_i,y_i\}_{i\in\mathbb{Z}_{\geq 0}}$ in $E$, with $x_i \ne z$ and $y_i \ne z$ for all $i$, such that the marked points $x_i\in M$ converge to $z$ on the left and the marked points $y_i\in M$ converges to $z$ on the right.
\end{enumerate}

\end{definition}

\begin{figure}[H]
    \centering
    \includegraphics[scale=0.45]{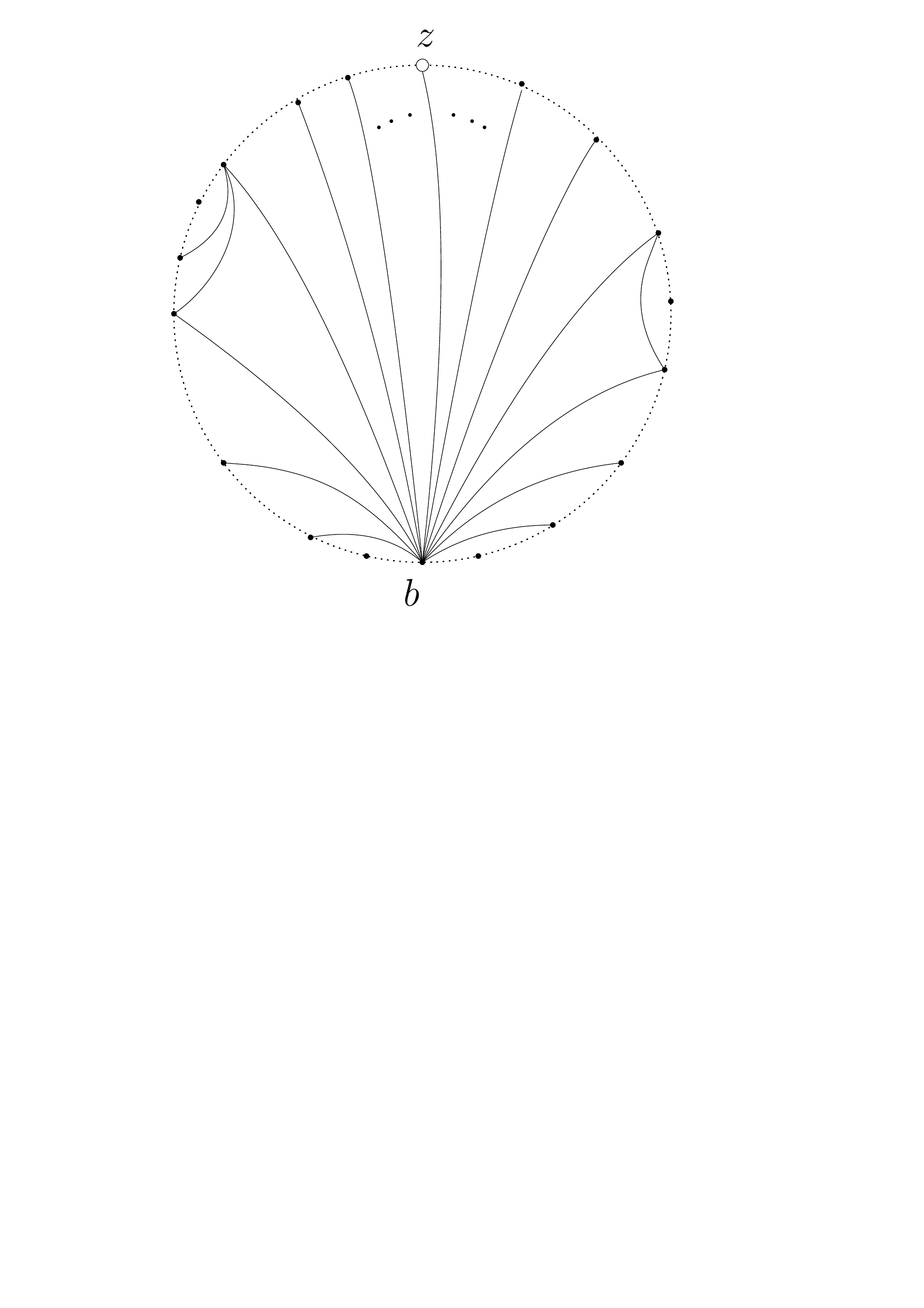}
    \caption{An illustration of a two-sided fountain at an accumulation point with base point $b$.}
    \label{fig:cts4}
\end{figure}

Note that $\mathcal{C}_{(S,M)}$ can be naturally seen as a full triangulated subcategory of $\overline{\mathcal{C}}$. In what follows, a full additive subcategory of $\mathcal{C}_{(S,M)}$ or of $\overline{\mathcal{C}}$ will be identified with its indecomposable objects or, more conveniently, with the corresponding arcs. By a \emph{geometric completion} $\overline{\mathcal{T}}$ of a full additive subcategory $\mathcal{T}$ of $\mathcal{C}_{(S,M)}$, we mean the full additive subcategory $\overline{\mathcal{T}}$ of $\overline{\mathcal{C}}$ generated by the indecomposable objects from $\mathcal{T}$, plus those corresponding to accumulation of arcs (thus limit arcs) of $\mathcal{T}$. Recall the definition of limit arcs in Definition~\ref{lim-arc}. Note that limit arcs in $\mathcal{C}_{(S,M)}$ become normal arcs in $\overline{\mathcal{C}}$, however, we still call them limit arcs. Indeed, these limit arcs have a different behavior, both geometrically and algebraically, as we will see. 

\begin{theorem}~\label{cts} Let $\mathcal{T}$ be a subcategory of $\overline{\mathcal{C}}$. The subcategory $\mathcal{T}$ is cluster-tilting in $\overline{\mathcal{C}}$ if and only if the following conditions hold:
\begin{enumerate}
\item $\mathcal{T} \cap \mathcal{C}_{(S,M)}$ is a maximal collection of non-crossing arcs in $M$.
\item Every accumulation point $z\in acc(M)$ has a two-sided fountain in $\mathcal{T}\cap \mathcal{C}_{(S,M)}$.
\item The subcategory $\mathcal{T}$ is the geometric completion of $\mathcal{T}\cap \mathcal{C}_{(S,M)}$.
\end{enumerate}
\end{theorem}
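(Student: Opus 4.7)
The plan is to prove both implications by translating Ext-vanishing into geometric conditions on arcs in $(S,\overline{M})$ via Proposition~\ref{desc-arcs}, and by leveraging the known classification of cluster-tilting subcategories of $\mathcal{C}_{(S,M)}$ from \cite{GHJ}.

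\emph{Forward direction.} Assume $\mathcal{T}$ is cluster-tilting. First, two arcs in $\mathcal{T}\cap \mathcal{C}_{(S,M)}$ cannot cross by Proposition~\ref{desc-arcs}(1). For maximality in (1), if an ordinary arc $\alpha$ does not cross any arc of $\mathcal{T}\cap \mathcal{C}_{(S,M)}$, a continuity argument shows it also cannot cross any limit arc of $\mathcal{T}$ (crossing a limit arc forces crossing all sufficiently close approximating ordinary arcs); since $\alpha$ has two regular endpoints, the rotation configurations of Proposition~\ref{desc-arcs}(2)-(3) cannot apply to $\alpha$ either, so $\Hom(\mathcal{T},\alpha[1])=0$ and thus $\alpha\in\mathcal{T}$. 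For (3), a limit arc $\ell\in\mathcal{T}$ must arise as a limit of arcs in $\mathcal{T}\cap\mathcal{C}_{(S,M)}$: otherwise, rotating $\ell$ slightly about its accumulation endpoint produces an ordinary arc $\ell'$ lying in $\mathcal{T}\cap \mathcal{C}_{(S,M)}$ (by maximality), and $\Hom(X_\ell,X_{\ell'}[1])\ne 0$ by Proposition~\ref{desc-arcs}(2), contradicting Ext-vanishing; conversely, any arc which is a limit of arcs of $\mathcal{T}\cap\mathcal{C}_{(S,M)}$ emanating from a common base point has no Ext with any object of $\mathcal{T}$ and thus belongs to $\mathcal{T}$. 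Condition (2) then follows because a leapfrog (or one-sided) configuration at an accumulation point $z$ would force the existence of a limit arc at $z$ whose rotation-partners are in $\mathcal{T}$ but which itself is not, violating the biconditional of Definition~\ref{defn-cts}(i); equivalently, such a configuration would obstruct the construction of left or right approximations near $z$.

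\emph{Backward direction.} Assume (1)-(3). For Ext-vanishing inside $\mathcal{T}$, Proposition~\ref{desc-arcs} reduces to three cases: crossings are excluded by (1), extended to limit arcs via (3) by a standard limiting argument; the rotation configurations of Proposition~\ref{desc-arcs}(2)-(3) are excluded because (2) combined with (3) forces every limit arc of $\mathcal{T}$ at a given accumulation point $z$ to have the common fountain base point $m$, and the only two distinct limit arcs at $z$ in $\mathcal{T}$ are then $\{m,z\}$ from the two sides, which are not obtained from one another by rotation about $z$ in the required sense. For $X\notin \mathcal{T}$: if $\ell_X$ is an ordinary arc then maximality in (1) supplies an arc of $\mathcal{T}\cap \mathcal{C}_{(S,M)}$ crossing it, giving non-zero Ext in both directions; if $\ell_X$ is a limit arc, then (3) says $\ell_X$ is not a limit of arcs of $\mathcal{T}\cap \mathcal{C}_{(S,M)}$, and one shows that either $\ell_X$ crosses an arc of $\mathcal{T}$ or one of the configurations (2)-(3) of Proposition~\ref{desc-arcs} between $\ell_X$ and a fountain arc at some shared accumulation endpoint produces non-zero Ext, in each of the two directions. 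Functorial finiteness is then obtained by constructing explicit left and right $\mathcal{T}$-approximations of an arbitrary object: for each endpoint of $\ell_X$, the immediate rotation-neighbor in $\mathcal{T}\cap\mathcal{C}_{(S,M)}$ exists by (1), and the two-sided fountains from (2) ensure such a neighbor exists even when the endpoint is an accumulation point, so that the approximation triangle can be assembled from two arcs of $\mathcal{T}$.

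The main obstacle is the careful handling of limit arcs at accumulation points, especially in two places: (a) ruling out leapfrog configurations in the forward direction by exhibiting an explicit object whose Ext-behavior or approximation fails; and (b) verifying functorial finiteness in the backward direction for objects whose arcs reach accumulation points, where the two-sided fountain hypothesis is exactly what gives the rotation-neighbors needed to build approximations. Both steps rest on the precise partitioning of Ext-producing arc configurations given by Proposition~\ref{desc-arcs}.
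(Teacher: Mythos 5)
Your overall architecture matches the paper's: translate Ext-vanishing through Proposition~\ref{desc-arcs}, import the classification of cluster-tilting subcategories of $\mathcal{C}_{(S,M)}$ from \cite{GHJ}, and isolate the behaviour at accumulation points. However, the two steps you yourself flag as ``the main obstacle'' are exactly where the proof lives, and your proposed mechanisms for them are either missing or incorrect. Most seriously, your claim that a leapfrog configuration at $z$ violates the biconditional of Definition~\ref{defn-cts}(i) is wrong: when there is a leapfrog at $z$, \emph{every} limit arc attached to $z$ crosses infinitely many of the leapfrog arcs, so no Ext-rigid object is being wrongly excluded from $\mathcal{T}$, and condition (i) can perfectly well hold. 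Leapfrogs must instead be excluded through functorial finiteness: one takes an object $M$ attached to $z$ crossing infinitely many leapfrog arcs and shows that any candidate right $\mathcal{T}$-approximation $T \to M$, having finitely many summands, misses an arc $\ell_K$ of $\mathcal{T}$ with both endpoints near $z$ on opposite sides, for which $\Hom(K[-1],M)\ne 0$ but $\Hom(K[-1],T)=0$ (this is the content of Lemma~\ref{l2}). Your ``equivalently'' conflates two conditions that are not equivalent here.

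Second, even granting that each accumulation point $z$ carries exactly one limit arc $\{z,b\}$ of $\mathcal{T}$, condition (2) does not ``then follow'': one must show that $b$ is the base of a two-sided fountain, which requires separately ruling out (a) a left (or right) fountain at $z$ with a different base $b'$, (b) arcs of $\mathcal{T}$ accumulating onto a limit arc $\{z,z'\}$ joining two accumulation points, and (c) the arcs near $z$ on one side eventually staying entirely on that side; each case is eliminated by its own approximation or maximality argument, and none of this appears in your proposal. Finally, in the backward direction the difficulty with functorial finiteness is not the existence of ``rotation-neighbors'': an object whose arc is a limit arc admits non-zero morphisms to and from \emph{infinitely many} objects of $\mathcal{T}$ (all the fountain arcs it crosses), and the point is to show that all of these factor through finitely many objects of $\mathcal{T}$ --- namely the limit arcs of the fountains and their extremal arcs on each side. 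That factorization analysis is the substance of the backward implication and is absent from your plan.
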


\begin{remark} The objects in a cluster-tilting subcategory $\mathcal{T}$ come from the non-crossing arcs in $\mathcal{T} \cap \mathcal{C}_{(S,M)}$, plus one limit arc for every two-sided fountain. 
\end{remark}

Before proving Theorem~\ref{cts} we will prove some auxiliary lemmas.

\begin{lemma}~\label{l1} Let $\mathcal{T}$ correspond to a collection of non-crossing arcs of $(S,\overline{M})$ and let $z$ be an accumulation point for which $\mathcal{T}$ does not have a leapfrog convergence at $z$. Then there is a marked point $m$ such that the limit arc $\{z, m\}$ does not cross any arc from $\mathcal{T}$.
\end{lemma}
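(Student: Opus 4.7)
The plan is to argue by contrapositive: assuming that for every $m \in M$ the limit arc $\{z,m\}$ is crossed by some arc of $\mathcal{T}$, I will produce a leapfrog convergence at $z$, contradicting the hypothesis. Pick a sequence $r_n \in M$ with $r_n \to z$ from the right (which exists since $z$ is two-sided) and, for each $n$, a crossing arc $\alpha_n = \{x_n,y_n\} \in \mathcal{T}$ of $\{z,r_n\}$. The alternation condition forces one endpoint, say $x_n$, into the short boundary arc strictly between $z$ and $r_n$, so $x_n \to z$ from the right (and $x_n \in M$ for $n$ large, since ${\rm acc}(M)$ is finite). Pairwise non-crossing of the $\alpha_n$'s in $\mathcal{T}$ then forces, up to passing to a subsequence, the other endpoints $y_n$ to move monotonically in the counter-clockwise order, so they converge to some $y^{*} \in \overline{M}$.

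I next do a case analysis on $y^{*}$. If $y_n \to z$ from the left, then $\alpha_n$ itself is the desired leapfrog at $z$, contradicting the hypothesis of the lemma. If $y^{*} \in M$, discreteness of $M$ forces $y_n = y^{*}$ for $n$ large, so that $\{x_n, y^{*}\}_n$ is a right fountain at $z$ with base $y^{*}$; a routine alternation check shows that any arc $\gamma \in \mathcal{T}$ crossing $\{z, y^{*}\}$ would be forced to cross $\alpha_n$ for $n$ large, which contradicts pairwise non-crossing in $\mathcal{T}$. Thus $\{z,y^{*}\}$ is uncrossed and $m=y^{*}$ is the required marked point, contradicting the contrapositive assumption.

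The remaining case is $y^{*} \in {\rm acc}(M) \setminus \{z\}$. The same alternation argument shows that the limit arc $\{z, y^{*}\}$ is uncrossed by $\mathcal{T}$, so $\mathcal{T}$ splits along this chord (no arc straddles it) and, in particular, $\mathcal{T}$ has no leapfrog at $y^{*}$ either, for any such leapfrog would have one endpoint on each side of $\{z, y^{*}\}$. I then pick $m \in M$ close to $y^{*}$ on the side of $\{z,y^{*}\}$ opposite to the one from which the $y_n$ approach $y^{*}$; any potential crosser of $\{z,m\}$ must lie entirely in that smaller region and must straddle $m$ between $y^{*}$ and $z$, which is an instance of the original problem localized to a strictly smaller boundary arc in which $y^{*}$ now plays the role that $z$ played. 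Since ${\rm acc}(M)$ is finite, an induction on the number of accumulation points still in play closes this recursion and yields the required $m \in M$. The main obstacle I anticipate is precisely this last step: organizing the recursion so that it terminates at a genuine marked point in $M$ rather than at yet another accumulation point, which is where the finiteness of ${\rm acc}(M)$ is essential.
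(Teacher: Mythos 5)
Your overall strategy (extract a sequence of pairwise non-crossing arcs of $\mathcal{T}$ with one endpoint tending to $z$, then analyse where the other endpoints accumulate) is essentially the paper's, and your fountain case $y^{*}\in M$ and your leapfrog case are handled correctly. The main problem is that you have misread the target: the ``marked point $m$'' in the conclusion is a marked point of $(S,\overline{M})$, so $m$ is allowed to be an accumulation point, and the paper's own proof ends one case with $m=z'\in{\rm acc}(M)$. Your case $y^{*}\in{\rm acc}(M)\setminus\{z\}$ is therefore already finished the moment you show $\{z,y^{*}\}$ is uncrossed; the recursion you then launch to produce $m\in M$ is attempting to prove a strictly stronger statement that is false. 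Concretely, let $z'$ be another accumulation point and let $\mathcal{T}$ consist of the arcs $\{z',x_i\}$ and $\{z',y_j\}$ with $x_i\to z$ from the left and $y_j\to z$ from the right. These arcs are pairwise non-crossing (they share the endpoint $z'$), there is no leapfrog at $z$ (one endpoint of every arc is constantly $z'$), yet for every $m\in M$ the limit arc $\{z,m\}$ crosses all but finitely many arcs of one of the two families. The only uncrossed limit arc at $z$ is $\{z,z'\}$, so your induction ``on the number of accumulation points still in play'' cannot terminate at a point of $M$.

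A second, independent gap is that your case analysis on $y^{*}$ omits the possibility $y^{*}=z$ with the $y_n$ approaching $z$ from the \emph{right}, i.e.\ from the same side as the $x_n$. This is compatible with all your constraints: take $\alpha_n=\{x_n,y_n\}$ with $z<x_n<r_n<y_n$ on the right of $z$ and the intervals $(x_n,y_n)$ pairwise disjoint and shrinking to $z$; such arcs are non-crossing, each crosses its $\{z,r_n\}$, and both endpoints tend to $z$ from the right. This configuration yields neither a leapfrog nor, directly, an uncrossed limit arc, so your argument stalls there. The paper treats exactly this situation as its first case (all but finitely many arcs meeting a small neighbourhood of $z$ are contained in it and, in the absence of a leapfrog, lie on a single side of $z$), and you would need an analogous argument before passing to the accumulating-endpoint analysis.
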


\begin{proof} If there is an open interval $\mathcal{N}$ containing $z$ such that all but finitely many arcs having an endpoint in $\mathcal{N}$ are entirely contained in $\mathcal{N}$, then since there is no leapfrog convergence at $z$, we can assume further that $\mathcal{N}$ is chosen in such a way that the arcs entirely contained in $\mathcal{N}$ have both of their endpoints on a given side. In that case, we can easily find the wanted arc. Therefore, we may assume that there is a collection $C$ of pairwise distinct arcs whose endpoints on one end are accumulating to $z$, and without loss of generality we can assume they accumulate on the left. Let $\alpha_i = \{z_i,m_i\}$ be those arcs with the $z_i$ accumulating to $z$ on the left. By our assumption above, we can further assume that the $m_i$ do not accumulate to $z$ on the left. 
If the number of $m_i$ is finite, then this means there is a marked point $m$ such that there are infinitely many arcs $\alpha_i$ having $m$ as an endpoint. In this case, the limit arc $\{z, m\}$ does not cross any arc in $\mathcal{T}$. If the number of these $m_i$ is infinite,
then there exists another accumulation point $z'$ such that a subset of the arcs in $C$ accumulate to $z'$. Since these arcs do not cross, the accumulation has to be on the right of $z'$. Take $m=z'$, then the limit arc $\{z, z'\}$ does not cross any arc in $\mathcal{T}$. 
\end{proof}

\begin{lemma}~\label{l0} If there is a limit arc between two accumulation points, then the corresponding object cannot be in a cluster-tilting subcategory.
\end{lemma}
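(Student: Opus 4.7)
The proof should be a short direct application of case (3) of Proposition~\ref{desc-arcs}. The plan is to derive a self-extension of $X$ that cannot vanish, then invoke condition (i) of Definition~\ref{defn-cts}.

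First I would set the stage: let $X$ be an indecomposable object of $\overline{\mathcal{C}}$ with $\ell_X=\{z,z'\}$ where both $z$ and $z'$ are accumulation points in $\overline{M}$. By the third clause of Proposition~\ref{desc-arcs} applied with $Y=X$ (the equality $\ell_X=\ell_Y$ with both endpoints being accumulation points is precisely case (3)), the space $\Hom_{\overline{\mathcal{C}}}(X,X[1])$ is one-dimensional. Equivalently, as the paragraph right after Proposition~\ref{desc-arcs} records, there is a non-split exact triangle
\[X\longrightarrow 0\longrightarrow X\longrightarrow X[1]\]
in $\overline{\mathcal{C}}$, which exhibits an explicit non-zero element of $\Hom_{\overline{\mathcal{C}}}(X,X[1])$.

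Now suppose, for contradiction, that $X$ belongs to some cluster-tilting subcategory $\mathcal{T}\subseteq\overline{\mathcal{C}}$. The first axiom of Definition~\ref{defn-cts} demands, in particular, that $\Hom_{\overline{\mathcal{C}}}(T,X[1])=0$ for every $T\in\mathcal{T}$. Taking $T=X$, we obtain $\Hom_{\overline{\mathcal{C}}}(X,X[1])=0$, which directly contradicts the one-dimensionality established in the previous paragraph. Hence $X\notin\mathcal{T}$, proving the lemma.

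There is no real obstacle here; the only thing to be careful about is that Proposition~\ref{desc-arcs} is stated for indecomposable objects not in $\mathcal{D}$, so I would briefly remark that a limit arc with both endpoints in $\mathrm{acc}(M)\subseteq\overline{M}$ does correspond to an indecomposable object of $\overline{\mathcal{C}}$ outside $\mathcal{D}$ (its endpoints are accumulation points of the original $M$, not points added inside some interval $(z_i^-,z_i^+)$), so Proposition~\ref{desc-arcs} applies. Everything else is immediate from the definitions.
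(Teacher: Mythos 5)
Your proposal is correct and follows essentially the same route as the paper: the paper's proof also reduces to the non-vanishing of $\Hom_{\overline{\mathcal{C}}}(X,X[1])$ (obtained there by noting $X[1]=X$, so this Hom-space contains the identity — the same fact underlying case (3) of Proposition~\ref{desc-arcs}) and then contradicts condition (i) of Definition~\ref{defn-cts}.
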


\begin{proof}
Let $\ell_X$ be a limit arc between two accumulation points. We have that $X[1]=X$. This means that $0\neq \Hom(X,X)=\Hom(X,X[1])=\Ext^1(X,X)$. So, $X$ cannot lie in any cluster-tilting subcategory.
\end{proof}

\begin{lemma}~\label{l2} If $\mathcal{T}$ be a cluster-tilting subcategory, then every accumulation point $z$ has exactly one limit arc attached to it.
\end{lemma}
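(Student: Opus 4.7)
The plan splits the lemma into uniqueness (at most one) and existence (at least one), with the first following cleanly from Proposition~\ref{desc-arcs} and the second resting on Lemma~\ref{l1} after ruling out leapfrog convergence at $z$.

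For uniqueness, suppose $X \neq Y$ are two limit arcs in $\mathcal{T}$ both with $z$ as an endpoint. By Lemma~\ref{l0}, the other endpoints must be regular, so $\ell_X = \{z, a\}$ and $\ell_Y = \{z, b\}$ with $a \neq b$ regular; the arcs share exactly the accumulation point $z$, and exactly one of the two rotations about $z$ following the orientation of $S$ carries $\ell_X$ to $\ell_Y$ (the other carries $\ell_Y$ to $\ell_X$). Case $(2)$ of Proposition~\ref{desc-arcs} then yields $\Hom_{\overline{\mathcal{C}}}(X, Y[1]) \neq 0$ or $\Hom_{\overline{\mathcal{C}}}(Y, X[1]) \neq 0$, contradicting condition (i) of cluster-tilting applied to $Y \in \mathcal{T}$ or $X \in \mathcal{T}$.

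For existence, I argue by contradiction: assume no arc of $\mathcal{T}$ has $z$ as an endpoint. First I would verify that $\mathcal{T}$ has no leapfrog convergence at $z$. A leapfrog $\{x_i, y_i\}_{i \geq 0} \subset \mathcal{T}$ would nest arbitrarily closely around $z$ from both sides; the absence of an arc at $z$ combined with functorial finiteness (condition (ii) of cluster-tilting) should produce an obstruction, either by forcing some limit arc $\{z, m\}$ with $m$ threaded inside the nesting to satisfy $\Hom(\mathcal{T}, X[1]) = 0$ (and hence to lie in $\mathcal{T}$), or by exhibiting an object with no finite right (or left) $\mathcal{T}$-approximation. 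Once leapfrog is excluded, Lemma~\ref{l1} supplies a marked point $m$ (regular or accumulation) such that the limit arc $X = \{z, m\}$ crosses no arc of $\mathcal{T}$.

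Finally I check $\Hom_{\overline{\mathcal{C}}}(T, X[1]) = 0$ for every indecomposable $T \in \mathcal{T}$ via the three cases of Proposition~\ref{desc-arcs}: case $(1)$ is ruled out by Lemma~\ref{l1}; case $(2)$ would require a shared accumulation endpoint, and the only accumulation endpoint of $\ell_X$ is $z$, which by assumption is not an endpoint of any arc of $\mathcal{T}$; and case $(3)$ would force $\ell_T = \ell_X$ with both endpoints in $\mathrm{acc}(M)$, which again requires $\ell_T$ to have $z$ as an endpoint. Condition (i) of cluster-tilting then forces $X \in \mathcal{T}$, contradicting the standing assumption. The main difficulty is the leapfrog subcase: Proposition~\ref{desc-arcs} alone does not rule it out, since leapfrog arcs neither cross one another nor share endpoints, so the contradiction there must come from categorical ingredients (functorial finiteness, or a structural result on morphisms into limit arcs) rather than from the arc combinatorics of $(S, \overline{M})$.
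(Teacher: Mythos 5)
Your overall architecture matches the paper's: uniqueness via case $(2)$ of Proposition~\ref{desc-arcs}, then existence by excluding leapfrog convergence at $z$ and invoking Lemma~\ref{l1}. The uniqueness half is fine. But there are two genuine gaps in the existence half.

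The main gap is exactly the step you flag as the main difficulty: you never actually prove that a cluster-tilting subcategory cannot have a leapfrog convergence at $z$. Writing that functorial finiteness ``should produce an obstruction, either by\dots or by\dots'' is a conjecture, not an argument, and your first alternative is in fact not viable: in a leapfrog the arcs $\{x_i,y_i\}$ eventually nest around $z$, so \emph{every} limit arc $\{z,m\}$ crosses infinitely many of them, and no such arc can have vanishing $\Ext$ against $\mathcal{T}$. The second alternative is the one that works, and it needs a concrete construction, which the paper supplies: take a limit arc $\ell_M$ at $z$ crossing infinitely many arcs of $\mathcal{T}$ (possible by the leapfrog); any candidate right $\mathcal{T}$-approximation $T \to M$ has only finitely many indecomposable summands, hence there is a neighborhood $\mathcal{N}$ of $z$ containing no endpoint of an arc of $T$; choosing $K \in \mathcal{T}$ a leapfrog arc straddling $z$ with both endpoints in $\mathcal{N}$, one gets $\Hom(K[-1],M)\neq 0$ (the arcs cross) while $\Hom(K[-1],T)=0$, so no right $\mathcal{T}$-approximation of $M$ exists, contradicting condition (ii). Without this (or an equivalent) construction, the proof is incomplete.

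The second, smaller gap is in your final verification. You correctly note that the point $m$ produced by Lemma~\ref{l1} may itself be an accumulation point, but then assert that ``the only accumulation endpoint of $\ell_X$ is $z$,'' which is false precisely in that case. If $m=z'$ is an accumulation point and $\mathcal{T}$ contains a limit arc at $z'$, then case $(2)$ of Proposition~\ref{desc-arcs} can produce a nonzero $\Hom_{\overline{\mathcal{C}}}(T,X[1])$ for that limit arc $T$, so your claim that $\Hom(\mathcal{T},X[1])=0$ need not hold. The repair is the one the paper uses: because the extension in case $(2)$ is nonzero in only one direction, at least one of $\Hom(\mathcal{T},X[1])$ and $\Hom(X,\mathcal{T}[1])$ vanishes, and the ``if and only if'' chain in Definition~\ref{defn-cts}(i) lets you conclude $X\in\mathcal{T}$ from the vanishing of either side alone, still yielding the desired contradiction (with Lemma~\ref{l0} or with the standing assumption).
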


\begin{proof} Assume that there are at least two different arcs $\ell_X$ and $\ell_Y$ attached to an accumulation point $z$ and assume $\ell_Y$ follows $\ell_X$ counter-clockwise, see Figure~\ref{fig:cts1}. Then there is a nonzero homomorphism between $X$ and $Y[1]$ by Proposition~\ref{desc-arcs}. This means that they cannot both lie in the same cluster-tilting subcategory $\mathcal{T}$. 

\begin{figure}[H]
    \centering
    \includegraphics[scale=0.5]{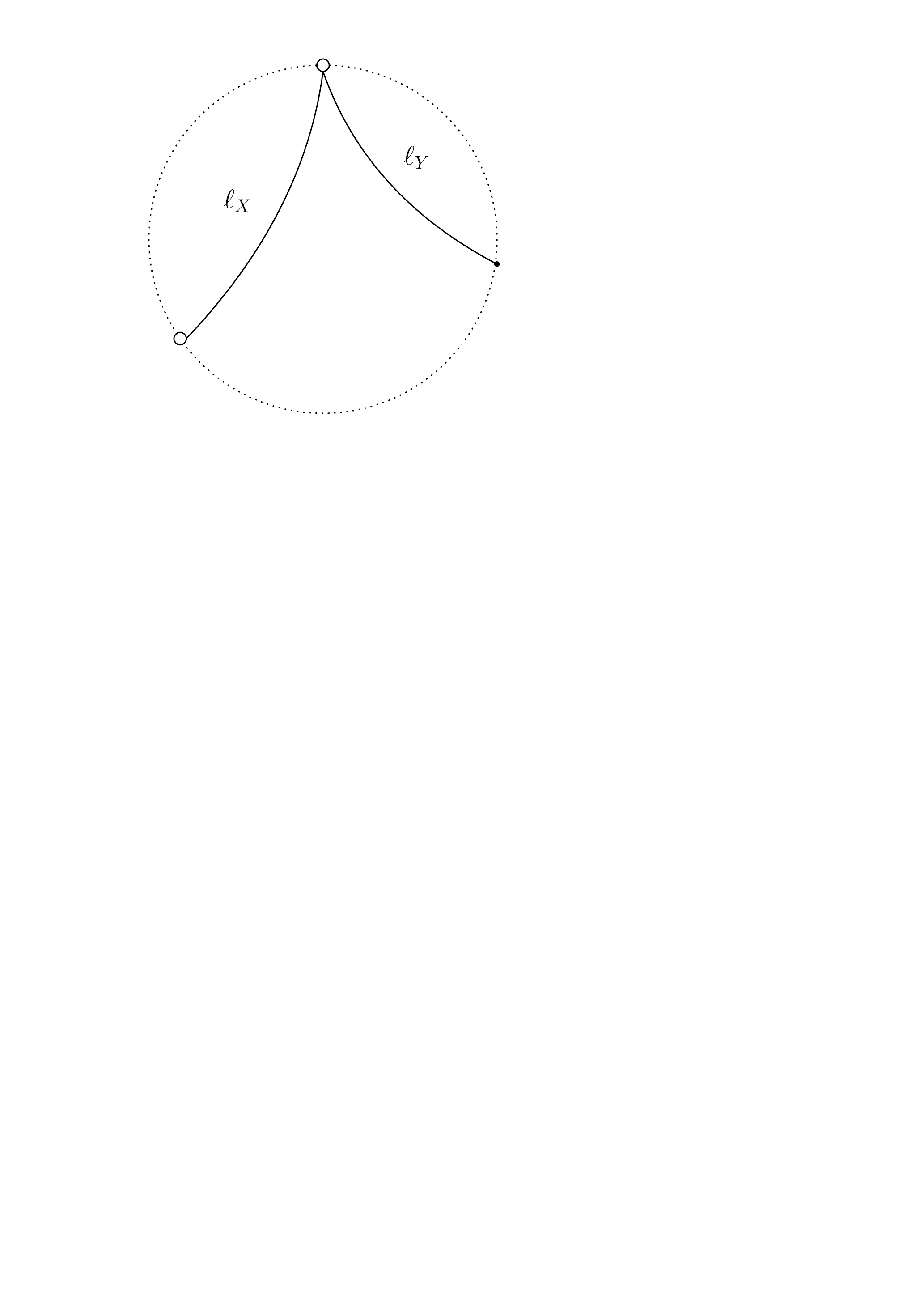}
    \caption{Illustration of two limit arcs.}
    \label{fig:cts1}
\end{figure}

Assume now that there is no arc attached to a given accumulation point $z$. We are going to first show that a cluster-tilting subcategory cannot be formed if there is a leapfrog convergence at $z$. So assume that $\mathcal{T}$ has a leapfrog at $z$. Consider an arc $\ell_M$ for $M\in \overline{\mathcal{C}}$ attached to the accumulation point $z$ such that $\ell_M$ crosses infinitely many other arcs in $\mathcal{T}$. This is possible since we assume there is a leapfrog convergence at $z$. 

We claim that the object $M$ does not have a right $\mathcal{T}$-approximation. Assume the contrary; $T=T^{lf}\oplus T' \to M$ is a right approximation of $M$ where $T^{lf}$ consists of arcs from leapfrog convergence and $T'$ is the complement of $T^{lf}$ in $T$. Since $T$ has a finite number of summands, there is a neighborhood $\mathcal{N}$ around $z$ such that there is no arc from $T$ having an endpoint in $\mathcal{N}$. Take an arbitrary arc $\ell_K[-1]$ from $\mathcal{T}$ such that the endpoints of $\ell_K$ are on different side of $z$ and they are both in $\mathcal{N}$. Note that $\ell_K$ does not cross any arc in $T$, but $\ell_K$ and $\ell_M$ cross. So, this means that there is a map from $K[-1] \to M$ which does not factor through the map $T \to M$.  
This is a contradiction. Thus, a cluster-tilting subcategory cannot come from a collection such that there is a leapfrog convergence at an accumulation point. 

\begin{figure}[H]
    \centering
    \includegraphics[scale=0.5]{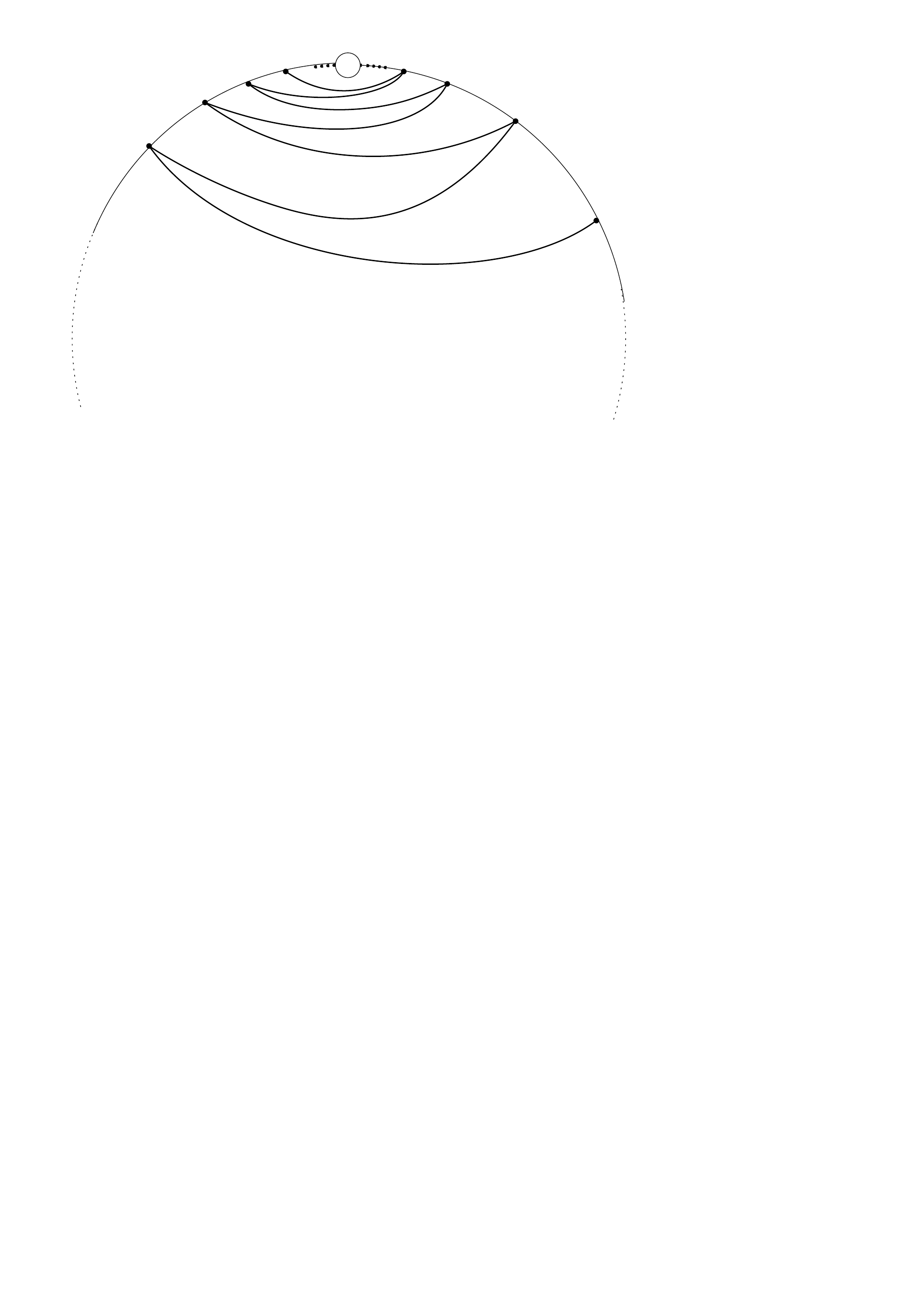}
    \caption{Illustration of a leapfrog convergence at an accumulation point.}
    \label{fig:lf}
\end{figure}

Now we will prove that, in $\mathcal{T}$, if we do not have a leapfrog convergence at an accumulation point $z$ and if there is no arc attached to an accumulation point $z$, then $\mathcal{T}$ cannot be a cluster-tilting subcategory. Lemma~\ref{l1} applies here, i.e. there is a limit arc $\ell_Z=\{z,m\}$ that can be attached to $z$ which will not cross any of the arcs of $\mathcal{T}$.  If $m$ is a regular marked point, that means $\Ext^1(Z,\mathcal{T})=\Ext^1(\mathcal{T},Z)=0$. 
So, by Definition~\ref{defn-cts}, $(i)$, this limit arc should be in $\mathcal{T}$, a contradiction.  If $m$ is an accumulation point and there is no limit arc in $\mathcal{T}$ having $m$ as an endpoint, then the above argument applies to get the same contradiction. If there is (exactly) one limit arc at $m$ in $\mathcal{T}$, then either $\Ext^1(Z,\mathcal{T})= 0$ or $\Ext^1(\mathcal{T},Z)=0$. Again, we get that $Z$ should be in $\mathcal{T}$, a contradiction.
Therefore, we conclude that in a cluster-tilting subcategory, there should be exactly one arc attached to every accumulation point.
\end{proof}

\begin{proof}[Proof of Theorem~\ref{cts}] By Lemma~\ref{l1} and Lemma~\ref{l2} we know that in a cluster-tilting subcategory there is exactly one arc attached to every accumulation point whose other end is a regular marked point. 

Let $z$ be an accumulation point and let $\ell_{X}$ for $X\in \mathcal{T}$ be a limit arc between $z$ and a marked point $b$,  which is a regular marked point.
We claim that $z$ has a two-sided fountain with base $b$. Assume the contrary.
Then either $\mathcal{T}$ does not have a left fountain at $z$ with base $b$, or does not have a right fountain at $z$ with base $b$. We only consider the first case, as the other is treated in a similar way. 

Assume first that $\mathcal{T}$ has a left fountain at $z$ with base $b' \ne b$. Then the arc $\ell_Y = \{z,b'\}$ does not cross the arcs of $\mathcal{T}$ and is not itself in $\mathcal{T}$. See Figure~\ref{prop4} $(i)$. Any right $\mathcal{T}$-approximation $T \to Y$ of $Y$ can only have $X$ and finitely many objects corresponding to arcs from the left fountain at $z$ with base $b'$ as direct summands of $T$. However, if we take an arc $\ell_W$ in that fountain having an endpoint close enough to $z$, then ${\rm Hom}(W,Y) \ne 0$ but ${\rm Hom}(W,T)=0$, a contradiction to the fact that $T \to Y$ is a right $\mathcal{T}$-approximation. Therefore, we may assume that there is no left fountain at $z$ with base point different from $b$. 

Now, assume that some of the arcs having endpoints on the left of $z$ accumulate to an arc $\{z,z'\}$ where $z'$ is another accumulation point. See Figure~\ref{prop4} $(ii)$. Call that infinite family of arcs $F$. In that case, take an arc $\ell_V = \{z',m\}$ where $m$ is on the left of $z$ and close enough to it. Consider the unique limit arc $\ell_U$ of $\mathcal{T}$ having $z'$ as endpoint. A right $\mathcal{T}$-approximation $T \to V$ of $V$ only involves arcs from the family $F$, plus possibly other arcs having both of their endpoints away from $z$. If we take an arc $\ell_W$ in $F$ that is close enough to arc $\{z',z\}$, then ${\rm Hom}(W,V)\ne 0$ while ${\rm Hom}(W,T)=0$, a contradiction. 

Therefore, we are left with the case where any family $\{m_i, n_i\}$ of arcs from $\mathcal{T}$ having the $m_i$ accumulating to $z$ on the left will also have the $n_i$ accumulating to $z$ on the left. In that case, there is an interval $[x,z)$ such that all arcs from $\mathcal{T}$ having an endpoint in $(x,z)$ are entirely in $[x,z)$, and $x$ can be chosen in such a way that there is at least one arc starting at $x$ and ending at a point in $(x,z)$. There is an arc $\{x,x'\}$ in $\mathcal{T}$ where $x'$ lies between $x$ and $z$ and is maximal with respect to this property. 
Similarly, there is an arc $\{x',x''\}$ in $\mathcal{T}$ where $x''$ lies between $x'$ and $z$ and is maximal with respect to this property. Note that $x' \ne x''$ as otherwise, there would be a longer arc than $\{x,x'\}$ starting at $x$ and ending at a marked point in between $x$ and $z$. In that case, we get that $\{x,x''\}$ is an arc of $\mathcal{T}$, a contradiction.

With all of the cases treated, we get that $\mathcal{T}$ has a left fountain at $z$ with base $b$, as wanted. 

\begin{figure}[H]
    \centering
    \includegraphics[scale=0.6]{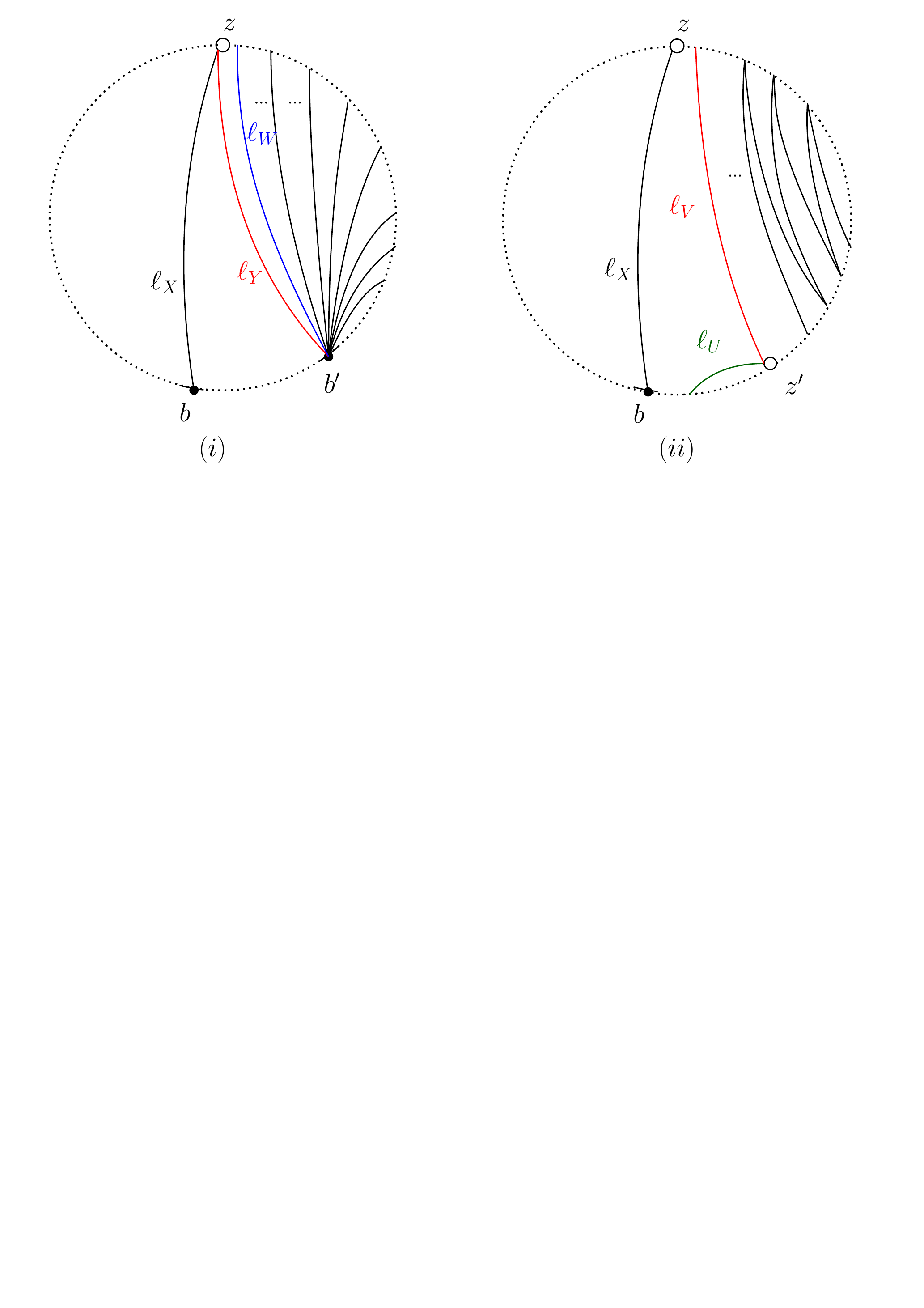}
    \caption{Illustration of the arguments in the Proof of Theorem~\ref{cts}.}
    \label{prop4}
\end{figure}

Finally, we will prove the converse of the statement in the theorem. Assume that we have a subcategory $\mathcal{T}$ which satisfies $(1),(2)$ and $(3)$ in Theorem~\ref{cts}. It is clear that the first condition in the definition of a cluster-tilting subcategory is satisfied. Therefore, we need only to prove that $\mathcal{T}$ is functorially finite. The subcategory $\mathcal{T} \cap \mathcal{C}_{(S,M)}$ is cluster-tilting in $\mathcal{C}_{(S,M)}$ as shown in~\cite{GHJ}. If $\ell_X$ is an ordinary arc, then any $\mathcal{T} \cap \mathcal{C}_{(S,M)}$-approximation of $X$ in $\mathcal{C}_{(S,M)}$ can be completed to a $\mathcal{T}$-approximation of $X$ in $\overline{\mathcal{C}}$ by possibly adding finitely many limit arcs to the approximation thanks to Lemma~\ref{l2}. Remember that we have finitely many accumulation points in our setting and by Lemma~\ref{l2} every accumulation point has one arc attached to it. Therefore, we only need to show that the objects corresponding to limit arcs have both right and left $\mathcal{T}$-approximation.

Take any limit arc $\ell_X$ for $X \in \overline{\mathcal{C}}$ that is not in $\mathcal{T}$. Notice that it crosses infinitely many arcs of $\mathcal{T}$. Let $z$ be an accumulation point such that $\ell_X$ has $z$ as an endpoint. Then $\ell_X$ crosses the arcs of one sided fountain at $z$, and possibly infinitely many arcs of other two-sided fountains. If we discard the arcs in the two-sided fountains in $\mathcal{T}$, we are left with finitely many arcs of $\mathcal{T}$ that have the property of either crossing $\ell_X$ or having a common endpoint with it. Therefore, in order to show that $X$ has $\mathcal{T}$-approximations, it is enough to show that it has $\mathcal{T}'$-approximations where $\mathcal{T}'$ is just the collection of arcs in the two-sided fountains of $\mathcal{T}$ together with its limit arcs. 
Assume that we have $t$ two-sided fountains in $\mathcal{T}$, and let $F_i$ denote the arcs of the $i$-th such fountain. Let $\ell_{A_i}$ denote the limit arc in that fountain, $\ell_{B_i}, \ell_{C_i}$ the furthest arc from the accumulation point in that fountain on the left and on the right, respectively. Note that $\mathcal{T}'$ is the additive closure of the $F_i$ and the $A_i$.  Assume that the fountain at $z$ is $F_1$. We may assume that $\ell_X$ crosses the arcs of $F_1$ that are accumulating to the left of $z$. In this case, any non-zero morphism from an object of $F_1 \cup A_1$ to $X$ has to factor through $A_1$; and any non-zero morphism from $X$ to an object of $F_1 \cup A_1$ has to factor through $B_1$. If $\ell_X$ has another accumulation point $z'$ as an endpoint, then let us call the two-sided fountain at $z'$ by $F_2$. Then $\ell_X$ crosses the arcs of $F_2$ that are accumulating to the left of $z'$. We get a similar analysis for the morphism to or from $X$ and starting from or to $F_2 \cup A_2$. If $\ell_X$ crosses an entire fountain $F_i$, then any non-zero morphism from an object of $F_i \cup A_i$ to $X$ has to factor through $C_i$; and any non-zero morphism from $X$ to an object of $F_i \cup A_i$ has to factor through $B_i$. Doing this for each fountain gives the wanted approximations for $X$.

\begin{figure}[H]
    \centering
    \includegraphics[scale=0.6]{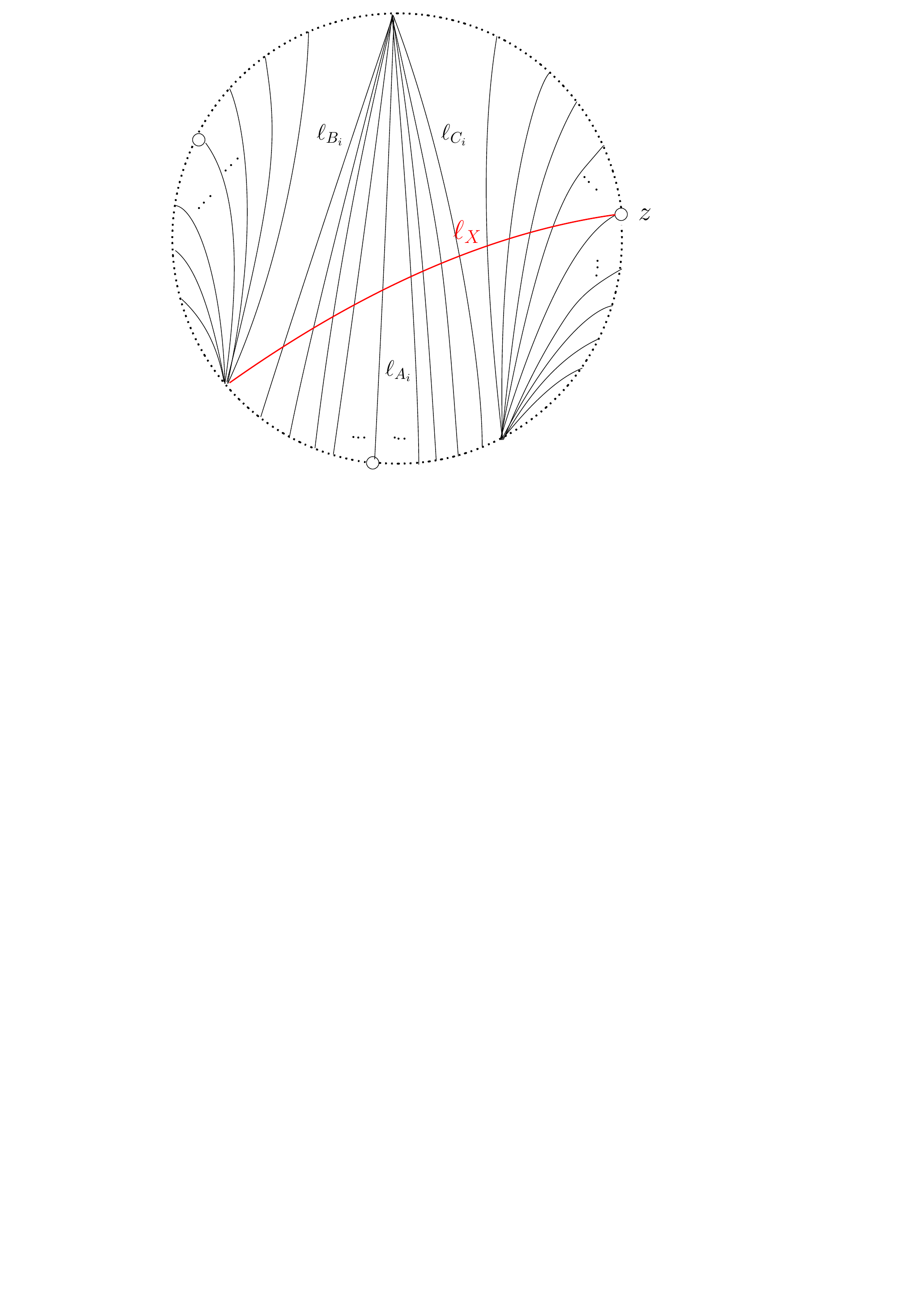}
    \caption{Illustration of the argument in the Proof of Theorem~\ref{cts}.}
    \label{fountains}
\end{figure}
\end{proof}

\section{modules over $\mathcal{T}$}

Let us assume that $\mathcal{A}$ is a Hom-finite Krull-Schmidt triangulated $k$-category, and that $\mathcal{T}$ is a given cluster-tilting subcategory. Given an object $M$ of $\mathcal{A}$, we denote by $\Hom(-, M)|_{\mathcal{T}}$, or simply by $\Hom(-,M)$, the contravariant functor from $\mathcal{T}$ to $\mmod k$ that takes $T \in \mathcal{T}$ to $\Hom_{\mathcal{A}}(T,M)$. We denote by ${\rm Mod}\mathcal{T}$ the category of all contravariant $k$-linear functors from $\mathcal{T}$ to $\mmod k$ and by $\mmod^{\rm fp}\mathcal{T}$ the subcategory of the finitely presented ones. The following theorem can be derived from Corollary 4.4 of~\cite{KZ}.

\begin{theorem} \label{EquivCategories}The functor $\varphi: \mathcal{A} \to {\rm Mod}\mathcal{T}$ that takes $M$ to $\Hom(-,M)$ induces an equivalence
\[\mathcal{A}/{\mathcal{T}[1]}\ \cong \mathrm{mod}^{fp}\ \mathcal{T}\]
\end{theorem}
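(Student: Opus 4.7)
The plan is to apply the standard Koenig--Zhu strategy, whose only hypothesis is that $\mathcal{T}$ is a functorially finite cluster-tilting subcategory of the Hom-finite Krull--Schmidt triangulated category $\mathcal{A}$. The whole proof hinges on producing, for each object $M \in \mathcal{A}$, a distinguished triangle
$$T_1 \longrightarrow T_0 \longrightarrow M \longrightarrow T_1[1]$$
with $T_0, T_1 \in \mathcal{T}$. I would obtain this by first choosing a right $\mathcal{T}$-approximation $T_0 \to M$ (available by contravariant finiteness), completing it to a triangle with third term $T_1[1]$, and then showing $T_1 \in \mathcal{T}$: the approximation property forces $\Hom(T, T_1[1]) = 0$ for every $T \in \mathcal{T}$, and the cluster-tilting axiom (Definition \ref{defn-cts}(i)) then concludes.

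With this triangle in hand, essential surjectivity onto $\mathrm{mod}^{\rm fp}\mathcal{T}$ follows easily. Applying $\Hom(-, -)|_\mathcal{T}$ to the triangle and using $\Hom(\mathcal{T}, T_1[1]) = 0$ yields an exact sequence
$$\Hom(-, T_1)|_\mathcal{T} \to \Hom(-, T_0)|_\mathcal{T} \to \varphi(M) \to 0,$$
so $\varphi(M)$ is finitely presented. Conversely, any finitely presented $\mathcal{T}$-module has a presentation by representables; by Yoneda its first arrow lifts to some $T_1 \to T_0$ in $\mathcal{T}$, and completing to a triangle produces an $M$ with $\varphi(M)$ isomorphic to the given module.

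The equivalence on morphisms splits into computing $\ker \varphi$ and proving fullness. For the kernel: if $\varphi(f) = 0$ for $f: M \to N$, then in particular $f$ kills the structural map $T_0 \to M$, hence factors through the cone $T_1[1] \in \mathcal{T}[1]$; conversely, any morphism factoring through $T[1]$ with $T \in \mathcal{T}$ is annihilated by $\varphi$ since $\Hom(\mathcal{T}, \mathcal{T}[1]) = 0$. For fullness: given a natural transformation $\eta: \varphi(M) \to \varphi(N)$, set $g_0 := \eta_{T_0}(T_0 \to M) \in \Hom(T_0, N)$; because the composition $T_1 \to T_0 \to M$ vanishes in $\mathcal{A}$, naturality of $\eta$ gives $g_0 \circ (T_1 \to T_0) = 0$, and the triangle axiom then lifts $g_0$ to some $f: M \to N$ with $f \circ (T_0 \to M) = g_0$. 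A final application of naturality, combined with the fact that every $g: T \to M$ with $T \in \mathcal{T}$ factors through the approximation $T_0 \to M$, forces $\varphi(f) = \eta$.

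The main obstacle is the initial step: showing $T_1 \in \mathcal{T}$ is where the interplay between the approximation property and the cluster-tilting axiom really does the work. Once that triangle is available, the remaining steps are routine diagram chases relying on the single vanishing $\Hom(\mathcal{T}, \mathcal{T}[1]) = 0$ and Yoneda, and the result is exactly the content of \cite[Corollary 4.4]{KZ}.
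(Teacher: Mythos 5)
Your proposal is correct and is precisely the standard Koenig--Zhu argument that the paper itself invokes by citing \cite[Corollary 4.4]{KZ} rather than writing out a proof. All the key steps check out: the long exact sequence argument showing $T_1\in\mathcal{T}$, essential surjectivity via the presentation $\Hom(-,T_1)\to\Hom(-,T_0)\to\varphi(M)\to 0$, and the identification of $\ker\varphi$ with the ideal of morphisms factoring through $\mathcal{T}[1]$ together with fullness.
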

As a consequence, we get the following, which can also be found in~\cite{KZ}.

\begin{corollary}
The category $\mathrm{mod}^{fp}\mathcal{T}$ is abelian.
\end{corollary}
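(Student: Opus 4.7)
The plan is to deduce this directly from Theorem~\ref{EquivCategories}. Being an abelian category is invariant under equivalence, so it suffices to equip $\mathcal{A}/\mathcal{T}[1]$ with an abelian structure and transport it across $\varphi$. This is precisely one of the main outputs of the Koenig--Zhu machinery in~\cite{KZ}: for any Hom-finite Krull--Schmidt triangulated category together with a cluster-tilting subcategory $\mathcal{T}$, the additive quotient by $\mathcal{T}[1]$ carries a canonical abelian structure whose short exact sequences come from the triangles of $\mathcal{A}$. Once this is in hand, the corollary is immediate.

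If one instead wanted to argue intrinsically inside $\mathrm{Mod}\,\mathcal{T}$, the strategy would be to verify closure of $\mathrm{mod}^{fp}\,\mathcal{T}$ under kernels and cokernels inside the abelian functor category $\mathrm{Mod}\,\mathcal{T}$. Cokernels are essentially formal: given a morphism $\alpha: F \to G$ with presentations $\Hom(-,T_1) \to \Hom(-,T_0) \to F \to 0$ and $\Hom(-,S_1) \to \Hom(-,S_0) \to G \to 0$, one lifts $\alpha$ via Yoneda to a map $T_0 \to S_0$, forms the induced map of presentations, and reads off a finite presentation for $\mathrm{coker}(\alpha)$ from $S_0 \oplus T_0 \to S_0$ together with the relations from $S_1$. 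For kernels one needs the cluster-tilting hypothesis: completing the lifted map into a triangle in $\mathcal{A}$, one uses the existence of right $\mathcal{T}$-approximations to extract a finite presentation for $\ker(\alpha)$ from the long exact sequence of $\Hom(-,{-})|_\mathcal{T}$ applied to that triangle.

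I expect the kernel step to be the main obstacle in the intrinsic approach, since without cluster-tilting the kernel of a morphism of finitely presented $\mathcal{T}$-modules need only be finitely generated, not finitely presented. Routing the proof through Theorem~\ref{EquivCategories} and \cite{KZ} avoids this obstacle entirely by doing the homological bookkeeping at the level of triangles in $\mathcal{A}$, where the axioms of a triangulated category together with the approximation properties built into the definition of cluster-tilting supply the required kernels and cokernels for free.
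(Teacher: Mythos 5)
Your primary argument is exactly the paper's: the corollary is stated there as an immediate consequence of Theorem~\ref{EquivCategories} together with the Koenig--Zhu result (Corollary 4.4 of \cite{KZ}), which endows $\mathcal{A}/\mathcal{T}[1]$ with an abelian structure that transports across the equivalence $\varphi$. The intrinsic sketch you add is a reasonable bonus, but the route through \cite{KZ} is what the paper intends, so the proposal is correct and matches.
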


It is worth noting that it is not generally true that the subcategory of finitely presented functors in a functor category is abelian. We will apply these results in the case where $\mathcal{A} = \overline{\mathcal{C}}$.

\section{Cluster characters}

In this section, we assume that $\mathcal{A}$ is a Hom-finite triangulated Krull-Schmidt $k$-category and that $\mathcal{T}$ is a cluster-tilting subcategory. Cluster characters are well understood in the cases where $\mathcal{T}$ is the additive closure of an object (so corresponds to a cluster-tilting object); see for instance \cite{P,Pl}. Not much work has been done to define cluster characters for general cluster-tilting subcategories. We mention the work of Palu and J\o rgensen \cite{JP}, where the cluster character is defined on a subcategory whose objects correspond to finite dimensional representations through the equivalence of Theorem \ref{EquivCategories}.

We start with some definitions needed to define our cluster character formula. Let $\mathcal{T}_0$ be a complete set of representatives of indecomposable objects of $\mathcal{T}$. For a set $S$ of non-zero morphisms between objects of $V \subseteq \mathcal{T}_0$, consider the quiver $\mathcal{G}(\mathcal{T}, S)$ whose vertex set is $V$ and where there is an arrow $X \to Y$ if and only if there is a morphism from $X$ to $Y$ in $S$. 

Let $F: \mathcal{T} \to {\rm mod}k$ be a $k$-linear (contravariant) functor (also called representation of $\mathcal{T}$, or module over $\mathcal{T}$). A morphism $f: L \to N$ in $\mathcal{T}$ is called an \emph{$F$-isomorphism} if $F(f)$ is an isomorphism. For $F = \Hom_{\mathcal{A}}(-,M)$, we will sometimes use $M$-isomorphism for short. We say that $F$ is \emph{$S$-uniform} if $F(f)$ is an isomorphism for all but finitely many $f \in S$. For a (possibly infinite) quiver $Q$, we say that $Q$ is \emph{finitely co-generated} if there are finitely many vertices $v_1, \ldots, v_r$ such that for any given vertex $v$ of $Q$, there is a finite path $v \rightsquigarrow v_i$ for some $i$. Note that in this case, $Q$ necessarily has finitely many sink vertices.
Finally, we say that the category $\mathcal{T}$ is \emph{quasi-bounded} if there exists a set $S$ of morphisms between objects in $V \subseteq \mathcal{T}_0$ with $\mathcal{G}(\mathcal{T}, S)$ finitely co-generated such that all the representable contravariant functors are $S$-uniform and supported on finitely many vertices in $\mathcal{T}_0 \backslash V$. Although these conditions look technical, we will see that these are the conditions needed to define a cluster characters; see below.

Assume that $\mathcal{T}$ is quasi-bounded for a set of morphisms $S$. Given a representation $F$ of $\mathcal{T}$, its \emph{dimension vector} is the element $({\rm dim}_kF(x))_{x \in \mathcal{T}_0}$ in $(\mathbb{Z}_{\ge 0})^{\mathcal{T}_0}$. Take $M$ in $\mathcal{T}$ and let $\Hom(-,M)$ be the corresponding projective representation. Being $S$-uniform implies that the dimension vector of $\Hom(-,M)$ contains finitely many values, and in particular is bounded.
For $\mathcal{T}$ to be quasi-bounded for a set of morphisms $S$, it is sufficient to check that $\mathcal{G}(\mathcal{T}, S)$ is finitely co-generated and all indecomposable projective representations are $S$-uniform (so the functors $\Hom(-,M)$ where $M$ lies in $\mathcal{T}_0$ are all $S$-uniform) and are supported on finitely many vertices of $\mathcal{T}_0 \backslash V$. This follows from $\mathcal{A}$ being Krull-Schmidt.

\begin{lemma} Let $\mathcal{T}$ be cluster-tilting in $\overline{\mathcal{C}}$. Then there is a set $S$ of morphisms in $\overline{\mathcal{C}}$ that makes $\mathcal{T}$ quasi-bounded.
\end{lemma}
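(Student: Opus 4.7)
The plan is to exploit Theorem~\ref{cts} to describe $\mathcal{T}$ explicitly and then build $S$ out of the fountain data. By that theorem, $\mathcal{T}$ decomposes into a maximal non-crossing collection of ordinary arcs together with one limit arc $\{b_i,z_i\}$ at each of the finitely many accumulation points $z_1,\ldots,z_r$, and each $z_i$ carries a two-sided fountain based at a regular marked point $b_i$.

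To begin, I would set $V = \mathcal{T}_0$, which makes the requirement that representable functors be supported on finitely many vertices of $\mathcal{T}_0\setminus V$ vacuous; only the other two conditions remain. For $S$, the natural candidate is the collection of non-zero morphisms between pairs of fountain arcs sharing a base $b_i$, including the morphisms to and from the limit arc $\{b_i,z_i\}$, together with finitely many additional morphisms linking the non-fountain arcs of $\mathcal{T}$. The cyclic rotation criterion for $\Hom$-spaces of arcs that share an endpoint (as used in the description of morphisms between similar arcs) arranges these morphisms so that, inside each fountain, there is a unique outermost arc which is a sink, and every other fountain arc has a path of length at most two to that sink (passing through the limit arc if necessary).

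With this choice, $\mathcal{G}(\mathcal{T},S)$ is finitely co-generated by the finitely many outermost fountain arcs plus the non-fountain arcs of $\mathcal{T}$. For $S$-uniformity, fix $M\in\mathcal{T}_0$ and consider the projective functor $\Hom_{\overline{\mathcal{C}}}(-,M)$ restricted to the fountain based at $b_i$. The description of $\Hom$-spaces between arcs sharing an endpoint, combined with the fact that $\ell_M$ has only two endpoints and can therefore interact with the fountain sequence only finitely often, shows that $\dim_k\Hom_{\overline{\mathcal{C}}}(\{b_i,x_k^\pm\},M)$ is eventually constant; it equals $1$ when $b_i\in\ell_M$ in the correct rotation direction and $0$ otherwise. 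On that cofinite tail, morphisms in $S$ between the corresponding one-dimensional or zero spaces are necessarily non-zero by Lemma~\ref{FactorizationLemma}, hence isomorphisms, which yields $S$-uniformity.

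The main technical point will be verifying that the non-fountain arcs of a cluster-tilting $\mathcal{T}$ emanate only finitely often from each non-fountain-base marked point, so that the sinks of $\mathcal{G}(\mathcal{T},S)$ really do form a finite set. This boils down to local finiteness of maximal non-crossing collections away from the fountain bases and should be extracted directly from the geometric description in Theorem~\ref{cts}, handling case by case arcs whose endpoints are regular marked points, fountain bases, or on limit arcs.
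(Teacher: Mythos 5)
Your starting point (read off the structure of $\mathcal{T}$ from Theorem~\ref{cts} and build $S$ from the fountains) matches the paper, but the choice $V=\mathcal{T}_0$ opens a gap that your argument cannot close. The clause in the definition of quasi-bounded that you render vacuous exists precisely because a cluster-tilting subcategory may contain \emph{infinitely many} arcs that are neither fountain arcs nor limit arcs: each polygonal region cut out by two consecutive fountain arcs $\{b_i,x_j\}$, $\{b_i,x_{j+1}\}$ must itself be triangulated by arcs of $\mathcal{T}$ whenever it contains further marked points, and this happens for infinitely many $j$ as soon as the fountain skips points (base $b$, fountain arcs $\{b,p_{2j}\}$, forcing the arcs $\{p_{2j},p_{2j+2}\}$ for all $j$). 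Your closing ``local finiteness'' claim --- finitely many arcs of $\mathcal{T}$ at each non-base marked point --- is true but does not bound the total number of these trapped arcs, since there are infinitely many marked points. With $V=\mathcal{T}_0$ and only finitely many morphisms touching them, all but finitely many trapped arcs are isolated vertices of $\mathcal{G}(\mathcal{T},S)$, so the quiver is not finitely co-generated. If you instead adjoin infinitely many morphisms joining the trapped arcs to the fountains, $S$-uniformity fails: for $F=\Hom(-,M)$ with $M$ the limit-arc object at $z_i$ (or any arc based at $b_i$ in the right position), $F$ is eventually $k$ along one side of the fountain while $F$ vanishes on all but finitely many trapped arcs (any fixed arc crosses only finitely many of them), so infinitely many of the adjoined morphisms induce maps between $k$ and $0$. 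The paper's proof takes $V$ to be only the fountain and limit arcs, places the trapped arcs in $\mathcal{T}_0\setminus V$, and then verifies the support condition you discarded.

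A second, more local, problem: you place in $S$ \emph{all} non-zero morphisms between fountain arcs with common base, rather than the irreducible ones. For a representable whose dimension jumps at some place along the fountain, every composite spanning that jump fails to be an isomorphism, and there are infinitely many such composites; so this larger $S$ again destroys $S$-uniformity. One must take irreducible morphisms between consecutive fountain arcs, supplemented --- as in the paper --- by morphisms from a cofinal family of fountain arcs into the limit arc, which are needed because on the side converging to $z_i$ the irreducible morphisms point toward the accumulation point and would otherwise admit no finite co-generating set.
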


\begin{proof} Let $F_1, \ldots, F_r$ denote the set of all two-sided fountains for $\mathcal{T}$. For each $i$, let $m_i$ denote the base point of the fountain $F_i$, $a_i$ the accumulation point and $\alpha_i$ the accumulation arc from $m_i$ to $a_i$. For each $i$, take a neighborhood $N_i$ of $a_i$ that is not containing any other accumulation point.  For each $i$, take $V_i$ the set of all arcs from $\mathcal{T}$ between $m_i$ and a point in $N_i$. Note that the arcs in $V_i$ are totally ordered, using the orientation of the surface, and there is a minimal and maximal element, and each arc has an immediate successor and immediate predecessor. Take $V$ the union of all $V_i$ together with the limit arcs $\{\alpha_1, \ldots, \alpha_r\}$. Now, the remaining arcs of $\mathcal{T}\backslash V$ are either arcs contained in a region formed by two adjacent arcs from some $V_i$, or finitely many other arcs. Let $W$ be the arcs of the first (possibly infinite) family and let $X$ be the finitely many other arcs. This leads to a partition $\{V, W, X\}$ of $\mathcal{T}$. Observe that any given arc of $(S, \overline{M})$ intersects finitely many arcs from $W$. For our morphism set $S$, we take all irreducible morphisms (up to scaling) in $\mathcal{T}$ between objects from $V$ plus the following additional morphisms. For each fountain $F_i$, take a sequence $\alpha_{ij}$ of arcs in $V$ from $m_i$ to $x_{ij}$ where $\{x_{ij}\}$ converges to $a_i$ on the left. For each $j$, take a corresponding non-zero morphism $f_{ij}: M(\alpha_{ij}) \to M(\alpha_i)$. We need to check that $\mathcal{G}(\mathcal{T}, S)$ is finitely co-generated. For each $V_i$, denote by $\beta_i$ the rightmost arc of $V_i$, that is, the maximal element in $V_i$. Now, for each arc of $V_i$, there is a finite sequence of morphisms from that arc to either $\alpha_i$ or $\beta_i$ (depending on what side that arc is from $a_i$). This proves that the quiver $\mathcal{G}(\mathcal{T}, S)$ is finitely co-generated. Now, one can check that any indecomposable object $M$ of $\overline{\mathcal{C}}$ is such that $\Hom(-,M)$ is $S$-uniform and is supported on finitely many arcs from $W$.
\end{proof}

Given a representation $F$, we let ${\rm supp}_S(F)$ be the subset of $S$ of those morphisms $s$ with $F(s)$ an isomorphism. Observe that $F$ is $S$-uniform if and only if ${\rm supp}_S(F)$ is co-finite in $S$. For the rest of this section, we assume that $\mathcal{T}$ is cluster-tilting and is quasi-bounded for a set of morphisms $S$ between vertices of $V \subseteq \mathcal{T}_0$.  We start with the following lemma.
\begin{lemma} \label{LemmaGeneratingSet}  
Let $F$ be a representation of $\mathcal{T}$ and let $U \subseteq \mathcal{T}_0$ such that for any $v \in \mathcal{T}_0$ with $F(v) \ne 0$, there is a finite path in ${\rm supp}_S(F)$ from $v$ to a vertex in $U$. Then $\oplus_{u \in U}F(u)$ generates $F$.
\end{lemma}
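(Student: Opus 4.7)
The plan is to unpack the definition of ``generates'' and then observe that the hypothesis supplies, for each $v$ in the support of $F$, a single morphism $v \to u$ with $u \in U$ whose image under $F$ is an isomorphism.

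First, I would fix the precise meaning: to say that $\bigoplus_{u \in U} F(u)$ generates $F$ amounts to saying that the subfunctor $G \subseteq F$ defined by
$$G(v) = \sum_{u \in U,\, f \colon v \to u} F(f)\bigl(F(u)\bigr)$$
coincides with $F$, or equivalently that the canonical map $\bigoplus_{u \in U} F(u) \otimes_k \Hom_{\mathcal{T}}(-, u) \to F$ is surjective. So I need to produce, for every $v \in \mathcal{T}_0$ and every $x \in F(v)$, a vertex $u \in U$, a morphism $f \colon v \to u$ in $\mathcal{T}$, and an element $y \in F(u)$ with $x = F(f)(y)$.

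Second, if $F(v) = 0$ there is nothing to check. Otherwise, by hypothesis pick a finite path $v = w_0 \xrightarrow{s_1} w_1 \xrightarrow{s_2} \cdots \xrightarrow{s_n} w_n = u$ in ${\rm supp}_S(F)$ with $u \in U$; by definition of ${\rm supp}_S(F)$, each $s_i$ lies in $\mathcal{T}$ and $F(s_i)$ is an isomorphism. Setting $s := s_n \circ \cdots \circ s_1 \colon v \to u$ and using that $F$ is contravariant, I get
$$F(s) = F(s_1) \circ F(s_2) \circ \cdots \circ F(s_n) \colon F(u) \longrightarrow F(v),$$
a composition of isomorphisms, hence itself an isomorphism. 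Consequently any $x \in F(v)$ can be written as $F(s)(y)$ for a unique $y \in F(u)$, which places $F(v)$ inside the subfunctor generated by $F(u)$, and \emph{a fortiori} inside the one generated by $\bigoplus_{u' \in U} F(u')$. Running this argument over every $v \in \mathcal{T}_0$ yields the conclusion.

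I do not expect any serious obstacle in this argument; it is essentially a bookkeeping exercise. The one point to handle with care is the contravariance of $F$ together with the direction of the arrows in $\mathcal{G}(\mathcal{T}, S)$: the path goes \emph{from} $v$ \emph{to} $u$ in the quiver, yet after applying $F$ the composition produces a map \emph{from} $F(u)$ \emph{to} $F(v)$, which is exactly what is needed to ``push'' generators in $F(u)$ down to every element of $F(v)$.
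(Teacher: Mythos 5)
Your argument is correct and is essentially the paper's own proof: take a finite path in ${\rm supp}_S(F)$ from $v$ to some $u\in U$, note that contravariance turns it into an isomorphism $F(u)\to F(v)$, and conclude that $F(u)$ generates $F(v)$. The paper's version is just a more terse statement of the same reasoning.
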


\begin{proof} Let $v$ be a vertex in $\mathcal{T}_0$ with $F(v)$ non-zero. Take a finite path $p$ in ${\rm supp}_S(F)$ from $v$ to $u \in U$. Since $F(p): F(u) \to F(v)$ is an isomorphism, elements in $F(u)$ generate the elements in $F(v)$.
\end{proof}

\begin{lemma}  \label{lemmaUniform}
If $F$ is finitely presented, then $F$ is $S$-uniform and supported on finitely many vertices of $\mathcal{T}_0 \backslash V$. Conversely, if $F$ is $S$-uniform and supported on finitely many vertices of $\mathcal{T}_0 \backslash V$, then $F$ is finitely generated.
\end{lemma}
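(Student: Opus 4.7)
The plan is to treat the two implications separately. For the forward direction, start from a finite presentation
\[ \Hom(-,T_1) \to \Hom(-,T_0) \to F \to 0 \]
with $T_0, T_1 \in \mathcal{T}$. Since $\overline{\mathcal{C}}$ is Krull-Schmidt, each $T_i$ decomposes as a finite direct sum of objects from $\mathcal{T}_0$, and quasi-boundedness then gives that $\Hom(-,T_0)$ and $\Hom(-,T_1)$ are both $S$-uniform and supported on finitely many vertices of $\mathcal{T}_0 \setminus V$. The support of the pointwise cokernel $F$ is contained in that of $\Hom(-,T_0)$, and for any $s \in S$ at which both $\Hom(s,T_0)$ and $\Hom(s,T_1)$ are isomorphisms, the induced map $F(s)$ on cokernels is an isomorphism as well. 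Removing the finitely many bad arrows for the two projectives yields $S$-uniformity of $F$. This direction is essentially bookkeeping with cokernels.

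For the converse, the strategy is to construct a finite set $U \subseteq \mathcal{T}_0$ verifying the hypothesis of Lemma~\ref{LemmaGeneratingSet}. Once $U$ is found, $\bigoplus_{u \in U} F(u)$ generates $F$, and the finite-dimensionality of each $F(u)$ (as a $k$-vector space) yields a surjection from a finitely generated representable onto $F$ by Yoneda, proving that $F$ is finitely generated. Let $S_F := S \setminus {\rm supp}_S(F)$, a finite set by $S$-uniformity, let $W$ be the finite set of vertices of $\mathcal{T}_0 \setminus V$ where $F$ is non-zero, and let $v_1, \ldots, v_r$ be the cogenerators witnessing finite co-generation of $\mathcal{G}(\mathcal{T}, S)$. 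I propose
\[ U := W \cup \{v_1, \ldots, v_r\} \cup \{\mathrm{source}(s) : s \in S_F\}, \]
which is finite.

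For any $v \in \mathcal{T}_0$ with $F(v) \ne 0$, I need a finite path in ${\rm supp}_S(F)$ from $v$ to $U$. If $v \notin V$ then $v \in W \subseteq U$ and the empty path works. If $v \in V$, pick any finite path in $\mathcal{G}(\mathcal{T}, S)$ from $v$ to some $v_i$, and truncate at either the source of the first arrow belonging to $S_F$ or, if no such arrow occurs, at $v_i$ itself; in either case the endpoint lies in $U$. Contravariant functoriality of $F$ along arrows in ${\rm supp}_S(F)$ propagates non-vanishing of $F$ forward along the truncated path, so the path is legitimate for Lemma~\ref{LemmaGeneratingSet}. The main obstacle is exactly this combinatorial step: ensuring that a truncated path lands in a prescribed finite set \emph{while} staying inside ${\rm supp}_S(F)$, which is what forces us to include the sources of arrows in $S_F$ in $U$. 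Everything else---cokernel inheritance of uniformity, the finite decomposition of $T_0, T_1$, and the passage from a generating family to a finitely generated projective cover---is routine.
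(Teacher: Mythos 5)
Your proposal is correct and follows essentially the same route as the paper: the forward direction is the same cokernel/diagram-chase inheritance of $S$-uniformity from the two projective terms of a finite presentation, and the converse builds exactly the paper's finite set $U$ (cogenerators, sources of the finitely many arrows outside ${\rm supp}_S(F)$, and the finite support outside $V$) and then invokes Lemma~\ref{LemmaGeneratingSet}. Your explicit truncation argument for the paths is a slightly more detailed justification of a step the paper leaves implicit, but it is the same idea.
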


\begin{proof} Consider a finite presentation
$$P_1 \to P_0 \to F \to 0.$$
Let $S' = {\rm supp}_S(P_0) \cap {\rm supp}_S(P_1)$, which is co-finite in $S$. Let $s: X \to Y \in S'$ and consider the diagram
$$\xymatrix{P_1(Y) \ar[d]^{P_1(s)} \ar[r] & P_0(Y) \ar[d]^{P_0(s)} \ar[r] & F(Y) \ar[d]^{F(s)} \ar[r] & 0 \\
P_1(X) \ar[r] & P_0(X) \ar[r] & F(X) \ar[r] & 0}$$
where the rows are exact and the first two vertical morphisms are isomorphisms. Diagram chasing shows that $F(s)$ is an isomorphism. Therefore, $S' \subseteq {\rm supp}_S(F)$, showing that $F$ is $S$-uniform. The fact that $F$ is supported on finitely many vertices of $\mathcal{T}_0 \backslash V$ follows from the same property for $P_0$. Conversely, assume that $F$ is $S$-uniform and supported on finitely many vertices of $\mathcal{T}_0 \backslash V$. Notice that ${\rm supp}_S(F)$ is co-finite in $S$. Since $\mathcal{G}(\mathcal{T}, S)$ is finitely co-generated, there are finitely many vertices $v_1, \ldots, v_r$ in $\mathcal{G}(\mathcal{T}, S)$ such that for any given vertex $v$, there is a finite path from $v$ to one of these vertices. Let $S''$ be the set of initial vertices of the arrows in $S \backslash {\rm supp}_S(F)$. Then the finite set $U:=\{v_1, \ldots, v_r\}\cup S''$ of vertices is such that for any vertex $v$ of $\mathcal{G}(\mathcal{T},S)$, there is a finite path in ${\rm supp}_S(F)$ from $v$ to a vertex in $U$. Take $U'$ the finite set of vertices in $\mathcal{T}_0 \backslash V$ supporting $F$. Now,  it follows from Lemma \ref{LemmaGeneratingSet} that $\oplus_{v \in U \cup U'}F(v)$ generates $F$.
\end{proof}

Note that if $F$ is finitely presented of dimension vector $f$, then an $f$-dimensional representation need not be finitely presented. However, we have the following.

\begin{lemma} \label{lemmafinitelypresented} Let $F$ be finitely presented with dimension vector $f$. Let $g$ be the dimension vector of a finitely presented sub-representation $G$ of $F$. Then
\begin{enumerate}[$(1)$]
    \item All sub-representations of $F$ of dimension vector $g$ are finitely presented.
\item There are finitely many objects $M_1, \ldots, M_t$ in $\mathcal{T}_0$ such that for any sub-representation $G'$ of dimension vector $g$, $G'$ is generated by $G(M)$ where $M = M_1 \oplus \cdots \oplus M_t$.
\item There are finitely generated projective modules $P_0, P_1$ such that for any sub-representation $G'$ of dimension vector $g$, $G'$ admits a projective presentation $P_1 \to P_0 \to G'$. \end{enumerate}
\end{lemma}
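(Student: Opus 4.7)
The plan is to reduce all three parts to the characterization of finitely presented representations given by Lemma~\ref{lemmaUniform}, by showing that every sub-representation $G' \subseteq F$ of dimension vector $g$ inherits uniform finiteness properties from the pair $(F,g)$. The key observation is that for any arrow $s \colon X \to Y$ in $S$ with $F(s)$ an isomorphism and $g_X = g_Y$, the restriction $G'(s)$ is automatically an isomorphism, since it is injective (being a restriction of the injection $F(s)$) between vector spaces of the same dimension. Because $F$ is $S$-uniform and the distinguished sub-representation $G$ is $S$-uniform (both are finitely presented, so Lemma~\ref{lemmaUniform} applies), the set $E \subseteq S$ of arrows violating one of these two conditions is finite and depends only on $(F,g)$; likewise, the support of $G'$ outside $V$ is contained in the finite support of $F$ outside $V$.

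For part (1), the observation above shows that every such $G'$ is $S$-uniform and has finite support outside $V$, hence is finitely generated by the converse half of Lemma~\ref{lemmaUniform}. Choosing an epimorphism $P_0 \twoheadrightarrow G'$ from a finitely generated projective with kernel $K$, a snake-lemma argument shows $K(s)$ is an isomorphism whenever both $P_0(s)$ and $G'(s)$ are; thus $K$ is $S$-uniform, and its support outside $V$ lies in that of $P_0$. A second application of Lemma~\ref{lemmaUniform} makes $K$ finitely generated, and therefore $G'$ is finitely presented.

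For part (2), a careful reading of the proof of the converse in Lemma~\ref{lemmaUniform} shows that the finite generating vertex set assembled there for a finitely generated $S$-uniform representation is built from three pieces: the co-generating vertices of $\mathcal{G}(\mathcal{T}, S)$; the initial vertices of arrows in $S \setminus \mathrm{supp}_S(G')$; and the support of $G'$ outside $V$. The first piece is intrinsic to $\mathcal{T}$, while by the first paragraph the second and third pieces are uniformly contained in the initial vertices of $E$ and in the support of $F$ outside $V$, respectively. Enumerating this $(F,g)$-dependent but $G'$-independent finite union as $M_1, \ldots, M_t$ establishes (2). For part (3), set $P_0 := \Hom(-, M_1 \oplus \cdots \oplus M_t)$; by (2) each $G'$ admits a Yoneda epimorphism $P_0 \twoheadrightarrow G'$ whose kernel $K_{G'}$ is a sub-representation of $P_0$ of the fixed dimension vector $p_0 - g$. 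Applying (1) and (2) to the pair $(P_0, p_0 - g)$, using one fixed $K_G$ as the distinguished finitely presented sub-representation of $P_0$, yields finitely many $N_1, \ldots, N_s \in \mathcal{T}_0$ which generate every $K_{G'}$ uniformly. Taking $P_1 := \Hom(-, N_1 \oplus \cdots \oplus N_s)$ then produces the uniform projective presentation $P_1 \to P_0 \to G' \to 0$.

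The main obstacle I anticipate is justifying that the generating data appearing in the converse direction of Lemma~\ref{lemmaUniform} can genuinely be chosen uniformly in $G'$; this rests entirely on the elementary but crucial fact from the first paragraph that an injection between vector spaces of equal dimension is an isomorphism. Without fixing the dimension vector $g$, the exceptional set of arrows where $G'(s)$ fails to be an isomorphism could be arbitrarily large, and no uniform choice of projective presentation could exist.
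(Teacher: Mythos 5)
Your proof is correct and follows essentially the same strategy as the paper's: the fixed dimension vector, together with the $S$-uniformity of $F$ and of the single witness $G$, forces every $G'$ of dimension vector $g$ to be $S$-uniform with support outside $V$ contained in that of $F$, and parts (2) and (3) then bootstrap through Lemma~\ref{lemmaUniform}, Lemma~\ref{LemmaGeneratingSet} and a uniform choice of Yoneda epimorphisms exactly as in the paper. The only (harmless) deviation is in part (1), where you establish finite presentation by showing the syzygy $K \subseteq P_0$ is itself $S$-uniform with finite support outside $V$ and hence finitely generated, whereas the paper instead shows $F/G'$ is finitely presented and invokes the abelianness of ${\rm mod}^{\rm fp}\mathcal{T}$.
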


\begin{proof} Let $G$ be a subrepresentation of $F$ with dimension vector $g$ that is finitely presented. Let $G'$ be a subrepresentation of $F$ of dimension vector $g$. Consider the short exact sequence
$$0 \to G' \to F \to F/G' \to 0.$$
Assume for the moment that $G'$ is finitely generated. Thus, there is an epimorphism $f: P_1 \to G'$ where $P_1$ is finitely generated projective. Since $F$ is finitely presented, there is also an epimorphism $P_2 \to F$ where $P_2$ is finitely generated projective. This yields an epimorphism $g: P_2 \to F/G'$. Therefore, we have a commutative diagram
$$\xymatrix{0 \ar[r] & P_1 \ar[d]^f \ar[r] & P_1 \oplus P_2 \ar[d]^h \ar[r] & P_2 \ar[r] \ar[d]^g & 0 \\
0 \ar[r] & G' \ar[r] & F \ar[r] & F/G' \ar[r] & 0}$$
with exact rows where $h$ is the induced morphism. This yields an epimorphism ${\rm ker}(h) \to {\rm ker}(g)$. Since $F$ is finitely presented, ${\rm ker}(h)$ is finitely generated, so ${\rm ker}(g)$ is also finitely generated. This proves that $F/G'$ is finitely presented. Since ${\rm mod}^{\rm fp} \mathcal{T}$ is abelian, $G'$ is finitely presented. 

Therefore, in order to prove that $G'$ is finitely presented, we only need to prove that $G'$ is finitely generated. By Lemma \ref{lemmaUniform}, it is sufficient to prove it is $S$-uniform and supported on finitely many vertices of $\mathcal{T}_0 \backslash V$. To check the latter condition, let $W$ be the finitely many vertices of $\mathcal{T}_0 \backslash V$ that are supporting $F$. Then $W$ contains the vertices of $\mathcal{T}_0 \backslash V$ that are supporting $G'$. Let $S' = {\rm supp}_S(F) \cap {\rm supp}_S(G)$, which is co-finite in $S$.
Observe that for $s \in S'$, $G'(s)$ is the restriction of $F(s)$. Since the latter is an isomorphism, $G'(s)$ is a monomorphism. Let $s:x \to y$. Using that $s \in {\rm supp}_S(G)$, we get that $G(s)$ is an isomorphism, so $G(x), G(y)$ have the same dimension. Since $G,G'$ have the same dimension vector, we obtain that $G'(s)$ is indeed an isomorphism. Therefore $G'$ is $S$-uniform, which completes the proof of the first part. 

For the second part, observe that if $G, G'$ both have dimension vector $g$ and are subrepresentations of $F$, then ${\rm supp}_S(F) \cap {\rm supp}_S(G) = {\rm supp}_S(F) \cap {\rm supp}_S(G')$ is co-finite in $S$. Since $\mathcal{G}(\mathcal{T},S)$ is finitely co-generated, there are finitely many vertices $v_1, \ldots, v_r$ such that for any given vertex $v$ of $\mathcal{G}(\mathcal{T},S)$, there is a finite path from $v$ to one of these vertices. Let $S'$ be the set of initial vertices of the arrows in $S \backslash ({\rm supp}_S(F) \cap {\rm supp}_S(G))$. Then the set $U:=\{v_1, \ldots, v_r\}\cup S'\cup W$ of vertices is such that $\oplus_{v \in U}G'(v)$ generates $G'$,  by Lemma \ref{LemmaGeneratingSet}. This yields (2).

 In order to prove $(3)$, let $G,G'$ be two subrepresentations of dimension vector $g$, which we know are finitely presented. It follows from $(2)$ that there are epimorphisms $u_1: P_0 \to G$ and $u_2: P_0 \to G'$ where $P_0$ is finitely generated projective. Therefore, both ${\rm ker}u_1, {\rm ker}u_2$ are finitely generated subrepresentations of a finitely presented representation $P_0$. Since we know that the category of finitely presented representations is abelian, this gives that ${\rm ker}u_1, {\rm ker}u_2$ are actually finitely presented. Therefore, we can apply Statement $(2)$ in this setting, and there are epimorphisms $v_1: P_1 \to {\rm ker}u_1$ and $v_2: P_1 \to {\rm ker}u_2$ where $P_1$ is finitely generated projective.
\end{proof}
 
\begin{remark}
Lemma \ref{lemmafinitelypresented} is what we need to define cluster characters. We could have started this section with a triangulated category $\mathcal{A}$ together with a cluster-tilting subcategory $\mathcal{T}$ that satisfy Lemma \ref{lemmafinitelypresented}. Our notion of being quasi-bounded for a set $S$ implies the properties stated in that lemma.
\end{remark}

Now, given a finitely presented representation $F$ and a dimension vector $g$, one says that $g$ is \emph{finitely presented} in $F$ if there is a finitely presented subrepresentation of $F$ of dimension vector $g$. By Lemma \ref{lemmafinitelypresented}, in this case, all subrepresentations of $F$ of dimension vector $g$ are finitely presented. Given a finitely presented representation $F$ and a finitely presented dimension vector $g$ in $F$, recall from Lemma \ref{lemmafinitelypresented} that there are finitely generated projective representations $P_0$ and $P_1$ with the property that any subrepresentation of $F$ of dimension vector $g$ has a projective presentation of shape $P_1 \to P_0$. We fix an object $M(F,g)$ such that $P_0 \oplus P_1$ lies in ${\rm add}\Hom(-,M(F,g))$.  In particular, for any subrepresentation $G$ of $F$ of dimension vector $g$, we have that $G(M(F,g))$ generates $G$. We define the quiver $Q(F,g)$ as being the Gabriel quiver of $A(F,g):={\rm End}(M(F,g))^{\rm op}$. There is a restriction functor $\psi(Q,g): \mmod^{\rm fp}\mathcal{T} \to \mmod A(F,g)$ that takes $G$ to the restriction of $G$ on the additive subcategory of $\mathcal{T}$ generated by $M(F,g)$. We define the Grassmannian ${\rm Gr}_gF$ to be the classical Grassmannian of subrepresentations of $\psi(Q,g)F$ of dimension vector $\psi(Q,g)g$. {\sl A priori}, this definition depends on the chosen object $M(F,g)$, however, the lemma below implies that we can add finitely many summands to $M(F,g)$, and this will not change the Grassmannian.

\begin{lemma} \label{LemmaGrass}Using the above notation, let $M$ be an object of $\mathcal{T}$ having $M(F,g)$ as a direct summand and let $A'$ be the opposite endomorphism algebra of $M$. Let $\psi': \mmod^{\rm fp}\mathcal{T} \to \mmod A'$ that takes $H$ to the restriction of $H$ on the additive subcategory of $\mathcal{T}$ generated by $M$. Then ${\rm Gr}_gF$ is isomorphic to ${\rm Gr}_{g'}F'$ as projective varieties, where $F' = \psi'(F)$ and $g' = \psi'(g)$.
\end{lemma}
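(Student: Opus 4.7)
The plan is to produce mutually inverse morphisms of projective varieties between ${\rm Gr}_gF$ and ${\rm Gr}_{g'}F'$. Let $e \in A'=\End(M)^{\rm op}$ denote the idempotent projecting onto the summand $M(F,g)$, so that $A(F,g) = eA'e$ and the restriction functor $\psi(Q,g)$ is multiplication by $e$. The forward map $\Phi:{\rm Gr}_{g'}F'\to{\rm Gr}_gF$ is restriction, $H\mapsto eH$. Since $g'=\psi'(g)$ extends $g$ on the summands of $M(F,g)$, this has the correct dimension vector. It is a morphism of projective varieties because it is the projection from the ambient product of classical Grassmannians (in which ${\rm Gr}_{g'}F'$ is embedded) onto the factors indexed by indecomposable summands of $M(F,g)$.

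For the reverse direction, write $M=M(F,g)\oplus N$ and define $\Psi:{\rm Gr}_gF\to{\rm Gr}_{g'}F'$ by sending $H_0$ to the smallest $A'$-submodule of $F'$ containing it. Concretely, for each indecomposable summand $X$ of $N$,
\[
\Psi(H_0)(X)=\sum_{Y,\,f} F(f)\bigl(H_0(Y)\bigr)\subseteq F(X),
\]
where $Y$ runs over indecomposable summands of $M(F,g)$ and $f\in\Hom_\mathcal{T}(X,Y)$; and $\Psi(H_0)(Y)=H_0(Y)$ for $Y$ a summand of $M(F,g)$. Functoriality of $F$ ensures this is an $A'$-submodule of $F'$, and the defining sum depends polynomially on the coordinates of $H_0$, so $\Psi$ is a morphism of varieties. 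The key point is that $\dim\Psi(H_0)(X)=g(X)$ for every $H_0\in{\rm Gr}_gF$ and every $X\in\operatorname{add}(N)$. Here I would use Lemma~\ref{lemmafinitelypresented}(3): there is a single projective presentation $P_1\to P_0$ in $\operatorname{add}\Hom(-,M(F,g))$ valid for every subrepresentation of $F$ of dimension vector $g$. Rewriting the defining sum for $\Psi(H_0)(X)$ using this presentation exhibits it as the image of a linear map whose rank depends only on the shape of the presentation and not on $H_0$; comparing with the case $H_0=\psi(Q,g)G$, where $G\subseteq F$ is a $\mathcal{T}$-subrepresentation of dimension vector $g$ (which exists because $g$ is finitely presented in $F$), identifies this common rank as $g(X)=g'(X)$.

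The two maps are then mutually inverse. The equality $\Phi\Psi=\operatorname{id}$ is immediate from the defining sum, since on summands $Y$ of $M(F,g)$ the term indexed by $(Y,\operatorname{id}_Y)$ already recovers $H_0(Y)$, and the $A$-stability of $H_0$ forces the remaining terms there to lie inside $H_0(Y)$. For $\Psi\Phi=\operatorname{id}$, given $H\in{\rm Gr}_{g'}F'$, the submodule $\Psi(eH)$ is contained in $H$ and has the same dimension vector $g'$ by the key claim, hence $\Psi(eH)=H$. Being mutually inverse morphisms of projective varieties, $\Phi$ and $\Psi$ are isomorphisms, which proves the lemma. The main obstacle is the dimension-vector count at the end of the second paragraph: a priori $\Psi(H_0)(X)$ could have dimension strictly less than $g(X)$ when $H_0$ does not visibly arise from a $\mathcal{T}$-subrepresentation of $F$, and the uniform projective presentation from Lemma~\ref{lemmafinitelypresented}(3) is precisely the rigidity needed to guarantee the correct dimension in all cases.
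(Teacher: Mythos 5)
Your argument is essentially the paper's own proof in concrete form: the paper works with the restriction functor $\rho=\psi(Q,g)$ and its left adjoint $\lambda=-\otimes_{A(F,g)}\Hom(M,M(F,g))$, and your map $\Psi$ (the $A'$-submodule generated by $H_0$) is exactly the image of $\lambda(H_0)$ in $F'$, with the mutual-inverse property in both treatments resting on the uniform projective presentation in ${\rm add}\Hom(-,M(F,g))$ from Lemma~\ref{lemmafinitelypresented}. The only soft spot --- justifying that the generated submodule has dimension vector exactly $g'$ for \emph{every} point of ${\rm Gr}_gF$, not just those of the form $\psi(Q,g)G$ --- is handled at the same level of detail as in the paper, so the proposal matches both the strategy and the substance of the published argument.
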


\begin{proof}
Let $\rho: \mmod A' \to \mmod A(F,g)$ be the restriction functor which coincides with $H \mapsto \Hom_{A'}(\Hom(M,M(F,g)),H)$ for a right $A'$-module $H$.
Consider the left adjoint functor $\lambda: \mmod A(F,g) \to \mmod A'$ given by $H' \mapsto H'\otimes_{A(F,g)} \Hom(M,M(F,g))$ for a right $A(F,g)$-module $H'$. Observe that $\rho, \gamma$ are equivalences when one restricts to finitely generated projective right $A'$-modules in ${\rm add}\Hom(M,M(F,g))$ and all finitely generated projective right $A(F,g)$-modules. Observe also that the minimal projective presentation of any finitely presented $g$-dimensional subrepresentation $G$ of $F$ has terms lying in ${\rm add}\Hom(-,M(F,g))$. Therefore, it follows that $\lambda\rho\psi'(G) \cong \psi'(G)$ and that $\rho\lambda\psi(G) \cong \psi(G)$. Therefore, $\rho$ gives rise to a bijection between finitely presented subrepresentations of $\psi'(F)$ of dimension vector $\psi'(g)$ to finitely presented subrepresentations of $\psi(F)$ of dimension vector $\psi(g)$. This morphism is a projection and hence a morphism of projective varieties.
\end{proof}

\subsection{Indices and Coindices}

Consider a set $\{x_i\}_{i \in \mathcal{T}_0}$ of indeterminates. For $i \in \mathcal{T}_0$, we denote by $U_i$ the corresponding indecomposable object of $\mathcal{T}$. Let ${\rm Mon}$ be the set of all finite Laurent monomials. Consider $B = \prod_{m \in {\rm Mon}}\mathbb{Z}$. An element in $B$ can be thought of as an infinite linear combination of the Laurent monomials. Note that although some elements in $B$ can be multiplied naturally, in the sense of power series multiplication, the multiplication is not always defined. For instance, $(1/x_i)_{i \in \mathcal{T}_0}$ cannot be multiplied with $(x_i)_{i \in \mathcal{T}_0}$. We will eventually define a subgroup of $B$ that will yield a $\mathbb{Z}$-algebra. Let $K_0(\mathcal{T})$ denote the Grothendieck group of $\mmod^{\rm fp}\mathcal{T}$.  For a finitely presented representation $F$, we denote by $[F]$ the corresponding element in $K_0(\mathcal{T})$ which can be identified with the dimension vector of $F$. Observe that $K_0(\mathcal{T})$ is the subgroup of $\prod_{i\in \mathcal{T}_0}\mathbb{Z}$ consisting of the dimension vectors of the finitely presented representations. It is worth noting that in general, $K_0(\mathcal{T}) \ne \prod_{i\in \mathcal{T}_0}\mathbb{Z}$. Also, the dimension vectors of the finitely generated indecomposable projectives in $K_0(\mathcal{T})$ may not generate $K_0(\mathcal{T})$. We will actually work with the subgroup $K_0'(\mathcal{T})$ of $K_0(\mathcal{T})$ generated by the dimension vectors of the projectives. For an object $M \in \mathcal{A}$, we consider a triangle
$$T_1 \to T_0 \to M \to T_1[1]$$
where $T_0 \to M$ is a minimal right $\mathcal{T}$-approximation of $M$ and, thanks to $\mathcal{T}$ being cluster-tilting, $T_1 \in \mathcal{T}$ as well. We define the \emph{index} of $M$ to be ${\rm ind}(M)=[\varphi T_0] - [\varphi T_1] \in K_0'(\mathcal{T})$, where we recall that $\varphi$ is the functor in Theorem \ref{EquivCategories}. Note that there is a similar triangle
$$M \to T_0'[2] \to T_1'[2] \to M[1]$$
where $T_0', T_1' \in \mathcal{T}$. We define the \emph{coindex} of $M$ to be ${\rm coind}(M)=[\varphi T_0'] - [\varphi T_1']$. The arguments given in \cite{P} carry to our setting to yield the following two lemmas.

\begin{lemma} The index and coindex of an object in $\mathcal{A}$ are well defined in the sense that they do not depend on the chosen exact triangles above.
\end{lemma}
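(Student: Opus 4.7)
The plan is to reduce the general assertion to uniqueness of minimal approximations. Since $\mathcal{A}$ is Hom-finite and Krull-Schmidt, minimal right $\mathcal{T}$-approximations exist and are unique up to isomorphism; fix one, $f^{\min}\colon T_0^{\min}\to M$, and complete to a triangle $T_1^{\min}\to T_0^{\min}\to M\to T_1^{\min}[1]$. The cone $T_1^{\min}$ is determined up to isomorphism, and it lies in $\mathcal{T}$: applying $\Hom_{\mathcal{A}}(\mathcal{T},-)$ to the triangle, the surjectivity of $\Hom(\mathcal{T},T_0^{\min})\to\Hom(\mathcal{T},M)$ coming from the approximation property, together with $\Hom(\mathcal{T},T_0^{\min}[1])=0$, forces $\Hom(\mathcal{T},T_1^{\min}[1])=0$, and then the cluster-tilting condition (i) of Definition~\ref{defn-cts} gives $T_1^{\min}\in\mathcal{T}$. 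Hence the quantity $[\varphi T_0^{\min}]-[\varphi T_1^{\min}]\in K_0'(\mathcal{T})$ is unambiguously associated to $M$.

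The key step is to prove that any right $\mathcal{T}$-approximation $f\colon T_0\to M$ yields the same value. Since both $f$ and $f^{\min}$ are right approximations, there exist $i\colon T_0^{\min}\to T_0$ and $\pi\colon T_0\to T_0^{\min}$ with $fi=f^{\min}$ and $f^{\min}\pi=f$. Then $f^{\min}(\pi i)=f^{\min}$, and minimality of $f^{\min}$ forces $\pi i$ to be an automorphism of $T_0^{\min}$; rescaling $i$ we may assume $\pi i=\id$. Krull-Schmidt then yields $T_0\cong T_0^{\min}\oplus S$ with $i$ and $\pi$ realized as structure maps, and under this identification $f$ becomes $(f^{\min},0)\colon T_0^{\min}\oplus S\to M$ (the component on $S$ vanishes because it factors as $f^{\min}\pi|_S=0$). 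This morphism is the direct sum, in the arrow category, of $f^{\min}\colon T_0^{\min}\to M$ and $0\colon S\to 0$; completing the latter gives the trivial triangle $S\stackrel{\id}{\to}S\to 0\to S[1]$. Since direct sums of triangles are triangles, completing $f$ gives
\[T_1^{\min}\oplus S\to T_0^{\min}\oplus S\to M\to (T_1^{\min}\oplus S)[1],\]
so $T_1\cong T_1^{\min}\oplus S$. Therefore
\[[\varphi T_0]-[\varphi T_1]=[\varphi T_0^{\min}]+[\varphi S]-[\varphi T_1^{\min}]-[\varphi S]=[\varphi T_0^{\min}]-[\varphi T_1^{\min}],\]
which depends only on $M$.

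The coindex is handled dually. Rotating the coindex triangle gives $T_1'[1]\to M\to T_0'[2]\to T_1'[2]$, and applying $\Hom(-,T[2])$ for $T\in\mathcal{T}$ produces a long exact sequence whose obstruction term $\Hom(T_1'[1],T[2])=\Hom(T_1',T[1])$ vanishes because $T_1'\in\mathcal{T}$; hence $M\to T_0'[2]$ is a left $\mathcal{T}[2]$-approximation. A minimal such approximation exists by Krull-Schmidt, and the mirror version of the decomposition above (splitting off some $S'\in\mathcal{T}$) produces the corresponding identity $[\varphi T_0']-[\varphi T_1']=[\varphi T_0'^{\min}]-[\varphi T_1'^{\min}]$ in $K_0'(\mathcal{T})$. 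I expect the main subtle point to be verifying that the cone $T_1$ of a non-minimal approximation actually lies in $\mathcal{T}$; once this is confirmed from the cluster-tilting axiom as above, the rest is the Krull-Schmidt decomposition plus additivity of triangle completions under direct sums, exactly as in the classical setting of \cite{P}.
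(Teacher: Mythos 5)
Your proof is correct and follows essentially the same route as the paper, which simply invokes \cite[Lemma 2.1]{P}: any right $\mathcal{T}$-approximation triangle is the direct sum of the minimal one with a split triangle $Z \stackrel{1}{\to} Z \to 0 \to Z[1]$, so the class $[\varphi T_0]-[\varphi T_1]$ is unchanged. You merely supply the details (minimality argument, $T_1\in\mathcal{T}$ via the cluster-tilting axiom, and the dual case) that the paper delegates to the reference.
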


\begin{proof}
As explained in \cite[Lemma 2.1]{P}, if one chooses another right $\mathcal{T}$-approximation $T_0' \to M$ of $M$, then we get a corresponding exact triangle
$$T_1' \to T_0' \to M \to T_1[1]$$
which is isomorphic to the direct sum of the above exact triangle $$T_1 \to T_0 \to M \to T_1[1]$$
with a split triangle $Z \stackrel{1}{\to} Z \to 0 \to Z[1]$ where $T_0' \cong T_0 \oplus Z$ and $T_1' = T_1 \oplus Z$.
\end{proof}

More importantly, as shown in \cite{P}, the function ${\rm coind} - {\rm ind}: {\rm obj}(\mathcal{A}) \to K_0'(\mathcal{T})$ induces a well-defined function ${\rm coind} - {\rm ind}: K_0(\mathcal{T}) \to K_0'(\mathcal{T})$.

\begin{lemma}
If $F_1, F_2$ are finitely presented with the same dimension vector, then ${\rm coind}(F_1) - {\rm ind}(F_1) = {\rm coind}(F_2) - {\rm ind}(F_2)$.
\end{lemma}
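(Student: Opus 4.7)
My plan is to follow the strategy of Palu in \cite{P}, showing that $\mathrm{coind}-\mathrm{ind}$ is additive on triangles of $\mathcal{A}$ and then deducing the lemma from the presentation of $K_0(\mathcal{T}) = K_0(\mmod^{\rm fp}\mathcal{T})$. As a preliminary observation, applying $\mathrm{Hom}(T,-)$ for $T\in \mathcal{T}$ to the index triangle $T_1\to T_0\to M\to T_1[1]$ and using the cluster-tilting condition $\mathrm{Hom}(\mathcal{T},\mathcal{T}[1])=0$ yields a projective presentation $\varphi T_1\to\varphi T_0\to\varphi M\to 0$ in $\mmod^{\rm fp}\mathcal{T}$; so $\mathrm{ind}(M)$ is the alternating sum of the terms of a projective presentation of $\varphi M$, and symmetrically $\mathrm{coind}(M)$ arises from the coindex triangle.

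The core step is to prove that for every triangle $X\to Y\to Z\to X[1]$ in $\mathcal{A}$,
$$(\mathrm{coind}-\mathrm{ind})(Y) = (\mathrm{coind}-\mathrm{ind})(X) + (\mathrm{coind}-\mathrm{ind})(Z).$$
I would apply the octahedral axiom to combine the index triangles for $X$ and $Z$ with $X\to Y\to Z\to X[1]$, producing a right $\mathcal{T}$-approximation of $Y$ that agrees with the minimal one up to summands from $\mathcal{T}$. Tracking contributions in $K_0'(\mathcal{T})$ gives an identity $\mathrm{ind}(Y) = \mathrm{ind}(X) + \mathrm{ind}(Z) - [\varphi C]$ for a correction object $C$ recorded by the octahedral diagram. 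Applying the same construction to the coindex triangles of $X$ and $Z$ produces $\mathrm{coind}(Y) = \mathrm{coind}(X) + \mathrm{coind}(Z) - [\varphi C]$ with the \emph{same} correction; subtracting yields the desired additivity.

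Once additivity on triangles is in hand, the lemma follows by a standard chain argument. Two finitely presented $F_1,F_2$ with equal dimension vector have $[F_1]=[F_2]$ in $K_0(\mmod^{\rm fp}\mathcal{T})$, hence are connected by a finite chain of short exact sequences. Each short exact sequence in $\mmod^{\rm fp}\mathcal{T}$ lifts via Theorem~\ref{EquivCategories} to a triangle in $\mathcal{A}$, up to summands in $\mathcal{T}[1]$ (direct summands in $\mathcal{T}[1]$ contribute zero to $\mathrm{coind}-\mathrm{ind}$, as one sees directly from the approximation triangles). Applying additivity along the chain produces the equality.

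The main obstacle will be verifying that the correction term $[\varphi C]$ cancels between the index and coindex computations. In the $2$-Calabi-Yau setting of \cite{P}, this cancellation is essentially Palu's antisymmetric bilinear form argument, which makes use of the Serre functor; since $\overline{\mathcal{C}}$ is not $2$-CY and Ext-spaces are not symmetric, one must re-examine the octahedral diagrams directly and verify that the correction produced by the index octahedron agrees with the one from the coindex octahedron. The relevant input turns out to depend only on $\varphi$ being cohomological with kernel $\mathcal{T}[1]$ and on $\mathcal{T}$ being cluster-tilting, both of which hold here; this also explains why $\mathrm{coind}-\mathrm{ind}$ (rather than $\mathrm{ind}$ alone) is the combination that descends to $K_0(\mathcal{T})$ in our non-$2$-CY setting.
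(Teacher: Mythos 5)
Your overall architecture --- additivity on triangles followed by a chain argument through $K_0(\mmod^{\rm fp}\mathcal{T})$ --- matches the paper's, but your ``core step'' is false as stated: ${\rm coind}-{\rm ind}$ is \emph{not} additive on arbitrary triangles of $\mathcal{A}$, and the two octahedral correction terms you introduce do not agree in general. This already fails in the classical $2$-Calabi--Yau setting. In the cluster category of $A_2$ (quiver $1\to 2$) with $\mathcal{T}={\rm add}(P_1\oplus P_2)$, take the triangle $S_1\to 0\to S_1[1]\xrightarrow{1} S_1[1]$. One computes ${\rm ind}(S_1)=[P_1]-[P_2]$ and ${\rm coind}(S_1)=[P_1]$ (as $S_1=\nu P_1$ is injective, its minimal left $\mathcal{T}[2]$-approximation is the identity), while $S_1[1]\cong \tau S_1=P_2$ gives ${\rm ind}(S_1[1])=[P_2]$ and ${\rm coind}(S_1[1])=[P_2]-[P_1]$ (from $0\to S_2\to I_2\to I_1\to 0$). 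So the index defect is $[P_1]$, the coindex defect is $[P_2]$, and
$$({\rm coind}-{\rm ind})(S_1)+({\rm coind}-{\rm ind})(S_1[1])=[P_2]-[P_1]\neq 0=({\rm coind}-{\rm ind})(0).$$
Thus the cancellation you defer to your final paragraph genuinely fails for general triangles; this non-additivity is exactly why the index does not descend to the Grothendieck group of the triangulated category (cf.\ \cite{P2}).

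What is true, and what the paper actually uses, is additivity of ${\rm ind}$ on triangles whose connecting morphism factors through $\mathcal{T}[1]$ --- equivalently, triangles that $\varphi$ sends to short exact sequences in $\mmod^{\rm fp}\mathcal{T}$ --- with no correction term at all (first part of the proof of Proposition 2.2 in \cite{P}); applying this to the triangle shifted by $[-1]$, whose connecting morphism again factors through $\mathcal{T}[1]$, gives additivity of ${\rm coind}$ on the same class. Since your chain argument only ever invokes additivity for triangles lifted from short exact sequences of finitely presented modules (together with the fact that ${\rm coind}-{\rm ind}$ depends only on $\varphi$ of the object, Lemma 2.1(4) of \cite{P}, which is what justifies discarding $\mathcal{T}[1]$-summands and passing from objects of $\mathcal{A}$ to modules), your proof is repaired by replacing the false general additivity with this restricted version; after that correction it coincides with the paper's argument.
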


\begin{proof}
The proof essentially follows the same arguments as given in \cite{P}. First, let
$$0 \to X \to Y \to Z \to 0$$
be a short exact sequence in $\mathrm{mod}^{fp}\mathcal{T}$. Then following exactly the same arguments as in Lemma 3.1 of \cite{P}, there exists an exact triangle
$$\eta: X' \to Y' \to Z' \stackrel{\epsilon}{\to} X'[1]$$
in $\mathcal{A}$ such that $\varphi$ maps $\eta$ to the above short exact sequence.  The next step is to check that ${\rm coind}(U) - {\rm ind}(U)$ depends only on $\varphi(U)$. This is Lemma 2.1(4) of \cite{P} and again, the same argument works in our setting. To complete the proof, we need to check that
$${\rm coind}(Y') - {\rm ind}(Y') = {\rm coind}(X') - {\rm ind}(X') + {\rm coind}(Z') - {\rm ind}(Z').$$
Since $\epsilon$ factors through an object in $\mathcal{T}[1]$, it follows from the argument given in the first part of the proof of Proposition 2.2 in \cite{P} that ${\rm ind}(Y') = {\rm ind}(X') + {\rm ind}(Z')$. Now, the triangle
$$X'[-1] \to Y'[-1] \to Z'[-1] \stackrel{\epsilon'}{\to} X'$$
is such that $\epsilon'$ factors through an object in $\mathcal{T}[1]$. Therefore, we have
${\rm ind}(Y'[-1]) = {\rm ind}(X'[-1]) + {\rm ind}(Z'[-1])$
which yields $-{\rm coind}(Y') = -{\rm coind}(X') - {\rm coind}(Z')$.

\end{proof}

\subsection{Cluster Characters}
Note that the coindex and index of an object are always finite linear combinations of the dimension vectors of the indecomposable projectives. For a finitely presented representation $F$, let us write
$${\rm coind}(F) = \sum_{i \in \mathcal{T}_0}a_i[\varphi U_i]$$
$${\rm ind}(F) = \sum_{i \in \mathcal{T}_0}b_i[\varphi U_i]$$
where both sums are finite sums. We denote by $X^{{\rm coind}(F)}$ (resp. $X^{{\rm ind}(F)}$) the Laurent monomial such that for $i \in \mathcal{T}_0$, the exponent of $x_i$ is  $a_i$ (resp. $b_i$). By the above lemma, for $g \in K_0(\mathcal{T})$, we write $X^{{\rm coind}(g) - {\rm ind}(g)}$ for the Laurent monomial $X^{{\rm coind}(G)}X^{-{\rm ind}(G)}$ where $G$ is any finitely presented representation of dimension vector $g$. We define the map
$$X^{\mathcal{T}}: {\rm obj}(\mathcal{A}) \to B$$
with the formula
$$X^{\mathcal{T}}(M) = X^{-{\rm coind}(\varphi(M))}\prod_g \chi({\rm Gr}_g(\varphi(M)))X^{{\rm coind}(g) - {\rm ind}(g)}$$
where the product runs through the finitely presented dimension vectors in $\varphi(M)$. Recall that if $\mathcal{A}$ is in addition $2$-Calabi-Yau, then a function $\zeta:{\rm obj}(\mathcal{A}) \to B$ is a \emph{cluster character} if $\zeta(M \oplus N) = \zeta(M)\zeta(N)$ for all objects $M,N$; and if $M,N$ are indecomposable with $\Hom(M,N [1])$ one dimensional, then $\zeta(M)\zeta(N) = \zeta(B_1)+\zeta(B_2)$ where we have two non-split exact triangles
$$M \to B_1 \to N \to M[1]$$
$$N \to B_2 \to M \to N[1].$$ The first property is called the \emph{multiplication formula}, while the second is called the \emph{exchange formula}. Since we are working with a category $\mathcal{A}$ that does not always have a Serre functor, we will have to restrict the exchange formula to objects in a full subcategory of $\mathcal{A}$ that has a Serre functor. This will be defined in Theorem \ref{ExchangePropertyCY}. We start with the multiplication formula.

\begin{proposition}
We have $X^{\mathcal{T}}(N_1 \oplus N_2) = X^{\mathcal{T}}(N_1)X^{\mathcal{T}}(N_2)$.
\end{proposition}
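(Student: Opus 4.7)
The plan is to verify that each of the three ingredients composing $X^\mathcal{T}$ --- the outer monomial $X^{-{\rm coind}(\varphi(M))}$, the exponent monomials $X^{{\rm coind}(g)-{\rm ind}(g)}$, and the Euler characteristic $\chi({\rm Gr}_g(\varphi(M)))$ --- respects the direct sum decomposition $M = N_1 \oplus N_2$. The first two pieces reduce to standard additivity; the Grassmannian factor is the main obstacle and requires a reduction to a classical quiver Grassmannian.

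For the monomial factors, note first that $\varphi = \Hom(-,\cdot)|_{\mathcal{T}}$ is $k$-linear, so $\varphi(N_1 \oplus N_2) = \varphi(N_1) \oplus \varphi(N_2)$. A minimal right $\mathcal{T}$-approximation of a direct sum is the direct sum of minimal right $\mathcal{T}$-approximations of the summands, and analogously for the dual triangle defining the coindex. Hence ${\rm ind}$ and ${\rm coind}$ are additive on $\mathcal{A}$, and by the preceding lemma the induced map ${\rm coind}-{\rm ind}: K_0(\mathcal{T}) \to K_0'(\mathcal{T})$ is $\mathbb{Z}$-linear. Therefore $X^{-{\rm coind}(\varphi(N_1 \oplus N_2))} = X^{-{\rm coind}(\varphi(N_1))} X^{-{\rm coind}(\varphi(N_2))}$, and for any decomposition $g = g_1 + g_2$ we have
\[X^{{\rm coind}(g)-{\rm ind}(g)} = X^{{\rm coind}(g_1)-{\rm ind}(g_1)} \, X^{{\rm coind}(g_2)-{\rm ind}(g_2)}.\]

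The heart of the proof is the Grassmannian identity
\[\chi({\rm Gr}_g(\varphi(N_1) \oplus \varphi(N_2))) = \sum_{g_1+g_2 = g} \chi({\rm Gr}_{g_1}(\varphi(N_1))) \, \chi({\rm Gr}_{g_2}(\varphi(N_2))),\]
which is the main obstacle since the $\varphi(N_i)$ need not be finitely supported. The strategy is to pull the computation back to a classical setting via Lemma \ref{LemmaGrass}: choose a single object $M' \in \mathcal{T}$ whose additive closure contains $M(\varphi(N_1 \oplus N_2), g)$ together with all the $M(\varphi(N_j), g_j)$ for the finitely many pairs $(g_1, g_2)$ with $g_1 + g_2 = g$ (finiteness follows from Lemma \ref{lemmafinitelypresented}, since finitely presented subrepresentations of $\varphi(N_1 \oplus N_2)$ of dimension $g$ project onto finitely presented subrepresentations of the summands of controlled dimension). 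After restriction via the associated $\psi$, all three Grassmannians become classical quiver Grassmannians over the finite-dimensional algebra $A = {\rm End}(M')^{\rm op}$, and the decomposition $F = \psi\varphi(N_1) \oplus \psi\varphi(N_2)$ is preserved. One then applies the standard Caldero-Chapoton torus argument: the multiplicative group $k^*$ acts on ${\rm Gr}_g(F)$ by $\lambda \cdot (v_1, v_2) = (v_1, \lambda v_2)$, the fixed-point locus is $\bigsqcup_{g_1+g_2=g} {\rm Gr}_{g_1}(\psi\varphi(N_1)) \times {\rm Gr}_{g_2}(\psi\varphi(N_2))$, and Bialynicki-Birula yields the Euler characteristic identity.

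Assembling the three ingredients and interchanging the summation over $g$ with the decomposition $g = g_1 + g_2$ (each sum being finite at every stage), the expression for $X^\mathcal{T}(N_1 \oplus N_2)$ factors into $X^\mathcal{T}(N_1) \, X^\mathcal{T}(N_2)$, as claimed.
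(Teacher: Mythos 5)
Your proposal is correct and follows essentially the same route as the paper: reduce the Grassmannian factor to a classical quiver Grassmannian over ${\rm End}(M(F,g))^{\rm op}$ via the restriction functor and Lemma \ref{LemmaGrass}, invoke the classical multiplicativity of Euler characteristics of Grassmannians under direct sums (which you spell out via the torus action, where the paper simply cites the classical formula), and conclude by additivity of index and coindex. The only cosmetic difference is that you enlarge to an object $M'$ containing all the $M(\varphi(N_j),g_j)$, whereas the paper observes these are already direct summands of $M(F,g)$.
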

\begin{proof} Let $\varphi(N_i)=F_i$ and $\varphi(N_1 \oplus N_2)=F$ so that $F \cong F_1 \oplus F_2$. Observe that if $g_i$ is a finitely presented dimension vector in $F_i$, then $g_1 + g_2$ is a finitely presented dimension vector in $F$. Moreover, if $g= g_1 + g_2$ is finitely presented in $F$ with $g_1$ (or $g_2$) finitely presented in $F_1$ (resp. $F_2$), then $g_2$ (resp. $g_1$) is either finitely presented in $F_2$ (resp. $F_1$) or there is no sub-representation of $F_2$ (resp. of $F_1$) of that dimension vector. We consider $M: = M(F,g)$ and note that if $g=g_1 + g_2$ with $g_i$ finitely presented in $F_i$, then $M(F_i,g_i)$ is a direct summand of $M$. In particular, we have finitely many possible decompositions $g=g_1 + g_2$ where $g_1,g_2$ are finitely presented in $F_1, F_2$, respectively. We consider the restriction functor $\psi: {\rm \mmod}^{\rm fp}\mathcal{T} \to \mmod {\rm End}(M)^{\rm op}$. The classical formula
$$\chi({\rm Gr}_g(\psi F)) = \sum_{g=g_1+g_2}\chi({\rm Gr}_{g_1}(\psi F_1))\chi({\rm Gr}_{g_2}(\psi F_2))$$
on the Euler characteristic of Grassmannians translates to
$$\chi({\rm Gr}_g(F)) = \sum_{g=g_1+g_2}\chi({\rm Gr}_{g_1}(F_1))\chi({\rm Gr}_{g_2}(F_2)),$$
by using Lemma \ref{LemmaGrass} and the definition of Grassmannian for representations of $\mathcal{T}$. Now, the result follows from using additivity of the coindex and index.
\end{proof}

\begin{lemma} Assume that $\mathcal{A}$ has a Serre functor $\mathbb{S}$ and that $\mathcal{A}'$ is the Verdier quotient of $\mathcal{A}$ by a thick subcategory $\mathcal{S}$. Assume further that $\mathbb{S}$ is stable on $\mathcal{S}$. Then $\mathbb{S}$ induces a functor $\overline{\mathbb{S}}: \mathcal{A}' \to \mathcal{A}'$ such that $\overline{\mathbb{S}} \pi = \pi \mathbb{S}$ where $\pi: \mathcal{A} \to \mathcal{A'}$ is the quotient functor.
\end{lemma}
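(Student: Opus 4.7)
The plan is to construct $\overline{\mathbb{S}}$ directly from the universal property of the Verdier quotient. Recall that $\pi : \mathcal{A} \to \mathcal{A}' = \mathcal{A}/\mathcal{S}$ enjoys the following universal property: any exact functor $F : \mathcal{A} \to \mathcal{B}$ of triangulated categories with $F(X) \cong 0$ for every $X \in \mathcal{S}$ factors uniquely through $\pi$ as $F = \overline{F} \circ \pi$ for some exact functor $\overline{F} : \mathcal{A}' \to \mathcal{B}$. My strategy is to apply this to the composition $F := \pi \circ \mathbb{S} : \mathcal{A} \to \mathcal{A}'$.

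First, I would note that $F$ is exact: the Serre functor $\mathbb{S}$ is a triangulated auto-equivalence of $\mathcal{A}$, and $\pi$ is exact by construction of the Verdier quotient. Next, the hypothesis that $\mathbb{S}$ is stable on $\mathcal{S}$, i.e.\ $\mathbb{S}(\mathcal{S}) \subseteq \mathcal{S}$, ensures that for any object $X \in \mathcal{S}$ we have $\mathbb{S}(X) \in \mathcal{S}$; since $\mathcal{S}$ is precisely the kernel of $\pi$ (it is a thick subcategory and $\pi$ kills exactly its objects), we get $F(X) = \pi(\mathbb{S}(X)) \cong 0$. Applying the universal property to $F$ then produces a unique exact functor $\overline{\mathbb{S}} : \mathcal{A}' \to \mathcal{A}'$ satisfying $\overline{\mathbb{S}} \circ \pi = \pi \circ \mathbb{S}$, which is exactly the assertion of the lemma.

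There is no serious obstacle here, since the argument is essentially formal once the universal property of the Verdier quotient is invoked. The only point to double-check is that the stability hypothesis is applied in the correct direction; one only needs $\mathbb{S}(\mathcal{S}) \subseteq \mathcal{S}$ to obtain the induced functor $\overline{\mathbb{S}}$. (If one later wished $\overline{\mathbb{S}}$ to again be a Serre functor or an equivalence, the analogous stability for a quasi-inverse $\mathbb{S}^{-1}$ would also be needed, but that goes beyond what is being asserted in this lemma.)
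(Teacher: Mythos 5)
Your argument is correct and is essentially identical to the paper's proof: the authors also observe that the stability hypothesis makes $\pi\mathbb{S}$ annihilate $\mathcal{S}$ and then invoke the universal property of the Verdier quotient (citing Proposition 4.6.2 of Krause) to obtain $\overline{\mathbb{S}}$. No issues.
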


\begin{proof}
Since $\mathbb{S}$ is an exact functor that is stable on the subcategory $\mathcal{S}$, the composition $\pi \mathbb{S}$ annihilates $\mathcal{S}$ and therefore, from Proposition 4.6.2 in \cite{K}, it follows that there is an exact functor $\overline{\mathbb{S}}: \mathcal{A}' \to \mathcal{A}'$ such that $\overline{\mathbb{S}} \pi = \pi \mathbb{S}$.
\end{proof}

We apply this for our categories $\mathcal{C} = \mathcal{C}_{(S',M')}$ and $\overline{\mathcal{C}}$.  Recall that $\mathcal{C}$ has a Serre functor which coincides with the second power $[2]$ of the shift functor. The next lemma shows that even though $\overline{\mathcal{C}}$ does not have a Serre functor, the functor  $\overline{\mathbb{S}}$ behaves like a Serre functor for the objects in 
the subcategory $\mathcal{C}_{(S,M)}$. Note that the functorial isomorphism below is at the level of the entire $\overline{\mathcal{C}}$. The restriction of $\overline{\mathbb{S}}$ to the subcategory $\mathcal{C}_{(S,M)}$ is the Serre functor of this subcategory.

\begin{lemma} \label{locallyCY}
Let $M$ be an indecomposable object in $\overline{\mathcal{C}}$ corresponding to an ordinary arc. Then there is a functorial isomorphism
$$\Hom_{\overline{\mathcal{C}}}(M,-) \cong D\Hom_{\overline{\mathcal{C}}}(-,\overline{\mathbb{S}}M) \cong D\Hom_{\overline{\mathcal{C}}}(-,M[2]).$$
\end{lemma}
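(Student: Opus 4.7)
The plan is to reduce the statement to the $2$-Calabi-Yau property of $\mathcal{C}=\mathcal{C}_{(S',M')}$, whose Serre functor is $[2]$. The crucial point is that the hypothesis "$\ell_M$ ordinary" means both endpoints of $\ell_M$ lie in $M\subset M'$, hence outside every added interval $(z_i^-,z_i^+)$. So the natural lift $\tilde M\in\mathcal{C}$ lies in the perpendicular subcategory $\mathcal{D}'=\mathcal{D}^\perp={}^\perp\mathcal{D}$, and by shift-invariance of $\mathcal{D}'$ the same holds for $\tilde M[2]$. The identification $\overline{\mathbb{S}}M=M[2]$ is then immediate from the preceding lemma, since $\overline{\mathbb{S}}\pi=\pi\mathbb{S}=\pi[2]$; this accounts for the second isomorphism.

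The main technical step I would prove first is a general Verdier-quotient lemma: if $X\in{}^\perp\mathcal{D}$, then for every $Y\in\mathcal{C}$ the canonical map $\Hom_\mathcal{C}(X,Y)\to\Hom_{\overline{\mathcal{C}}}(X,Y)$ is an isomorphism, and dually if $X\in\mathcal{D}^\perp$, then $\Hom_\mathcal{C}(Y,X)\to\Hom_{\overline{\mathcal{C}}}(Y,X)$ is an isomorphism. Surjectivity follows from the calculus of left fractions: given $(f,g):X\xrightarrow{f}Z\xleftarrow{g}Y$ with cone $D\in\mathcal{D}$ of $g$, applying $\Hom_\mathcal{C}(X,-)$ to the triangle $Y\to Z\to D\to Y[1]$ and using $\Hom_\mathcal{C}(X,D)=0$ gives a preimage $h:X\to Y$ of $f$ under $g\circ-$, whence $(f,g)\sim(h,1_Y)$. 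Injectivity uses the rotated triangle $D'[-1]\to Y\to Y'$ coming from a $g_3\in\Sigma$ annihilating a given $h$ in the quotient: $h$ must then factor through $D'[-1]\in\mathcal{D}$, forcing $h=0$.

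Applying the lemma to $\tilde M$ on the left and to $\tilde M[2]$ on the right yields natural isomorphisms
$$\Hom_{\overline{\mathcal{C}}}(M,-)\cong\Hom_\mathcal{C}(\tilde M,-),\qquad\Hom_{\overline{\mathcal{C}}}(-,M[2])\cong\Hom_\mathcal{C}(-,\tilde M[2])$$
after choosing any $\mathcal{C}$-lift of the argument; this is well defined since two lifts of the same object of $\overline{\mathcal{C}}$ differ (up to a zigzag in $\Sigma$) by $\mathcal{D}$-objects, which are annihilated by both Hom-functors. Composing with Serre duality $\Hom_\mathcal{C}(\tilde M,-)\cong D\Hom_\mathcal{C}(-,\tilde M[2])$ in $\mathcal{C}$ produces the desired functorial isomorphism on $\overline{\mathcal{C}}$. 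The main (mild) obstacle is the perpendicular-subcategory lemma above; once in hand, the rest is formal, and naturality on $\overline{\mathcal{C}}$ is automatic because both sides have been identified with $\mathcal{C}$-Hom spaces that descend through $\pi$.
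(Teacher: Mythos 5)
Your proposal is correct, but it takes a genuinely different route from the paper. You identify $\Hom_{\overline{\mathcal{C}}}(M,-)$ and $\Hom_{\overline{\mathcal{C}}}(-,M[2])$ with the corresponding Hom-functors of $\mathcal{C}=\mathcal{C}_{(S',M')}$, using that the lift $\tilde M$ of an ordinary arc lies in $\mathcal{D}'=\mathcal{D}^\perp={}^\perp\mathcal{D}$ (so that these functors invert $\Sigma$, descend to the quotient, and the canonical maps $\Hom_{\mathcal{C}}(\tilde M,Y)\to\Hom_{\overline{\mathcal{C}}}(M,Y)$ and $\Hom_{\mathcal{C}}(Y,\tilde M[2])\to\Hom_{\overline{\mathcal{C}}}(Y,M[2])$ are bijective), and then you simply transport Serre duality from $\mathcal{C}$. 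The paper instead takes the almost split triangle $\mathbb{S}M[-1]\to E\to M\xrightarrow{h}\mathbb{S}M$ in $\mathcal{C}$, checks that $\pi$ sends it to an almost split triangle in $\overline{\mathcal{C}}$ (using that $\Sigma$-morphisms out of an ordinary-arc object are isomorphisms already in $\mathcal{C}$), and invokes Reiten--Van den Bergh to convert the existence of that almost split triangle into the Serre duality formula. Your argument is the standard Bousfield-type localization argument; it makes the full functoriality in the second variable completely transparent, since it is inherited directly from the Serre functor of $\mathcal{C}$, and your perpendicularity lemma is a reusable general fact. The paper's argument has the side benefit of explicitly producing almost split triangles in $\overline{\mathcal{C}}$ ending at ordinary-arc objects. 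One small point to make explicit in your write-up: for the $\mathcal{D}^\perp$ half of your key lemma the paper's localization uses left fractions, so the argument is not a literal mirror image --- there you should observe that any $g\in\Sigma$ with source in $\mathcal{D}^\perp$ is a split monomorphism (apply $\Hom_{\mathcal{C}}(-,X)$ to the triangle $X\xrightarrow{g}Z\to D$), which lets you replace a fraction $(f,g)$ by $(rf,1)$ with $r$ a retraction of $g$; this is routine but worth saying.
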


\begin{proof}
We have a non-split exact triangle
$$\eta: \quad \mathbb{S}M[-1] \stackrel{f}{\to} E \stackrel{g}{\to} M \stackrel{h}{\to} \mathbb{S}M$$
in $\mathcal{C}$ where any non-retraction $g': Z \to M$ is such that $hg'=0$ and $\mathbb{S}M$ is indecomposable. If $\pi(h)=0$, then there is a direct summand $E'$ of $E$ such that $\pi$ sends $E' \to M$ to an isomorphism. That means that the arc of $E'$ is equivalent to that of $M$. Since $M$ corresponds to an ordinary arc, that means that $E'$ is isomorphic to $M$ in $\mathcal{C}$ and that $g$ is a retraction in $\mathcal{C}$, a contradiction. In order to prove the statement, we need to prove that $\pi(\eta)$ remains an almost split exact triangle in $\overline{\mathcal{C}}$. We know that $\pi(h) \ne 0$ and that $\overline{\mathbb{S}}(\pi(M))= \pi(\overline{\mathbb{S}}(M))$ and $\pi(M)$ are indecomposable. Let $Z$ be indecomposable in $\overline{\mathcal{C}}$ and in $\mathcal{C}$ and consider $g: Z \to \pi(M)$ be a non-isomorphism. Consider a left fraction $(g_1, g_2)$ corresponding to $g$ where $g_2: M \to M'$ is in $\Sigma$. As usual, we may assume that $M'$ is indecomposable and that $M,M'$ correspond to similar arcs. But since $M$ corresponds to an ordinary arc, $g_2$ is an isomorphism in $\mathcal{C}$ and therefore can be assumed to be the identity morphism. Therefore, $g_1$ is not an isomorphism and hence not a retraction. Thus, $hg_1=0$ so $\pi(h)\pi(g_1)=\pi(h)g=0$, which proves that $\pi(\eta)$ is an almost split exact triangle. The statement follows from this (see Proposition I.2.3 in \cite{RVdB}).
\end{proof}

\begin{lemma} \label{Lemma_finitely_dim} Let $f: M \to N$ be a morphism in ${\rm mod}^{\rm fp}\mathcal{T}$ and let $V$ be a finitely presented subrepresentation of $M$ of dimension vector $g$. Then there are finitely many possible dimension vectors $[f(V')]$ for $V' \in {\rm Gr}_gM$.
\end{lemma}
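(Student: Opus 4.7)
The plan is to show that, uniformly in $V' \in \mathrm{Gr}_gM$, the dimension vector of $f(V')$ takes at most finitely many values on all but finitely many vertices of $\mathcal{T}_0$, and is bounded everywhere by $[N]$. Since $f(V')$ is a subrepresentation of $N$, the inequality $[f(V')](x) \le [N](x)$ is automatic for every $x \in \mathcal{T}_0$, so the real task is to cut down the potentially infinite set of ``free'' vertex-values to a finite one.

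First I would invoke Lemma \ref{lemmafinitelypresented}(1) to conclude that every $V' \in \mathrm{Gr}_gM$ is finitely presented, and then extract from the proof of that lemma the key uniformity fact: $V'(s)$ is an isomorphism for every $s$ in the co-finite subset $S_0 := \mathrm{supp}_S(M) \cap \mathrm{supp}_S(V) \cap \mathrm{supp}_S(N)$ of $S$. The crucial point is that $S_0$ depends on $M$, $V$, $N$, $S$ but \emph{not} on the choice of $V'$.

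Next I would show that $f(V')(s)$ is also an isomorphism for every $s \colon x \to y$ in $S_0$, uniformly in $V'$. Injectivity is immediate since $f(V')(s)$ is the restriction of the injective map $N(s)$. For surjectivity, $V'(s)$ being an isomorphism gives $V'(x) = M(s)(V'(y))$, and naturality of $f$ then yields
$$f(V')(x) = f_x(M(s)(V'(y))) = N(s)(f_y(V'(y))) = N(s)(f(V')(y)),$$
which is exactly the image of $f(V')(s)$. I expect this step to be the main obstacle, since it is where the subrepresentation hypothesis and the naturality of $f$ must be combined in a way that is uniform in $V'$; without such uniformity, the quasi-boundedness toolkit cannot be applied simultaneously to all members of $\mathrm{Gr}_gM$.

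Finally I would combine this with the quasi-boundedness of $\mathcal{T}$. The quiver $\mathcal{G}(\mathcal{T}, S_0)$, obtained from $\mathcal{G}(\mathcal{T}, S)$ by deleting the finitely many edges in $S \setminus S_0$, remains finitely co-generated: adjoining the sources of the deleted edges to the original co-generators yields a finite co-generating set $\{w_1, \ldots, w_t\}$ of $\mathcal{G}(\mathcal{T}, S_0)$. Since $f(V')(s)$ is an isomorphism for every $s \in S_0$, the function $v \mapsto [f(V')](v)$ is constant along paths in $\mathcal{G}(\mathcal{T}, S_0)$, so its restriction to $V$ is determined by its values on $\{w_1, \ldots, w_t\}$, each of which lies in the finite set $\{0, 1, \ldots, [N](w_i)\}$. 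Outside $V$, the support of $f(V')$ is contained in the finite support of $N$, and is likewise bounded by $[N]$. Hence $[f(V')]$ ranges over a finite set, completing the argument.
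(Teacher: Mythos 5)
Your proposal is correct and follows essentially the same route as the paper: both arguments transfer the $S$-uniformity of $M$, $N$, $V$ to every $V' \in \mathrm{Gr}_gM$ and then to $f(V')$ on a co-finite subset of $S$, and conclude by finite co-generation of $\mathcal{G}(\mathcal{T},S)$ together with the bound $[f(V')]\le[N]$ and the finite support outside the quiver. The only difference is that you make explicit, via naturality of $f$, the ``elementary considerations'' the paper leaves implicit when asserting that $\dim_k f(V')$ is constant along the good edges.
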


\begin{proof}
Consider the quiver $\mathcal{G}(\mathcal{T}, S)$, which contains finitely many connected components, since it is finitely co-generated. Since $M,N,V$ are all $S$-uniform, we can partition the morphisms of a co-finite subset $S'$ of $S$ into finitely many subsets $S_1,\ldots, S_r$ in such a way that for each $i$, the morphisms in $S_i$ define a connected subquiver of $\mathcal{G}(\mathcal{T}, S)$ that are all $X$-isomorphisms for $X \in \{M,N,V\}$. Now, as we have already argued, this also implies that the latter property also holds for any $X=V'\in {\rm Gr}_gM$. Elementary considerations show that ${\rm dim}_k f(V')$ is constant on a given $S_i$. This implies the result.
\end{proof}

The following, known as the \emph{exchange formula}, is the last property needed for $X^{\mathcal{T}}$ to be a cluster character. The proof follows the same ideas as the proof of the exchange formula in \cite{P}. The difficulties, that we address below, can be summarized as follow. We need to make sure that the varieties defined in \cite{P} can also be interpreted as varieties in our setting, even for infinite dimensional representations with infinite dimensional subrepresentations. Also, we need to make sure that whenever the $2$-Calabi-Yau property is needed, then we are in the setting of Lemma \ref{locallyCY}. 

\begin{theorem}\label{ExchangePropertyCY} Let $M,N \in \overline{\mathcal{C}}$ where $M,N$ involve only ordinary arcs as direct summands, and such that $\Hom_{\overline{\mathcal{C}}}(M,N[1])$ is one dimensional. Let
$$M \to B_1 \to N \to M[1]$$
and
$$N \to B_2 \to M \to N[1]$$
be the corresponding non-split exact triangles. Then $$X^{\mathcal{T}}(M \oplus N) = X^{\mathcal{T}}(B_1) + X^{\mathcal{T}}(B_2).$$
\end{theorem}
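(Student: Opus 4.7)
The plan is to mimic the proof of the exchange formula given by Palu in \cite{P}, while taking care of the two issues flagged just before the statement: the representations involved may be infinite dimensional with infinite dimensional subrepresentations, and $\overline{\mathcal{C}}$ does not admit a global Serre functor. The first issue is handled by Lemma \ref{lemmafinitelypresented} and Lemma \ref{LemmaGrass}, which let us compute each Euler characteristic $\chi(\mathrm{Gr}_g F)$ inside a finite-dimensional restriction $\psi(F)$ over an honest finite-dimensional algebra; the second is handled by Lemma \ref{locallyCY}, which asserts a local $2$-Calabi--Yau property precisely at objects corresponding to ordinary arcs. The hypothesis that $M$ and $N$ involve only ordinary summands is exactly what makes that lemma applicable to the pairs $(M,N)$, $(M,B_i)$ and $(N,B_i)$ used throughout.

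First, I would separate the monomial part from the Euler-characteristic part. Applying $\varphi$ to the two exchange triangles produces, by Theorem~\ref{EquivCategories}, four-term exact sequences in $\mathrm{mod}^{\rm fp}\mathcal{T}$ whose outer terms sit in the images of $M$ and $N$. Using additivity of $\mathrm{coind}$ and $\mathrm{ind}$ under triangles, together with the fact that the connecting morphisms factor through $\mathcal{T}[1]$ (same argument as in Proposition 2.2 of \cite{P}), the Laurent monomial prefactors $X^{-\mathrm{coind}(\varphi B_1)}$ and $X^{-\mathrm{coind}(\varphi B_2)}$ combine with the indices appearing in the definition of $X^\mathcal{T}$ to reduce the identity to a pointwise identity on Euler characteristics
\[\chi(\mathrm{Gr}_g \varphi(M\oplus N)) \;=\; \chi(\mathrm{Gr}_g \varphi B_1) \;+\; \chi(\mathrm{Gr}_g \varphi B_2)\]
for each finitely presented dimension vector $g$, up to a shift in the indexing of $g$ induced by the indices of the extra summands appearing in $B_1$ and $B_2$.

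Next I would fix such a $g$ and choose a common witness object $M(F,g)$ in $\mathcal{T}$ large enough to see simultaneously the Grassmannians attached to $M\oplus N$, $B_1$ and $B_2$; Lemma~\ref{LemmaGrass} ensures this is harmless. Writing $F_M=\varphi(M)$, $F_N=\varphi(N)$, the left-hand side decomposes as $\sum_{g=g_1+g_2}\chi(\mathrm{Gr}_{g_1}F_M)\chi(\mathrm{Gr}_{g_2}F_N)$, as in the multiplication formula. To reach the right-hand side, I would stratify $\mathrm{Gr}_{g_1}F_M\times \mathrm{Gr}_{g_2}F_N$ according to whether the non-zero extension $\varepsilon\in\mathrm{Hom}(M,N[1])$, equivalently the connecting morphism $\varphi N\to \varphi M[1]$, restricts to zero or not on a chosen pair $(V_1,V_2)$ of subrepresentations. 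This is precisely Palu's decomposition: the zero-locus contributes to $\chi(\mathrm{Gr}_g\varphi B_1)$ via the pushout along $\varphi B_1$, and the complementary open contributes to $\chi(\mathrm{Gr}_g\varphi B_2)$, giving the desired additivity after applying the Euler characteristic (which is additive on constructible partitions). The identification of these loci with the relevant Grassmannians uses the $2$-Calabi--Yau-like duality $\mathrm{Hom}(V_2,V_1[1])\cong D\mathrm{Hom}(V_1,V_2)$ provided by Lemma~\ref{locallyCY}, and here it is crucial that $M,N$ are ordinary so the lemma applies.

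The main obstacle, and the place where the present setting genuinely differs from \cite{P}, is to make the stratification above set-theoretically and geometrically meaningful when $V_1$ and $V_2$ may be infinite dimensional. For this I would invoke Lemma~\ref{Lemma_finitely_dim}: the image of the restricted connecting map on $V_1$, and dually the kernel cutting out $V_2$, takes only finitely many dimension vectors as $(V_1,V_2)$ ranges over $\mathrm{Gr}_{g_1}F_M\times\mathrm{Gr}_{g_2}F_N$. Passing through the restriction functor $\psi(Q,g)$ of Lemma~\ref{LemmaGrass} converts the whole discussion into the classical situation of Grassmannians of finite-dimensional modules over a finite-dimensional algebra, where the stratification is genuinely constructible and its Euler characteristic behaves as expected. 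With this reduction in hand, the remaining computations are formally identical to those in \cite{P}, and summing over $g=g_1+g_2$ yields the exchange formula.
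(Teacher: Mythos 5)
Your overall strategy coincides with the paper's: adapt Palu's proof from \cite{P}, using Lemma \ref{lemmafinitelypresented} and the abelianness of $\mmod^{\rm fp}\mathcal{T}$ to keep every module in sight finitely presented, Lemma \ref{locallyCY} to supply the $2$-Calabi--Yau duality exactly where Palu's Lemma 4.3 needs it (this is indeed where the hypothesis that $M,N$ are ordinary enters), Lemma \ref{Lemma_finitely_dim} to control the dimension vectors occurring in the stratification, and Lemma \ref{LemmaGrass} to compute Euler characteristics over a finite-dimensional algebra. These are precisely the new ingredients the paper adds to Palu's argument.

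The one step that does not survive scrutiny is your claimed reduction to a ``pointwise identity'' $\chi({\rm Gr}_g\varphi(M\oplus N)) = \chi({\rm Gr}_g\varphi(B_1)) + \chi({\rm Gr}_g\varphi(B_2))$ for each $g$, even ``up to a shift in the indexing''. No dimension-vector-wise matching of this kind exists: a submodule $E\subseteq\varphi(B_1)$ with $[\varphi(i)^{-1}E]=e$ and $[\varphi(p)E]=f$ has dimension vector $[\varphi(i)\varphi(i)^{-1}E]+f$, which is in general strictly smaller than $e+f$ and takes several values (finitely many, by Lemma \ref{Lemma_finitely_dim}) as $E$ varies over submodules with the same $(e,f)$; conversely a fixed $g$ receives contributions from several pairs $(e,f)$. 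The correct bookkeeping---the paper's---fixes $(e,f)$, writes ${\rm Gr}_g\varphi(B_1)=\bigsqcup_{e,f}X_{e,f}^g$ and ${\rm Gr}_g\varphi(B_2)=\bigsqcup_{e,f}Y_{e,f}^g$, and uses the surjection $X_{e,f}\sqcup Y_{e,f}\to {\rm Gr}_e\varphi(M)\times{\rm Gr}_f\varphi(N)$ with affine fibers, each lying in only one of the two pieces, to obtain $\chi({\rm Gr}_e\varphi(M))\chi({\rm Gr}_f\varphi(N))=\sum_g\bigl(\chi(X_{e,f}^g)+\chi(Y_{e,f}^g)\bigr)$. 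The resulting sums recombine into $X^{\mathcal{T}}(B_1)+X^{\mathcal{T}}(B_2)$ only because of the identity ${\rm coind}(g)-{\rm ind}(g)-{\rm coind}(B_i)={\rm coind}(e)-{\rm ind}(e)+{\rm coind}(f)-{\rm ind}(f)-{\rm coind}(M)-{\rm coind}(N)$ (the analogue of Lemma 5.1 of \cite{P}), which guarantees that every $g$ arising from a given $(e,f)$ carries the same Laurent monomial. Your proposal never invokes this identity, and it is exactly the ingredient that replaces your vague ``shift in the indexing''; with that repair, the rest of your argument goes through as in the paper.
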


\begin{proof}
Using the notations as in Lemma 4.2 of \cite{P}, consider an exact triangle
$$M \stackrel{i}{\to} B \stackrel{p}{\to} L \stackrel{\epsilon}{\to} M[1]$$
with morphisms $i_U: U \to M$ and $i_V: V \to L$ such that $\varphi(i_U)$ and $\varphi(i_V)$ are monomorphisms. We know that $\varphi(U), \varphi(V)$ are finitely presented. In the proof of Lemma 4.2 of \cite{P}, a submodule $E$ of $\varphi(B)$ is constructed as the image of a morphism in $\mmod^{\rm fp}\mathcal{T}$. Since the latter is abelian, we know that $E$ is finitely presented. Therefore, all modules involved in that proof are finitely presented. Therefore, Lemma 4.2 of \cite{P} is valid in our setting (by requiring that $E$ be finitely presented in their condition (i)).

The argument in the proof of Lemma 4.3 of \cite{P} extends in our setting, provided we make sure that his argument using the $2$-Calabi-Yau property can be adapted. It follows from Lemma \ref{locallyCY} that for $Z$ an object in $\overline{\mathcal{C}}$ corresponding to an ordinary arc, we have a functorial isomorphism
$$\Hom_{\overline{\mathcal{C}}}(Z,-) \cong D\Hom_{\overline{\mathcal{C}}}(-,\overline{\mathbb{S}}Z) \cong D\Hom_{\overline{\mathcal{C}}}(-,Z[2]),$$
and hence showing that the $2$-Calabi-Yau property
$$\Hom_{\overline{\mathcal{C}}}(Z,Z'[1]) \cong D\Hom_{\overline{\mathcal{C}}}(Z',Z[1])$$
holds whenever one of the objects $Z,Z'$ has only ordinary arcs as direct summands. Thanks to this, the full argument of Lemma 4.3 of \cite{P} works in our setting.

Let us fix two finitely presented dimension vectors $e$ and $f$ where $e$ is finitely presented in $\varphi(M)$ and $f$ is finitely presented in $\varphi(N)$. Let
$$X_{e,f} = \{E \subseteq \varphi(B_1) \mid [\varphi(i)^{-1}E]=e, \; [\varphi(p)E]=f\}$$
where $i,p$ denote the morphisms in our non-split exact triangle
$$M \stackrel{i}{\to} B_1  \stackrel{p}{\to} N \to M[1].$$
Since $e$ is finitely presented in $\varphi(M)$, it follows that  $\varphi(i)^{-1}(E)$ is finitely presented.  From Lemma \ref{Lemma_finitely_dim}, there are finitely many possible dimension vectors for $\varphi(i)\varphi(i)^{-1}(E)$ and such submodules of $\varphi(B_1)$ are finitely presented since they are images of finitely presented subrepresentations of $\varphi(M)$. Now, $E$ is an extension of $\varphi(i)\varphi(i)^{-1}(E)$ by a finitely presented subrepresentation of $\varphi(N)$ of dimension vector $f$. Therefore, $E$ is finitely presented and there are finitely many possible dimension vectors for $E$. For a finitely presented dimension vector $g$ in $\varphi(B_1)$, we let
$$X_{e,f}^g:={\rm Gr}_g\varphi(B_1)\cap X_{e,f}$$
We know that there are finitely many finitely presented $g$ such that $X_{e,f}^g$ is non-empty and therefore, $X_{e,f}$ is a projective variety identified with a finite union of subvarieties of Grassmannian varieties. We observe that
$${\rm Gr}_g\varphi(B_1) = \bigsqcup_{e,f}X_{e,f}^g$$
where the disjoint union runs over all pairs of finitely presented dimension vectors in $\varphi(M)$ and $\varphi(N)$, respectively. There are finitely many members of that union that are non-empty. Similarly, we define
$$Y_{e,f} = \{E \subseteq \varphi(B_2) \mid [\varphi(i')^{-1}E]=e, \; [\varphi(p')E]=f\}$$
where $i',p'$ denote the morphisms in our non-split exact triangle
$$N \stackrel{i'}{\to} B_2  \stackrel{p'}{\to} M \to N[1]$$
and for a finitely presented dimension vector $g$ in $\varphi(B_2)$, we let
$$Y_{e,f}^g:={\rm Gr}_g\varphi(B_2)\cap Y_{e,f}.$$
Now, we also have that $Y_{e,f}$ is a projective variety identified with a finite union of subvarieties of Grassmannian varieties. We also observe that
$${\rm Gr}_g\varphi(B_2) = \bigsqcup_{e,f}Y_{e,f}^g$$
where the disjoint union runs over all pairs of finitely presented dimension vectors in $\varphi(N)$ and $\varphi(M)$, respectively. For $e$ finitely presented in $\varphi(M)$ and $f$ finitely presented in $\varphi(N)$, we consider the map
$$X_{e,f} \sqcup Y_{e,f} \to {\rm Gr}_e\varphi(M) \times {\rm Gr}_f\varphi(N)$$
sending $E \in X_{e,f}$ to $(\varphi(i)^{-1}E, \varphi(p)E)$ and sending $E' \in Y_{e,f}$ to $( \varphi(p')E', \varphi(i')^{-1}E')$. By the analogue of Proposition 4.3 in \cite{P}, we know that this map is surjective and that any fiber lies in at most one of $X_{e,f}, Y_{e,f}$. Moreover, these fibers are affine spaces. Therefore, we get
\begin{eqnarray*}
\chi({\rm Gr}_e\varphi(M) \times {\rm Gr}_f\varphi(N)) & = & \chi(X_{e,f} \sqcup Y_{e,f})\\
& = & \chi(X_{e,f}) + \chi(Y_{e,f})\\
& = & \sum_g(\chi(X_{e,f}^g) + \chi(Y_{e,f}^g))
\end{eqnarray*}
where the last sum runs over all dimension vectors $g$. Note that for $X_{e,f}^g$ to be non-empty, one needs $g$ to be finitely presented in $\varphi(B_1)$. Similarly, for $Y_{e,f}^g$ to be non-empty, one needs $g$ to be finitely presented in $\varphi(B_2)$. So the last sum is really a finite sum.  Note that using similar arguments as in the proof of Lemma 5.1 in \cite{P}, we get
$${\rm coind}(g)-{\rm ind}(g) - {\rm coind}B_1 = {\rm coind}(e)-{\rm ind}(e) + {\rm coind}(f)-{\rm ind}(f) - {\rm coind}M - {\rm coind}N$$
and similarly, 
$${\rm coind}(g)-{\rm ind}(g) - {\rm coind}B_2 = {\rm coind}(e)-{\rm ind}(e) + {\rm coind}(f)-{\rm ind}(f) - {\rm coind}M - {\rm coind}N$$
Now, the product
$X^{\mathcal{T}}(M)X^{\mathcal{T}}(N)$ gives $$X^{-{\rm coind}(\varphi(M))-{\rm coind}(\varphi(N))}\sum_{e,f} \chi({\rm Gr}_e(\varphi(M)))\chi({\rm Gr}_f(\varphi(N)))X^{{\rm coind}(e) - {\rm ind}(e)+{\rm coind}(f) - {\rm ind}(f)}$$
which in turns gives
$$X^{-{\rm coind}(\varphi(M))-{\rm coind}(\varphi(N))}\sum_{e,f,g} (\chi(X_{e,f}^g) + \chi(Y_{e,f}^g))X^{{\rm coind}(e) - {\rm ind}(e)+{\rm coind}(f) - {\rm ind}(f)}.$$
Now, thanks to the above equalities on indices and coindices, the last sum splits as the sum of
$$ X^{-{\rm coind}(B_1)}\sum_{e,f,g} \chi(X_{e,f}^g)X^{{\rm coind}(g) - {\rm ind}(g)}$$ 
with
$$ X^{-{\rm coind}(B_2)}\sum_{e,f,g} \chi(Y_{e,f}^g)X^{{\rm coind}(g) - {\rm ind}(g)},$$ 
which gives the wanted result, since $\sum_{e,f}\chi(X_{e,f}^g) = \chi({\rm Gr}_g\varphi(B_1))$ and $\sum_{e,f}\chi(Y_{e,f}^g) = \chi({\rm Gr}_g\varphi(B_2))$.
\end{proof}

Now, we look at some examples. In particular, Example~\ref{fail-limit-arc} below shows that the condition that $M,N$ should correspond to ordinary arcs in Theorem~\ref{ExchangePropertyCY} cannot be omitted.

\begin{example}~\label{example-fountain}
In this example, we will look at the cluster character formula if we consider a cluster-tilting object $\mathcal{T}$ as shown in Figure~\ref{fig:exf}. 

\begin{figure}[H]
    \centering
    \includegraphics[scale=0.6]{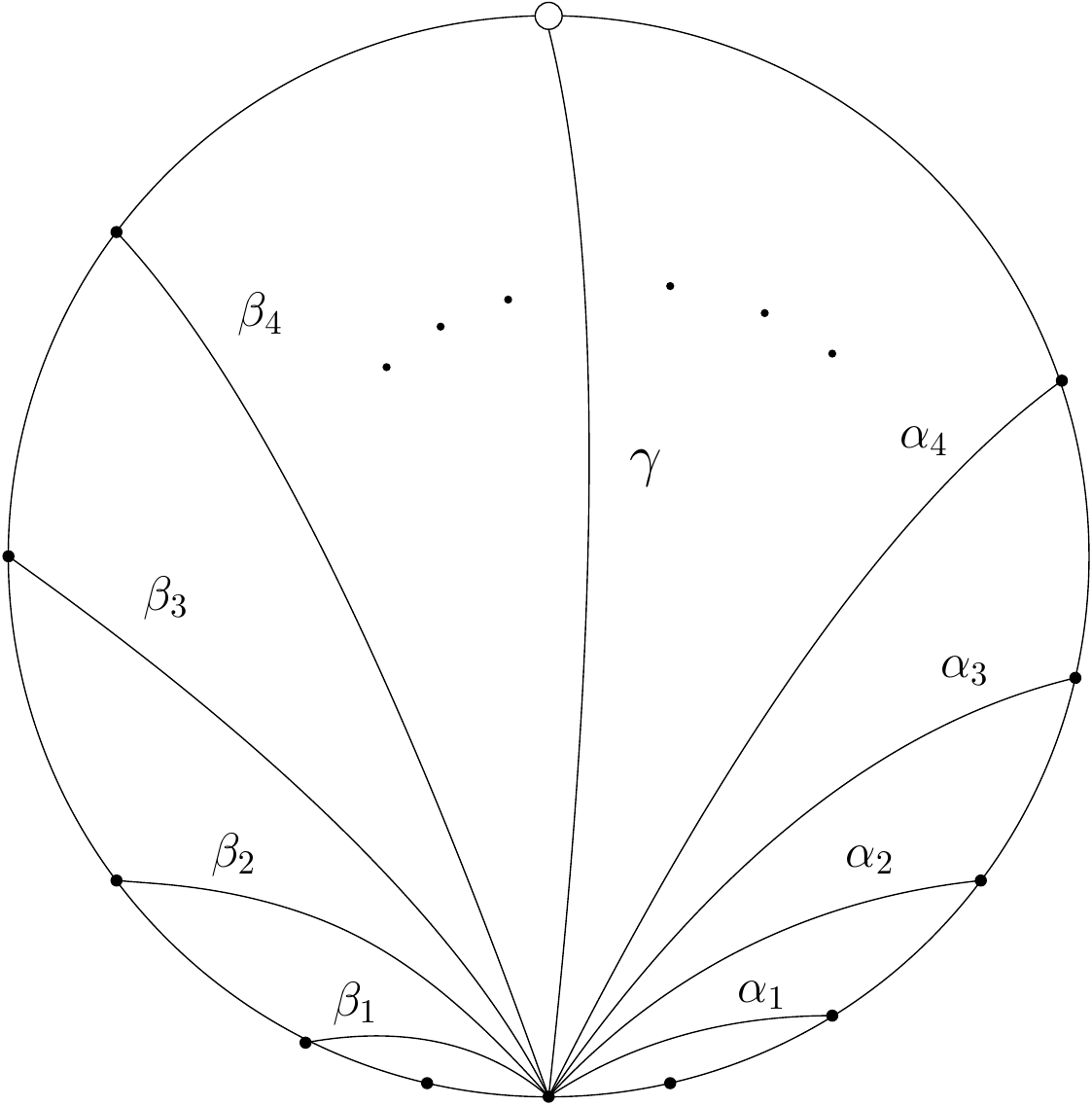}
    \caption{An illustration of a cluster-tilting object $\mathcal{T}$.}
    \label{fig:exf}
\end{figure}

Because of the equivalence in Theorem~\ref{EquivCategories}, we will think of an object in $\mathcal{\overline{\mathcal{C}}}/{\mathcal{T}[1]}$ as a representation over the category with the following quiver  $Q$
\[\xymatrix{1 & 2 \ar[l] & 3 \ar[l] &  \ar[l] \quad \cdots \quad \mathbf{z} \quad \cdots \quad & 4' \ar[l] & 3' \ar[l]  & 2' \ar[l]  & 1' \ar[l] }\]
where for each positive integer $i$, we have morphisms $i' \to z$ and $z \to i$ with the relations that all parallel morphisms are equal. In general, for an arc $\mu$, we denote by $M_\mu$ the indecomposable object corresponding to it. By an abuse of notation, we also denote the finitely presented module $\varphi(M_\mu)$ again by $M_\mu$. Consider the following projective representation at $z$.
\[M_{\gamma}=\xymatrix{k & k \ar[l] \cdots & k \ar[l] &  \ar[l] \cdots \quad \mathbf{k} \quad \cdots & 0 \ar[l] & \cdots 0 \ar[l]  & 0 \ar[l]}\]
where all maps between non-zero vector spaces are identity. Note that even though there is no arrow starting at $z$ in the quiver, the map $M_\gamma(z) \to M_\gamma(\alpha_i)$ is the identity for all $i$. 
In the cluster character formula, the sum runs over the finitely presented submodules. We consider all such submodules of $M_{\gamma}$ with the corresponding terms in the cluster character. We let $P_i = M_{\alpha_i}$, $P_{i'} = M_{\beta_i}$ and $P_z = M_\gamma$ for the corresponding indecomposable projective modules.

\begin{itemize}
\item We have the zero submodule which gives rise to the term $1$. 
\item Consider $M_{\alpha_1}$ with dimension vector $g=(1,0,\ldots,\mathbf{0},\ldots,0)$.
We have a triangle $0\rightarrow M_{\alpha_1}\rightarrow M_{\alpha_1} \rightarrow 0[1]$. Thus, ${\rm ind}(M_{\alpha_1})=[P_1]-[0]=[P_1]$.
Now, let us compute the ${\rm coind}(M_{\alpha_1})$. Identifying the objects or modules with their dimension vectors, we have a triangle \[(1,0,\ldots,\mathbf{0},\ldots,0)\rightarrow (1,1,\ldots,\mathbf{1},\ldots,1)\rightarrow (0,1,\ldots,\mathbf{1},\ldots,1)\rightarrow (1,0,\ldots,\mathbf{0},\ldots,0)[1]\] which is \[M_{\alpha_1} \rightarrow M_{\alpha_1}[2] \rightarrow M_{\alpha_2}[2] \rightarrow M_{\alpha_1}[1]\]
See Figure~\ref{fig:cc1} for the corresponding arc of $M_{\alpha_2}[2].$

\begin{figure}[H]
    \centering
    \includegraphics[scale=0.47]{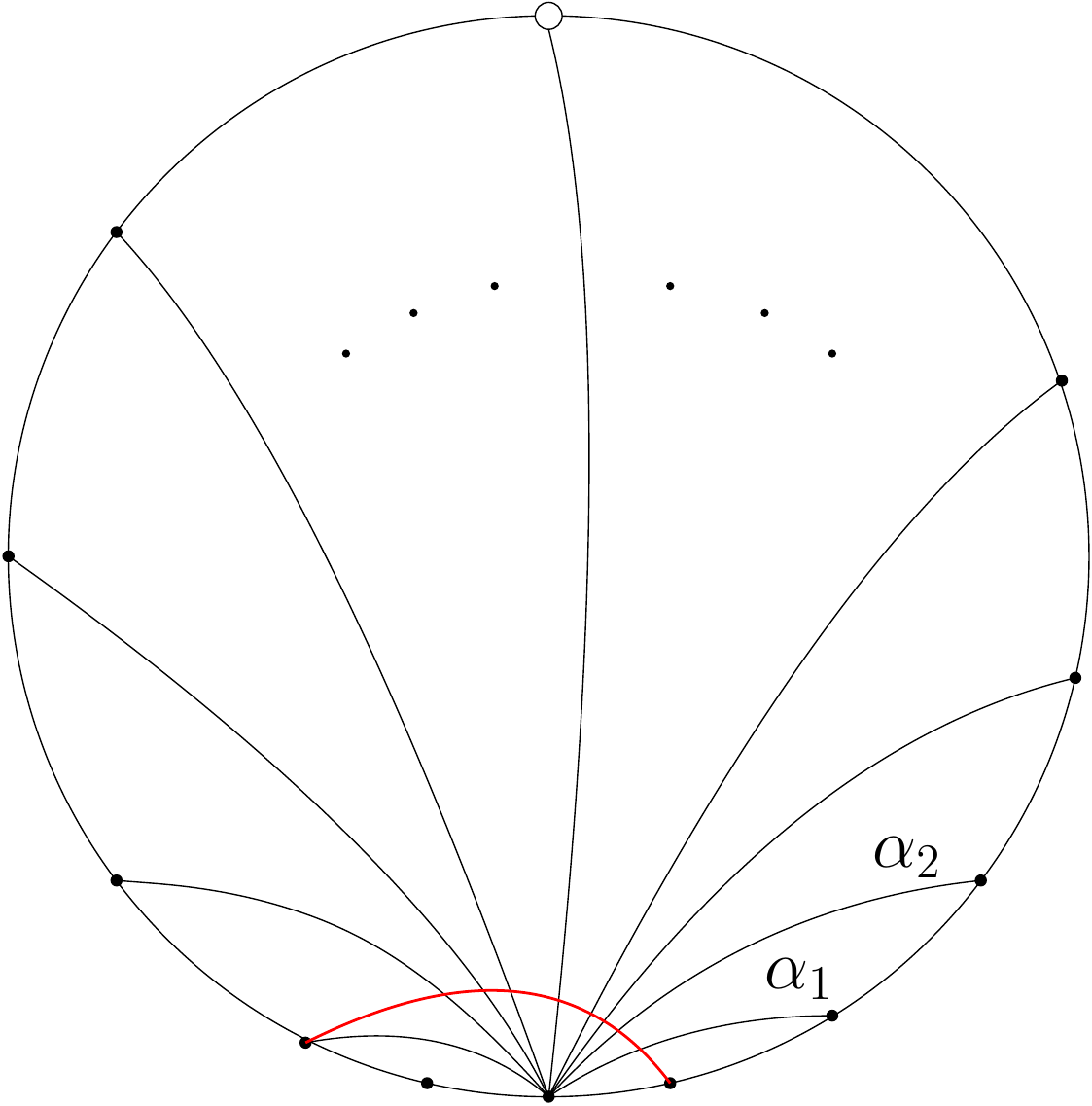}
    \caption{An illustration of $M_{\alpha_2}[2].$}
    \label{fig:cc1}
\end{figure}
We have ${\rm coind}(M_{\alpha_1}) = [P_1]-[P_2]$ and then $({\rm coind-ind})(M_{\alpha_1})=[P_1]-[P_2]-[P_1]$. This gives rise to $$x^{({\rm coind-ind})(M_{\alpha_1})}=x_1x_1^{-1}x_2^{-1}=x_2^{-1}.$$

\item Consider $M_{\alpha_i}$ with dimension vector $g=(1,\ldots,1,0,\ldots,\mathbf{0},\ldots,0)$. 
We have the triangle $0\rightarrow M_{\alpha_i}\rightarrow M_{\alpha_i} \rightarrow 0[1]$. So, ${\rm ind}(M_{\alpha_i})=[P_i]-[0]=[P_i]$.
Now, let us compute the ${\rm coind}(M_{\alpha_i})$. We have a triangle 
$$M_{\alpha_i} \rightarrow M_{\alpha_1}[2] \rightarrow M_{\alpha_{i+1}}[2] \rightarrow M_{\alpha_i}[1]$$
So, ${\rm coind}(M_{\alpha_i}) = [P_1]-[P_{i+1}]$ and $({\rm coind-ind})(M_{\alpha_i})=[P_1]-[P_{i+1}]-[P_i]$. Thus, we have $$x^{({\rm coind-ind})(M_{\alpha_i})}=x_1x_i^{-1}x_{i+1}^{-1}.$$

\item Consider $M_\gamma$ with dimension vector $g=(1,1,\ldots,\mathbf{1},\ldots,0)$. 
We have the triangle $0\rightarrow M_{\gamma}\rightarrow M_{\gamma} \rightarrow 0[1]$ which tells us ${\rm ind}(M_{\gamma})=[P_z]-[0]=[P_z].$ We also have a triangle 
$$M_{\gamma} \rightarrow M_{\alpha_1}[2] \rightarrow M_{\gamma}[2] \rightarrow M_{\gamma}[1]$$ that gives rise to 
${\rm coind}(M_{\gamma}) = [P_1]-[P_z].$
Thus, we have $({\rm coind-ind})(M_{\gamma})=[P_1]-2[P_z]$, and $$x^{({\rm coind-ind})(M_{\gamma})}=x_1x_{z}^{-2}.$$
\end{itemize}

Note that a module with dimension vector $(1,1,\ldots,\mathbf{0},\ldots,0)$ is not finitely generated, thus it is not finitely presented. Indeed, if $Z$ denotes such a module, then for a vertex $i$ of the quiver, we see that $\Hom(P_i,Z)$ is non-zero if and only if $i \in \mathbb{N}$. Therefore, if $P$ is finitely generated projective with a morphism $P \to Z$, then for $p$ large enough, the morphism $P(p) \to Z(p)$ is trivial.

In this example, the Euler characteristic of any Grassmannian of an indecomposable module $M$ with dimension vector g is always $0$ or $1$ because there is at most one choice for the finitely presented subrepresentation of $M$ of dimension vector $g$, thus the Grassmannian is either empty or a point. Therefore,

\begin{align*}
X^{\mathcal{T}}(M_{\gamma})=&  x_1^{-1}x_z(1+\sum_{n=1}^{\infty}x_1x_n^{-1}x_{n+1}^{-1}+x_1x_{z}^{-2}) \\
=& x_1^{-1}x_z+\sum_{n=1}^{\infty}x_zx_n^{-1}x_{n+1}^{-1}+x_{z}^{-1}.
\end{align*} 

\end{example}

\begin{example} Assume we have a cluster-tilting object $\mathcal{T}$ as in Example~\ref{example-fountain}. In this example, we will look at the exchange formula for the following triangles. 

\[M_{\alpha_3} \rightarrow M_{\beta_1} \rightarrow M_{\eta} \rightarrow M_{\alpha_3}[1]\]
\[M_{\eta} \rightarrow M_{\alpha_2} \oplus M_{\zeta} \rightarrow M_{\alpha_3} \rightarrow M_{\eta}[1]\]
where the modules correspond to the arcs shown in the Figure~\ref{triangles}.
\begin{figure}[H]
    \centering
    \includegraphics[scale=0.47]{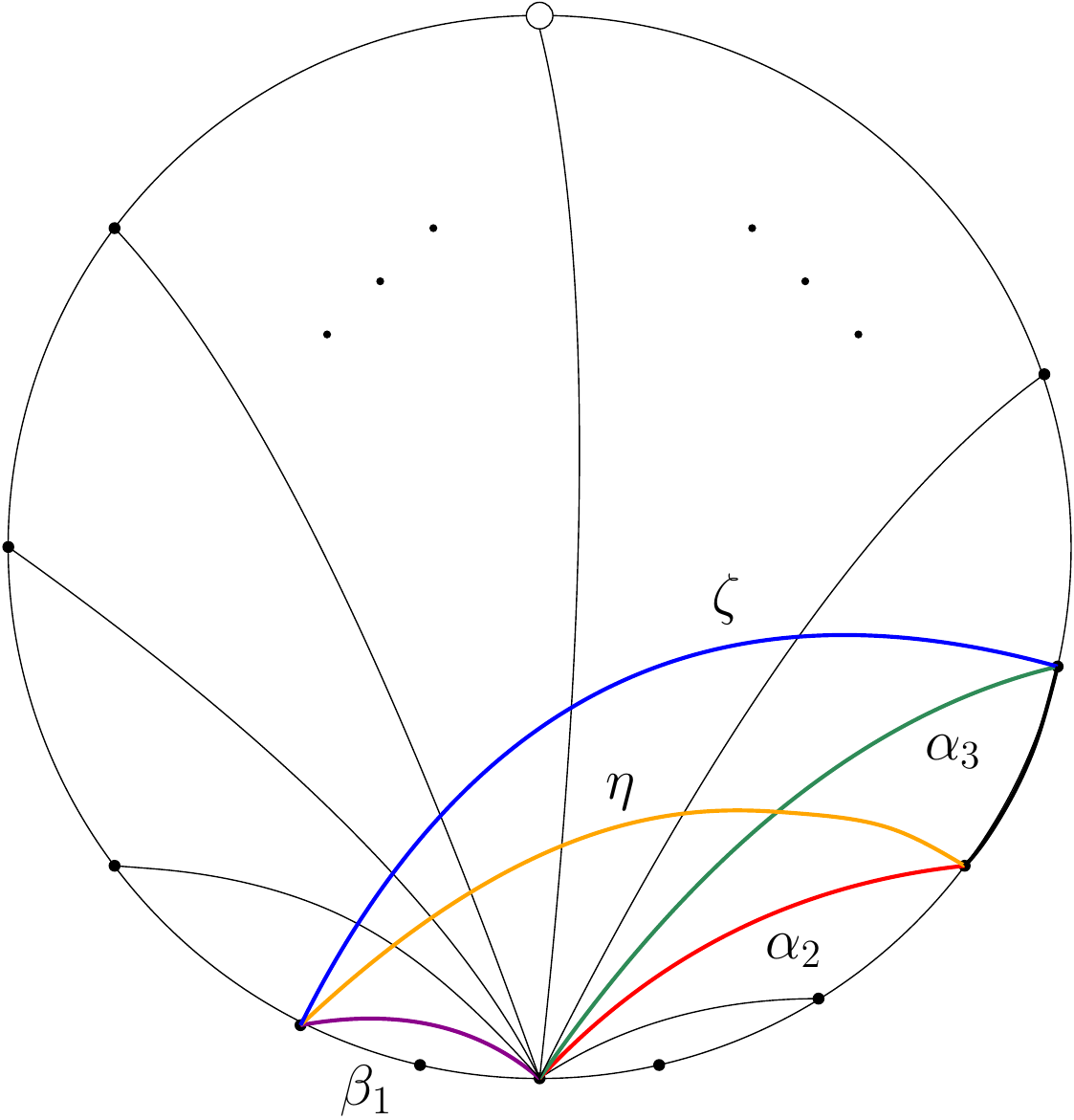}
    \caption{Arcs corresponding to objects in the triangles.}
    \label{triangles}
\end{figure}

We will illustrate and check that \[X^{\mathcal{T}}(M_{\eta})X^{\mathcal{T}}(M_{\alpha_3})=X^{\mathcal{T}}(M_{\alpha_2})X^{\mathcal{T}}(M_{\zeta})+X^{\mathcal{T}}(M_{\beta_1})\]

As in Example~\ref{example-fountain}, we compute 

\begin{align*}
X^{\mathcal{T}}(M_{\alpha_2})&=x_3\left(\frac{1}{x_1}+\frac{1}{x_1x_2}+\frac{1}{x_2x_3}\right)\\
X^{\mathcal{T}}(M_{\alpha_3})&=x_4\left(\frac{1}{x_1}+\frac{1}{x_1x_2}+\frac{1}{x_2x_3}+\frac{1}{x_3x_4}\right)
\end{align*}

For the computation of $X^{\mathcal{T}}(M_{\eta})$, we have three different type of finitely presented submodules (expressed with dimension vectors over the category with the quiver $Q$); 
\begin{enumerate}[(i)]
\item  $g=(0,0,0,1,1,\ldots,\mathbf{1},\ldots,1,1,\underbrace{0,\ldots,0}_{k-times})$ where $k$ is the number of zeros at the end of tuple and $k\geq 0$,\\
\item $g=(0,0,0,1,\ldots,\mathbf{1},\ldots,0)$,\\
\item $g=(0,0,0,\underbrace{1,\ldots,1}_{l-times},0,\ldots\mathbf{0},\ldots,0)$ where $l$ is the number of ones located after three zeros in the tuple and $l\geq 0$.
\end{enumerate}
For $(i)$, the corresponding terms are \[ \begin{cases}
    x_3x_4x_{1'}^{-1}      & \quad \text{if } k= 0\\
    x_3x_4x_{k'}^{-1}x_{(k+1)'}^{-1}  & \quad \text{otherwise}
  \end{cases}
\]
For $(ii)$, we get the term $x_3x_4x_{z}^{-2}$. 
Finally, in case $(iii)$, we get
\[ \begin{cases}
  1 & \quad \text{if } l= 0\\
x_3x_4x_{l+3}^{-1}x_{(l+4)}^{-1} & \quad \text{otherwise.}
  \end{cases}
\]

Since $x^{-{\rm coind}(M_{\eta})}= x_4^{-1}$,  we get 

\[X^{\mathcal{T}}(M_{\eta})=x_4^{-1}\left(1+\displaystyle \sum_{i=1}^{\infty} x_3x_4x_{i+3}^{-1}x_{(i+4)}^{-1} + x_3x_4x_{z}^{-2} + x_3x_4x_{1'}^{-1} +\displaystyle \sum_{i=1}^{\infty} x_3x_4x_{i'}^{-1}x_{(i+1)'}^{-1}\right)\]

The computation for $M_{\zeta}$ is almost identical to $M_{\eta}$. We have

\[X^{\mathcal{T}}(M_{\zeta})=x_5^{-1}\left(1+\displaystyle \sum_{i=1}^{\infty} x_4x_5x_{i+4}^{-1}x_{(i+5)}^{-1} + x_4x_5x_{z}^{-2} + x_4x_5x_{1'}^{-1} +\displaystyle \sum_{i=1}^{\infty} x_4x_5x	_{i'}^{-1}x_{(i+1)'}^{-1}\right)\]

By a similar computation, one gets 

\[X^{\mathcal{T}}(M_{\beta_1})=x_1^{-1}\left(1+\displaystyle \sum_{i=1}^{\infty} x_1x_{i}^{-1}x_{(i+1)}^{-1} + x_1x_{z}^{-2} + x_1x_{1'}^{-1} +\displaystyle \sum_{i=1}^{\infty} x_1x_{i'}^{-1}x_{(i+1)'}^{-1}\right)\]

After a careful calculation, it is straightforward to see that the exchange formula holds.
\end{example}

\begin{example}~\label{fail-limit-arc} In this example, we will show that the multiplication formula fails to hold for limit arcs. Consider the following configuration and the limit arc $\gamma$ drawn in red in Figure~\ref{gamma}.

\begin{figure}[H]
    \centering
    \includegraphics[scale=0.47]{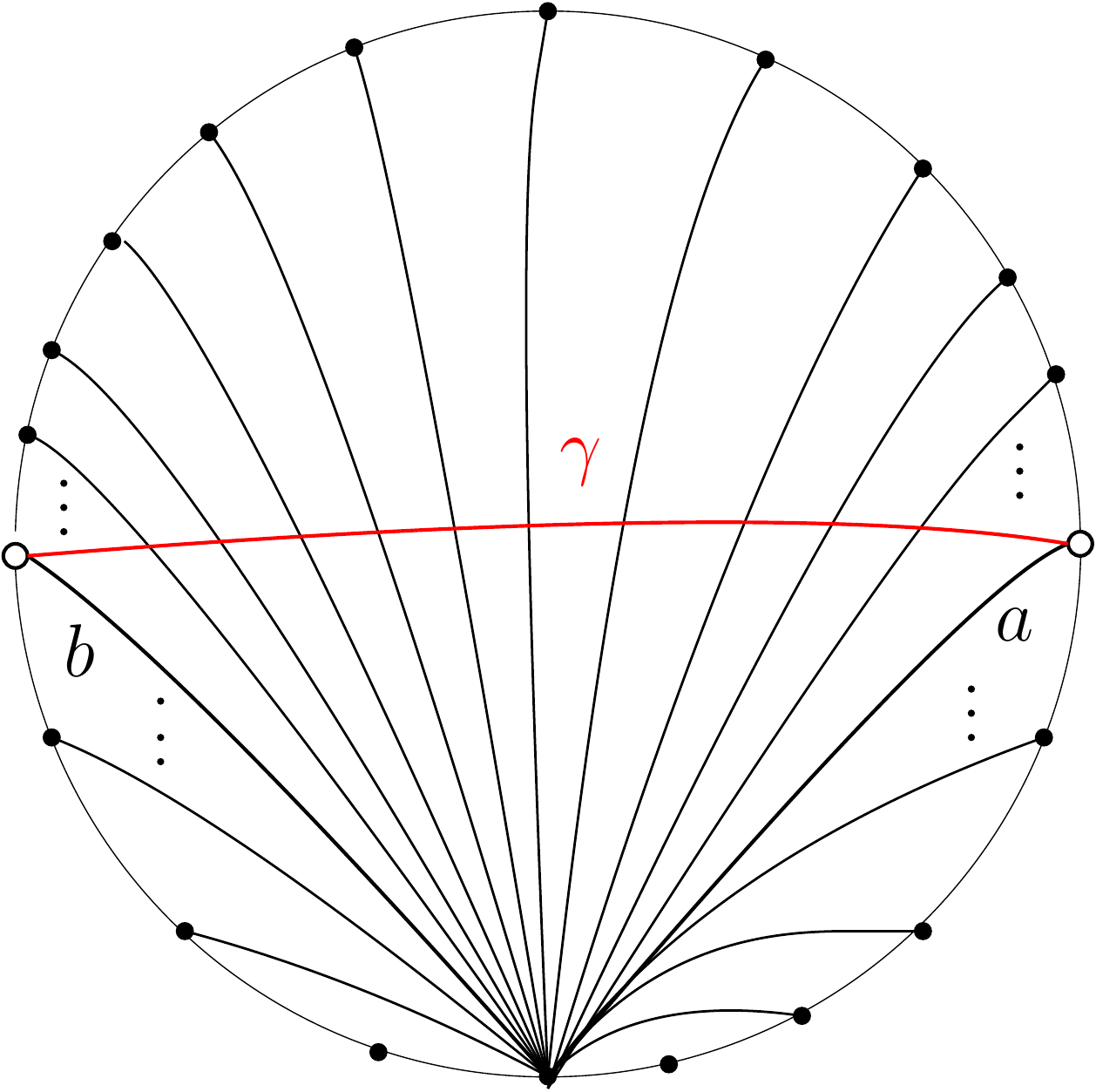}
    \caption{The limit arc $\gamma$ and an illustration of the underlying cluster-tilting object.}
    \label{gamma}
\end{figure}

As we did in Example~\ref{example-fountain}, we will think of objects in $\mathcal{\overline{\mathcal{C}}}/{\mathcal{T}[1]}$ as representations over the category whose quiver is 

$$\xymatrix{1 & 2 \ar[l] &  \ar[l]\quad \cdots {\bf a} \cdots \quad  &  \ar[l]\quad  \cdots {\bf b} \cdots \quad   & 2' \ar[l]  & 1' \ar[l]}$$

Recall that we identify the vertices of this quiver with the indecomposable objects of $\mathcal{T}$, up to isomorphism. There is a one-dimensional Hom-space from a given object to any other object on its left-hand side (by looking at the quiver). Let us consider the finitely presented module, denoted $M_\gamma$, with dimension vector 

$$\xymatrix{0 &  \ar[l]\quad \cdots {\bf 0} \cdots \quad  &   1 \ar[l] &  \ar[l] \quad  \cdots {\bf 1} \cdots \quad  & 0 \ar[l]  & 0 \ar[l]}$$ 
Let us consider the finitely presented subrepresentations of $M_{\gamma}$. Start with $M_{\gamma}$ itself. We have a triangle $P_a \rightarrow P_b \rightarrow M_{\gamma} \rightarrow P_a[1]$ where $P_a$, $P_b$ are the corresponding objects to arcs $a$ and $b$, respectively. So, the index of $M_{\gamma}$ is $[P_b]-[P_a]$. From the following triangle 
\[M_{\gamma} \rightarrow P_a[2] \rightarrow P_b[2]\rightarrow M_{\gamma}[1],\]
we compute the coindex as $[P_a]-[P_b]$. Thus, we get the term
$$x^{({\rm coind-ind})(M_{\gamma})}=x^2_ax_{b}^{-2}$$ where $x_a, x_b$ are the indeterminates corresponding to $P_a$ and $P_b$, respectively.

Consider the subrepresentation $M_k$ with dimension vector \[ (0,\cdots,{\bf 0},\cdots,1,\cdots,\underbrace{1}_{k\text{-th position}},0,\cdots,{\bf 0},\cdots,0)\] Similarly, one can compute the index as ${\rm ind}(M_k)=[P_k]-[P_a]$ and the coindex as ${\rm coind}(M_k)=[P_a]-[P_{k+1}].$ Therefore, we get the term
$$x^{({\rm coind-ind})(M_k)}=x^2_ax_{k}^{-1}x^{-1}_{k+1}.$$
Finally, the cluster character for $M_{\gamma}$ is
\[\left(1+x_a^2x_b^{-2}+\sum_{k=-\infty}^{\infty} x_a^2x_{k+1}^{-1}x_k^{-1}\right)x_a^{-1}x_b \]
Note that if the exchange formula were true for the non-split triangles
$$M_\gamma \to 0 \to M_\gamma \to M_\gamma[1] \; \text{(twice)},$$
then we would get $X^{\mathcal{T}}(M_\gamma)^2 = 2$. This  clearly shows that the exchange formula does not hold for the limit arcs.
\end{example}

\end{document}